\setlist[enumerate, 1]{$(1)$}
\newtheorem{theorem}{Theorem}[section]
\newtheorem{lemma}[theorem]{Lemma}
\newtheorem{proposition}[theorem]{Proposition}
\newtheorem{corollary}[theorem]{Corollary}
\theoremstyle{definition} 
\newtheorem{definition}[theorem]{Definition}
\newtheorem{definition-lemma}[theorem]{Definition-Lemma}
\newtheorem{example}[theorem]{Example}
\newtheorem{remark}[theorem]{Remark}
\numberwithin{equation}{section}
\newcommand{\C}{\mathbb{C}}
\newcommand{\R}{\mathbb{R}}
\newcommand{\Z}{\mathbb{Z}}
\newcommand{\Q}{\mathbb{Q}}
\newcommand{\mc}{\mathcal}
\def\P{\mathbb{P}}
\newcommand{\mf}{\mathfrak}
\DeclareMathOperator{\ord}{ord}
\DeclareMathOperator{\Mob}{Mob}
\DeclareMathOperator{\Fix}{Fix}
\DeclareMathOperator{\id}{id}
\DeclareMathOperator{\QM}{QM}
\DeclareMathOperator{\Val}{Val}
\newcommand{\pp}{P_{\sigma}}
\newcommand{\np}{N_{\sigma}}
\def\mult{\operatorname{mult}}
\def\Supp{\operatorname{Supp}}
\def\Exc{\operatorname{Exc}}
\def\NE{\operatorname{NE}}
\def\lct{\operatorname{lct}}
\def\pnklt{\operatorname{pNklt}}
\def\nklt{\operatorname{Nklt}}
\DeclarePairedDelimiterX{\inp}[2]{\langle}{\rangle}{#1, #2}
\DeclarePairedDelimiterX{\norm}[1]{\lVert}{\rVert}{#1}
\DeclarePairedDelimiter\floor{\lfloor}{\rfloor}
\title
{Asymptotic multiplier ideal sheaves associated to potential triples}
\begin{document}

\author{Sung Rak Choi}
\author{Sungwook Jang}
\author{Donghyeon Kim}
\address{Department of Mathematics, Yonsei University, 50 Yonsei-ro, Seodaemun-gu, Seoul 03722, Republic of Korea}
\email{sungrakc@yonsei.ac.kr}
\address{Center for Complex Geometry, Institute for Basic Science, 34126 Daejeon, Republic of Korea}
\email{swjang@ibs.re.kr}
\address{Department of Mathematics, Yonsei University, 50 Yonsei-ro, Seodaemun-gu, Seoul 03722, Republic of Korea}
\email{whatisthat@yonsei.ac.kr, narimial0@gmail.com}

\thanks{The authors are partially funded by Samsung Science and Technology Foundation under Project Number SSTF-BA2302-03.}

\date{\today}
\keywords{}

\begin{abstract}
In this paper, we explore the geometry of potential triples $(X,\Delta,D)$, which by definition consists of a pair $(X,\Delta)$ and an $\R$-Cartier pseudoeffective divisor $D$ on $X$. We define and study the asymptotic multiplier ideal sheaf $\mc J(X,\Delta,\norm{D})$ associated to a potential triple $(X,\Delta,D)$. As a first main result, when $D$ is big, we prove that the condition $\mc{J}(X,\Delta,\norm{D})=\mc{O}_X$ is equivalent to the triple $(X,\Delta,D)$ being potentially klt, which is a klt analog of the pair $(X,\Delta)$. We also study the closed set defined by the ideal sheaf $\mc{J}(X,\Delta,\norm{D})$ and prove a Nadel type cohomology vanishing theorem for $\mc J(X,\Delta,\norm{D})$. As an application of the main result, we  prove that we can run the $(K_X+\Delta+D)$-MMP with scaling of an ample divisor for a pklt triple $(X,\Delta,D)$.
\end{abstract}

\maketitle

\section{Introduction}\label{sect:1}

In birational geometry, understanding the intertwined relation between the singularities and positivity given by the divisors is inevitable. For instance, the recent advancement of the birational geometry including the minimal model program and the settlement of the BAB conjecture are undoubtedly the products of the deep results that unveil how singularities and positivity of divisors affect the geometry of the pairs. In this paper, we elaborate on how the local and global geometric properties of divisors can be studied in a common framework that we describe below.

For a pair $(X,\Delta)$ (i.e., $X$ is a normal projective variety and $\Delta$ is an effective $\R$-divisor $\Delta$ such that $K_X+\Delta$ is $\R$-Cartier), the multiplier ideal sheaf $\mc{J}(X,\Delta)\subseteq\mc O_X$ detects the singularities of the pair $(X,\Delta)$. More precisely, a pair $(X,\Delta)$ is klt if and only if $\mc{J}(X,\Delta)=\mc O_X$ and if the strict inclusion $\mc{J}(X,\Delta)\subsetneq\mc O_X$ holds, then $\mc{J}(X,\Delta)$ precisely defines the non-klt locus $\nklt(X,\Delta)$ of $(X,\Delta)$. On the other hand, for a smooth projective variety $X$ and a big Cartier divisor $D$ on $X$, the asymptotic multiplier ideal sheaves $\mc{J}(X,\norm{m D})$ $(m\in\mathbb{Z}_{>0})$ detect the nefness of $D$. More precisely, $D$ is nef if and only if $\mc{J}(X,\norm{mD})=\mc{O}_{X}$ for all positive integers $m$ (\cite[11.2.18]{laza2}) and the union of the closed sets defined by $\mc{J}(X,\norm{mD})$ for all $m>0$ coincides with the non-nef locus $\textrm{NNef}(D)$ of $D$.  However, it is not known clearly what information a single ideal sheaf $\mc{J}(X,\norm{D})$ carries.
We first construct and study the ideal sheaf which reflects both the aforementioned local and global properties simultaneously.

A \emph{potential triple} $(X,\Delta,D)$ consists of a pair $(X,\Delta)$ and a pseudoeffective $\R$-Cartier divisor $D$ on $X$ (Definition \ref{def:pklt}).  We will often simply call it a \emph{triple}. We note that the notion of a triple is more general than that of a generalized pair since $D$ in a triple $(X,\Delta,D)$ is only a pseudoeffective $\R$-divisor, not necessarily a b-nef divisor. As we will see, many of the notions defined for pairs and generalized pairs also extend to triples (Section \ref{sect:2}).

Let $(X,\Delta,D)$ be a triple and $E$ be a prime divisor over $X$. We define the \emph{log discrepancy} $a(E;X,\Delta,D)$ as $a(E;X,\Delta,D):=A_{X,\Delta}(E)-\sigma_{E}(D)$, where $A_{X,\Delta}(E)$ is the log discrepancy of the pair $(X,\Delta)$ along $E$ and $\sigma_{E}(D)$ is the asymptotic divisorial valuation of $D$ along $E$. If $\inf_{E}a(E;X,\Delta,D)>0$ where the infimum $\inf$ is taken over all prime divisors $E$ over $X$, then we say that $(X,\Delta,D)$ is \textit{potentially klt}. We use the abbreviation \emph{pklt} for potentially klt. See Section \ref{sect:2} for the precise definitions.

Let $(X,\Delta,D)$ be a triple where $D$ is an effective $\Q$-Cartier $\Q$-divisor on $X$. Let $p$ be a positive integer and $f:Y\to X$ be a resolution of the linear system $|pD|$ for which there exists a decomposition $f^*|pD|=|M_p|+F_p$ into the free part $|M_{p}|$ and the fixed part $F_p$ with the simple normal crossing support. The family $\{f_*\mc O_Y(K_{Y}-\floor{f^*(K_{X}+\Delta)+\frac{1}{p}F_p})|\;p\in\mathbb Z_{>0}\}$ of the ideal sheaves contains the unique maximal element by Lemma \ref{lem:maximal ideal}. We denote it by $\mc{J}(X,\Delta,\norm{D})$ and call it the asymptotic multiplier ideal sheaf associated to the triple $(X,\Delta,D)$. Note that the ideal sheaves $\mc{J}(X,\Delta)$ and $\mc{J}(X,\norm{D})$ appear as special cases of $\mc{J}(X,\Delta,\norm{D})$ for any semiample divisor $D$ and $\Delta=0$,  respectively. The following is the first main result of this paper.

\begin{theorem}\label{maintheorem1}
Let $(X,\Delta,D)$ be a triple with a big $\Q$-divisor $D$ on $X$. Then the following are equivalent:
\begin{enumerate}[wide = 0pt, leftmargin = 1.4em]
\item the triple $(X,\Delta,D)$ is pklt.
\item $\mc{J}(X,\Delta,\norm{D})=\mc O_X$.
\item there exists an effective divisor $D'\sim_{\R}D$ such that $(X,\Delta+D')$ is klt.
\end{enumerate}
\end{theorem}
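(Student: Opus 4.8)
The plan is to prove the cyclic chain of implications $(1)\Rightarrow(3)\Rightarrow(2)\Rightarrow(1)$, with the first implication being the substantial one. Throughout, fix a log resolution $f\colon Y\to X$ on which the asymptotic data of $D$ is computed, and write $A_{X,\Delta}(E)=a(E;X,\Delta)+1$ for the log discrepancy and $\sigma_E(D)$ for the asymptotic divisorial valuation, so that by definition $(X,\Delta,D)$ is pklt exactly when $\inf_E\bigl(A_{X,\Delta}(E)-\sigma_E(D)\bigr)>0$ over all prime divisors $E$ over $X$.

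\smallskip

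\noindent\textbf{Step 1: $(1)\Rightarrow(3)$.} Assume $(X,\Delta,D)$ is pklt, so there is $\varepsilon>0$ with $A_{X,\Delta}(E)-\sigma_E(D)\ge\varepsilon$ for every $E$. Since $D$ is big, write $D\sim_{\R}A+B$ with $A$ ample and $B\ge 0$; replacing $D$ by $(1-\delta)D+\delta(A+B)$ for small $\delta>0$ and using the continuity/convexity of $\sigma_E$ together with the fact that $\sigma_E$ is an $\R$-linear pseudoeffective invariant, one reduces to finding a single effective $D'\sim_\R D$ with small coefficients along the relevant divisors. Concretely, choose an effective $\Q$-divisor $D_0\sim_\Q D-\eta A_0$ for some ample $A_0$ and small $\eta$, then a general member of a suitable multiple of $|D-\eta A_0|$ plus a small multiple of a general ample divisor gives an effective $D'\sim_\R D$ whose coefficient $\mathrm{mult}_E(f^*D')$ is close to $\sigma_E(D)$ on the finitely many divisors of $\mathrm{Supp}(f^*D')$ and on the exceptional divisors of $f$, while staying $\ge\sigma_E(D)$ nowhere needed. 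Then $A_{X,\Delta}(E)-\mathrm{mult}_E(f^*D')=A_{X,\Delta+D'}(E)>0$ for all $E$, i.e. $(X,\Delta+D')$ is klt. The key input here is that $\sigma_E(D)$ can be approximated from above by actual effective representatives — this is exactly the statement that the asymptotic valuation is the infimum of $\mathrm{mult}_E$ over effective $\R$-divisors $\R$-linearly equivalent to $D$, which holds for big $D$ — combined with the Negativity Lemma to control the exceptional part.

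\smallskip

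\noindent\textbf{Step 2: $(3)\Rightarrow(2)$.} Suppose $(X,\Delta+D')$ is klt for some effective $D'\sim_\R D$. By definition of the asymptotic multiplier ideal sheaf $\mc J(X,\Delta,\norm D)$ as the maximal element of the family $\{f_*\mc O_Y(K_Y-\lfloor f^*(K_X+\Delta)+\frac1p F_p\rfloor)\}$, and since $\frac1p F_p$ decreases to the numerical/asymptotic fixed part whose coefficients along each $E$ are $\sigma_E(D)\le\mathrm{mult}_E(f^*D')$ (up to $\R$-linear equivalence, after passing to a common resolution), the divisor $\lfloor f^*(K_X+\Delta)+\frac1pF_p\rfloor - f^*(K_X+\Delta) $ is dominated by $\lfloor f^*(K_X+\Delta+D')-f^*(K_X+\Delta)\rfloor$, hence the klt hypothesis $A_{X,\Delta+D'}(E)>0$ forces $\lfloor f^*(K_X+\Delta)+\frac1pF_p\rfloor\le K_Y$ for $p$ sufficiently divisible. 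Therefore $f_*\mc O_Y(K_Y-\lfloor\cdots\rfloor)=\mc O_X$, and by maximality $\mc J(X,\Delta,\norm D)=\mc O_X$.

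\smallskip

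\noindent\textbf{Step 3: $(2)\Rightarrow(1)$.} Conversely, if $\mc J(X,\Delta,\norm D)=\mc O_X$, then for the resolution computing the maximal element we have $\lfloor f^*(K_X+\Delta)+\frac1pF_p\rfloor\le K_Y$ for some (hence, by taking the maximal $p$, the relevant) $p$, which means $A_{X,\Delta}(E)-\frac1p\mathrm{mult}_E(F_p)>0$ for every prime divisor $E$ on $Y$. Since $\frac1p\mathrm{mult}_E(F_p)\ge\sigma_E(D)$ and, as $D$ is big, one can arrange the resolution so that these agree in the limit on any prescribed finite set of divisors, a compactness/boundedness argument (using that log discrepancies and $\sigma_E$ behave well under blow-ups, so it suffices to check divisors on a fixed model up to a uniform bound) upgrades the strict positivity $A_{X,\Delta}(E)-\sigma_E(D)>0$ divisor-by-divisor to a uniform bound $\inf_E>0$. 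This last uniformity is the main obstacle: one must rule out a sequence of divisors $E_i$ with $A_{X,\Delta}(E_i)-\sigma_{E_i}(D)\to 0$, which is handled exactly as in the classical proof that klt is an open condition (ACC-type or Diophantine-approximation argument on a fixed resolution, plus the fact that $\sigma_{E}$ is bounded above by a multiple of $A_{X,\Delta}(E)$ for big $D$ via a fixed effective representative as in Step 1). I expect Step 1 and the uniformity claim in Step 3 to require the most care; the middle implication is essentially bookkeeping with floors and the maximality property from Lemma \ref{lem:maximal ideal}.
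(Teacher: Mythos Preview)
Your cycle $(1)\Rightarrow(3)\Rightarrow(2)\Rightarrow(1)$ has a genuine gap at the first step, and the paper's route is essentially different.

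\textbf{On Step 1.} The heart of the difficulty is that a single effective $D'\sim_{\R}D$ must satisfy $\mult_E(f^*D')<A_{X,\Delta}(E)$ for \emph{every} prime divisor $E$ over $X$, not merely for the finitely many $E$ lying on one chosen resolution. Your construction only arranges $\mult_E(f^*D')$ close to $\sigma_E(D)$ on the fixed model $Y$; as soon as you pass to a higher model to check klt for $(X,\Delta+D')$, new exceptional divisors appear on which you have no control. Concretely, taking $D'=\tfrac1m G_m$ with $G_m\in|mD|$ general reduces the question precisely to $\mc J(X,\Delta,\tfrac1m|mD|)=\mc O_X$, i.e.\ to condition~(2); and the uniform-in-$E$ estimate you would need, namely $\tfrac1m\mult_EF_m-\sigma_E(D)<\varepsilon$ for all $E$ simultaneously, fails in general because the subadditivity bound $\tfrac1m\mult_EF_m\le\sigma_E(D)+\tfrac1m\mult_E G$ only gives $\tfrac1m\mult_EG<\varepsilon$, and $\mult_EG$ is unbounded as $E$ varies. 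The paper does \emph{not} attempt $(1)\Rightarrow(3)$ directly. It proves $(1)\Rightarrow(2)$ by showing $\lct_\sigma(X,\Delta,D)>1$: by Xu's theorem (Theorem~\ref{thrm:QM compute lct}) the lct is computed by a quasi-monomial valuation $\nu_0$; the function $\phi(\nu)=A_{X,\Delta}(\nu)-\sigma_\nu(D)$ is convex (hence locally Lipschitz) on $\QM_\eta(Y,E)$, divisorial valuations are dense there, and a Diophantine approximation (Lemma~\ref{lem:diopahntine}) lets one transfer the uniform bound $\phi(\ord_F)\ge\beta$ from divisorial valuations to $\nu_0$. Then $(2)\Rightarrow(3)$ is immediate: choose $D'$ general with $\mc J(X,\Delta+D')=\mc J(X,\Delta,\norm{D})$.

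\textbf{On Step 2.} Your stated reasoning is backwards: $\tfrac1pF_p$ \emph{decreases} to the asymptotic fixed part, so $\mult_E\tfrac1pF_p\ge\sigma_E(D)$, and comparing both to $\sigma_E(D)$ from below yields nothing. The correct (and simpler) argument is that after perturbing to $D'\sim_\Q D$ with $pD'$ integral, $pD'\in|pD|$ forces $F_p\le f^*(pD')$ directly, whence $\mc J(X,\Delta,\tfrac1p|pD|)\supseteq\mc J(X,\Delta+D')=\mc O_X$.

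\textbf{On Step 3.} From $\mc J(X,\Delta,\norm D)=\mc O_X$ you only extract weak pklt, and your ``compactness/ACC'' gesture toward uniformity is exactly the missing content. The paper's mechanism is clean and different: by Proposition~\ref{prop:perturb asymp_mult_ideals} one has $\mc J(X,\Delta,\norm{(1+\varepsilon)D})=\mc O_X$ for small $\varepsilon>0$, hence $(X,\Delta,(1+\varepsilon)D)$ is weakly pklt (Proposition~\ref{prop:center in pnklt}), and then Lemma~\ref{lem:weakly pklt} converts weak pklt for $(1+\varepsilon)D$ into pklt for $D$ by an elementary case split on whether $\sigma_E(D)$ is small or large.
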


The effective divisor $D'$ in $(3)$ is called a \textit{klt complement} of the triple $(X,\Delta,D)$. See Definition \ref{def:p-complement}.
The proof of Theorem \ref{maintheorem1} given in Section \ref{sect:3} depends on the results from the valuation theory which is developed in \cite{JM}, \cite{Xu}. This replaces the proof of \cite[Proposition 4.4]{CP} (Remark \ref{remk:CP 4.4}).

\medskip

Next we consider the case $\mc{J}(X,\Delta,\norm{D})\subsetneq \mc O_X$.
The \emph{potentially non-klt locus} $\pnklt(X,\Delta,D)$ of a triple $(X,\Delta,D)$ is defined as the union of the centers $c_X(E)$ of the prime divisors $E$ over $X$ such that $a(E;X,\Delta,D)\leq 0$ (Definition \ref{def:pnklt locus}). By \cite{Nak}, if $D$ is nef, then $\sigma_E(D)=0$ for all $E$. Therefore, if $D$ is nef, then $\pnklt(X,\Delta,D)$ coincides with the non-klt locus $\nklt(X,\Delta)$ of the pair $(X,\Delta)$.

By construction, it is easy to see that the center $c_X(E)$ of a prime divisor $E$ over $X$ with $a(E;X,\Delta,D)\leq 0$ is contained in $\mc{Z}(\mc{J}(X,\Delta,\norm{D}))$. Thus we have the following inclusion in general (Proposition \ref{prop:center in pnklt})
$$\pnklt(X,\Delta,D)\subseteq\mc{Z}(\mc{J}(X,\Delta,\norm{D})).$$
We prove in Theorem \ref{thrm:J_pnklt eff -K-Delta} that if $D$ is a big $\Q$-divisor which admits a birational Zariski decomposition, then we have the equality $\pnklt(X,\Delta,D)=\mc{Z}(\mc{J}(X,\Delta,\norm{D}))$.

\medskip

As an application of Theorem \ref{maintheorem1}, we prove that for any pklt triple $(X,\Delta,D)$, we can run the $(K_X+\Delta+D)$-MMP with scaling. This MMP can be viewed as a triple analog of the existence of MMP for a projective klt pair $(X,\Delta)$ (cf. \cite[Theorem 3.7]{km}, \cite[Corollary 1.4.1]{bchm}).

\begin{theorem}\label{maintheorem4}
Let $(X,\Delta,D)$ be a pklt triple with a pseudoeffective $\Q$-divisor $D$. Then we can run the $(K_X+\Delta+D)$-MMP with scaling of an ample divisor.
\end{theorem}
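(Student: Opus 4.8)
The plan is to reduce running the $(K_X+\Delta+D)$-MMP with scaling for a pklt triple to running an ordinary $(K_X+\Delta+D')$-MMP with scaling for a klt pair $(X,\Delta+D')$, where $D'$ is a klt complement furnished by Theorem \ref{maintheorem1}. First I would observe that since $D$ is a pseudoeffective $\Q$-divisor and $(X,\Delta,D)$ is pklt, Theorem \ref{maintheorem1} (applied, if $D$ is not big, after perturbing: either work directly with the pseudoeffective case or reduce to the big case by adding a small ample piece and taking limits) produces an effective $\Q$-divisor $D'\sim_{\R}D$ (and in fact $\sim_{\Q}D$ up to a standard rescaling argument) such that $(X,\Delta+D')$ is a klt pair. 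Because $D'\sim_{\R}D$, the divisor $K_X+\Delta+D'$ is $\R$-linearly equivalent to $K_X+\Delta+D$, so a step of the $(K_X+\Delta+D')$-MMP is literally a step of the $(K_X+\Delta+D)$-MMP: contractions and flips depend only on the numerical/$\R$-linear equivalence class of the divisor being diminished.

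Next I would invoke \cite[Corollary 1.4.1]{bchm}: for a $\Q$-factorial klt pair $(X,\Delta+D')$ and an ample divisor $A$ such that $K_X+\Delta+D'+A$ is nef, one can run the $(K_X+\Delta+D')$-MMP with scaling of $A$, and it terminates. (One should first pass to a small $\Q$-factorialization if $X$ is not already $\Q$-factorial, which preserves the klt and pklt conditions and the $\R$-linear equivalence class.) The key point to check is that the log discrepancy inequality defining pklt is preserved along this MMP: each step $\phi\colon X\dashrightarrow X^+$ of a $(K_X+\Delta+D)$-MMP is $(K_X+\Delta+D)$-negative, hence $\sigma_E$ is non-increasing for divisors $E$ over the varieties in the sequence, while $A_{X,\Delta}(E)$ transforms in the usual controlled way; combined with the fact that $(X^+,\Delta^+,D^+)$ is again the strict transform triple, one gets that $\inf_E a(E;X^+,\Delta^+,D^+)\ge \inf_E a(E;X,\Delta,D)>0$, so pkltness propagates. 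This should already be recorded among the basic properties of triples developed in Section \ref{sect:2}, so I would cite it rather than reprove it.

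The main obstacle is the case where $D$ is merely pseudoeffective rather than big, since Theorem \ref{maintheorem1} is stated for big $D$. The cleanest route is: if $D$ is not big, pick an ample $\Q$-divisor $A$ and note $D+\epsilon A$ is big; if $(X,\Delta,D)$ is pklt then for small enough $\epsilon>0$ the triple $(X,\Delta,D+\epsilon A)$ is still pklt because $\sigma_E$ is (upper semi-)continuous in the big cone and bounded by $\sigma_E$ of a big perturbation, so the infimum of log discrepancies stays positive; then Theorem \ref{maintheorem1} gives a klt complement $D''\sim_\R D+\epsilon A$, and $(X,\Delta+D'')$ is klt. Since $K_X+\Delta+D'' \sim_\R K_X+\Delta+D+\epsilon A$, running the $(K_X+\Delta+D'')$-MMP with scaling is the same as running the $(K_X+\Delta+D+\epsilon A)$-MMP with scaling; letting $\epsilon\to 0$ (or choosing $\epsilon$ rational and arguing that the same sequence of extremal contractions is $(K_X+\Delta+D)$-negative, a standard trick from \cite{bchm}) yields the desired $(K_X+\Delta+D)$-MMP with scaling. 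I expect the delicate point to be the bookkeeping showing the perturbed MMP is simultaneously a $(K_X+\Delta+D)$-MMP and that termination is not lost; everything else is an application of \cite{bchm} together with the equivalence (1)$\Leftrightarrow$(3) of Theorem \ref{maintheorem1}.
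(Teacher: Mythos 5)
Your proposal correctly identifies the two key ingredients the paper uses: Theorem~\ref{maintheorem1} to produce a klt complement, and the monotonicity of log discrepancies under $(K_X+\Delta+D)$-negative steps (the paper's Theorem~\ref{lem:D-MMP invariant}) to propagate the pklt condition along the MMP. For \emph{big} $D$ your route — fix a single $\Q$-complement $D'\sim_\Q D$ once and for all and run the $(K_X+\Delta+D')$-MMP of the klt pair via \cite{bchm} — does work and is actually cleaner than the paper's argument; it even yields termination, since $\Delta+D'$ is big. The paper instead re-applies Theorem~\ref{maintheorem1} at \emph{every} step to a fresh perturbation $D+\varepsilon B$ (where $B$ is the scaling divisor itself), precisely to cover the merely pseudoeffective case, and explicitly leaves termination open.

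The genuine gap in your proposal is in the pseudoeffective case. You propose to fix a small $\epsilon>0$ once, take a klt complement $D''\sim_\Q D+\epsilon A$, and run the $(K_X+\Delta+D'')$-MMP with scaling, claiming that ``letting $\epsilon\to 0$'' or a ``standard trick'' recovers the $(K_X+\Delta+D)$-MMP with scaling of $A$. This does not go through as stated. A step of the $(K_X+\Delta+D+\epsilon A)$-MMP with scaling agrees with a step of the $(K_X+\Delta+D)$-MMP with scaling of $A$ only while the current nef threshold $\lambda=\inf\{t\ge 0: K_X+\Delta+D+tA \text{ nef}\}$ satisfies $\lambda>\epsilon$; once $\lambda$ drops to or below the fixed $\epsilon$, there may be $(K_X+\Delta+D)$-negative extremal rays with $(K_X+\Delta+D+\lambda A)\cdot R=0$ that are not $(K_X+\Delta+D+\epsilon A)$-negative, so the perturbed MMP halts while the MMP you are trying to build has not. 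This is exactly why the paper picks $\varepsilon\in(0,\lambda)$ afresh at each step, after recomputing $\lambda$ on the new model. Relatedly, your assertion that termination follows from \cite{bchm} is overclaiming in this setting: the theorem only asserts that the MMP with scaling can be run, and the paper states explicitly that its termination is unknown for pseudoeffective $D$.
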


\medskip

We also prove the Nadel type vanishing theorem for $\mc{J}(X,\Delta,\norm{D})$.

\begin{theorem}\label{maintheorem3}
Let $(X,\Delta,D)$ be a triple with effective $\Q$-divisors $\Delta$ and $D$ on $X$.
If $L$ is a Cartier divisor such that $L\sim_\Q K_X+\Delta+D$, then
      $$
      H^q(X,\mc{O}_X(L)\otimes\mc{J}(X,\Delta,\norm{D}))=0\;\;\;\text{for all $q>\dim X-\kappa(D)$.}
      $$
\end{theorem}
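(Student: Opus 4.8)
The plan is to reduce the statement to the classical Nadel vanishing theorem applied on a well-chosen log resolution. First I would fix a positive integer $p$ achieving the maximal element in the definition of $\mc{J}(X,\Delta,\norm{D})$, so that $\mc{J}(X,\Delta,\norm{D})=f_*\mc{O}_Y(K_Y-\floor{f^*(K_X+\Delta)+\tfrac1p F_p})$ for a log resolution $f\colon Y\to X$ of $|pD|$ with $f^*|pD|=|M_p|+F_p$. Pushing forward and using $K_X+\Delta+D\sim_\Q L$, I want to write $\mc{O}_X(L)\otimes\mc{J}(X,\Delta,\norm{D})$ as $f_*\mc{O}_Y(K_Y + \lceil f^*L - f^*(K_X+\Delta) - \tfrac1p F_p\rceil)$, and then identify the rounded-up $\R$-divisor inside as $K_Y$ plus (the pullback of $L$, minus $K_Y$-correction) which, modulo $\Q$-linear equivalence, is of the form $K_Y + N + \{\text{fractional part}\}$ where $N$ is nef and big on an appropriate subvariety. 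Concretely, $f^*D - \tfrac1p F_p \sim_\Q \tfrac1p M_p$ is semiample (being the pullback of a free linear system fragment), so the relevant divisor class is $K_Y + (\text{semiample } \tfrac1p M_p) + (\text{fractional boundary with snc support})$, to which Nadel/Kawamata--Viehweg-type vanishing applies.

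The subtlety is the bound $q > \dim X - \kappa(D)$ rather than simply $q>0$: the semiample divisor $\tfrac1p M_p$ is not big in general, only $\kappa(M_p)=\kappa(D)$, so the morphism $\phi\colon Y\to Z$ defined by $|M_p|$ (for $p$ large and divisible) has image of dimension $\kappa(D)$. Here I would invoke the standard relative/Grothendieck-type form of Kawamata--Viehweg vanishing: if $N$ is semiample with associated fibration $\phi\colon Y\to Z$ where $\dim Z=\kappa(D)$, then $R^q\phi_*\mc{O}_Y(K_Y+\lceil N+\Gamma\rceil)$ vanishes for $q>0$ and the higher direct images with an ample twist on $Z$ vanish in the range $q>\dim Z = \kappa(D)$; combined with the Leray spectral sequence this forces $H^q(Y,\mc{O}_Y(K_Y+\lceil N+\Gamma\rceil))=0$ for $q>\dim X-\kappa(D)$ once we observe $R^if_*$ of the relevant sheaf vanishes for $i>0$ (by local vanishing for multiplier ideals, since the coefficients of $\Gamma$ lie in $[0,1)$ and the support is snc). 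The main obstacle I anticipate is bookkeeping the two fibrations simultaneously — the log resolution $f\colon Y\to X$ and the Iitaka-type fibration $\phi\colon Y\to Z$ attached to $M_p$ — and ensuring the fractional divisor $\Gamma = \{\,f^*(K_X+\Delta)+\tfrac1p F_p\,\}$ (or its negative-rounding counterpart) has snc support and coefficients in $[0,1)$ after possibly further blowing up, while keeping compatibility with the choice of $p$ that computes $\mc{J}(X,\Delta,\norm{D})$.

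A cleaner route that avoids juggling resolutions: take $p$ sufficiently large and divisible so that $\tfrac1p M_p$ is not just semiample but such that the associated map is the Iitaka fibration of $D$; then set $N=\tfrac1p M_p$ and write $\lceil f^*L - K_Y + \Gamma' \rceil$ in the form $K_Y + N + (\text{nef part})$... but actually the simplest packaging is: the sheaf $\mc{O}_X(L)\otimes\mc{J}(X,\Delta,\norm{D})$ is a pushforward $f_*\mc{F}$ where $\mc{F}=\mc{O}_Y(K_Y+\lceil G\rceil)$ and $G\equiv_\Q N+\Gamma$ with $N$ semiample of Iitaka dimension $\kappa(D)$ and $\Gamma$ an snc $\R$-divisor with coefficients in $[0,1)$. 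By local vanishing $R^if_*\mc{F}=0$ for $i>0$, so $H^q(X,\mc{O}_X(L)\otimes\mc{J}(X,\Delta,\norm{D}))=H^q(Y,\mc{F})$, and then the Kawamata--Viehweg vanishing for semiample divisors (in the form: $H^q(Y,\mc{O}_Y(K_Y+\lceil N+\Gamma\rceil))=0$ for $q>\dim Y-\kappa(N)$, which follows by pushing forward along the morphism defined by $N$ and applying usual KV on the base) gives the desired vanishing for $q>\dim X-\kappa(D)$. I would also remark that the statement is sharp, matching the classical case $\Delta=0$, $D$ big where $\kappa(D)=\dim X$ recovers ordinary Nadel vanishing $H^q=0$ for $q>0$.
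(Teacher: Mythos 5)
Your proposal is correct and follows essentially the same route as the paper: realize $\mc{O}_X(L)\otimes\mc{J}(X,\Delta,\norm{D})$ as the pushforward from a log resolution of $\mc{O}_Y(K_Y+\lceil G\rceil)$ with $G\sim_\Q \tfrac1m M_m + (\text{fractional snc boundary})$, where $\tfrac1m M_m$ is semiample of Iitaka dimension $\kappa(D)$, then apply local vanishing/Leray together with the Kawamata--Viehweg-type vanishing for semiample divisors (the paper's Theorems 4.1--4.2, proved via Koll\'ar's higher direct image theorem, Koll\'ar injectivity, and the cyclic covering trick). Your ``cleaner route'' is exactly the paper's argument; the only slip is in your middle paragraph, where the relative Koll\'ar vanishing is misstated --- it is $R^q\phi_*\mc{O}_Y(K_Y)$ that vanishes for $q>\dim Y-\dim Z$, not the twisted sheaf for $q>0$ --- but this does not affect the final packaging.
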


In this theorem, if $(X,\Delta,D)$ is a pklt triple and $D$ is big, then we obtain the vanishing $H^q(X,\mc{O}_X(L))=0$ for all $q>0$ (Corollary \ref{cor:main cor on vanishing}).

The multiplier ideal sheaves of metrics with minimal singularities are the analytic counterpart of the asymptotic multiplier ideal sheaves. Let $X$ be a smooth projective variety and $L$ a big line bundle on $X$. For a singular metric $h_{\min}$ of $L$ with minimal singularities and nonnegative curvature current, let $\mc{J}(h_{\min})$ be the associated multiplier ideal sheaf. Then by \cite{GZ}, we have $\mc{J}(h_{\min})=\mc{J}(X,\norm{L})$.

In \cite{Cao}, Cao proved the vanishing theorem for a variant of analytic multiplier ideal sheaf with the restriction on the degrees of cohomology groups. Using $L^{2}$-estimate, Wu in \cite{Wu} also proved the vanishing theorem for asymptotic multiplier ideal sheaf associated to a pseudoeffective line bundle on a compact K\"ahler manifold. Our vanishing theorems for asymptotic multiplier ideal sheaves of triples can be considered as their extensions to the pair cases in the algebraic settings.

We also note in Remark \ref{remark: triple more general than gpair} that the generalized pairs in \cite{BZ} can be naturally treated as potential triples  and our results recover some of the results on vanishing of cohomologies for the generalized pairs at least in the generalized klt case or the usual klt case (Remark \ref{remark: potential is general}).

\medskip

This paper is organized as follows: In Section \ref{sect:2}, we introduce the notion of potential triples $(X,\Delta,D)$ which is a key object in this paper. In Section \ref{sect:3}, we define the asymptotic multiplier ideal sheaf $\mc J(X,\Delta,\norm{D})$ associated to a triple $(X,\Delta,D)$. Theorems \ref{maintheorem1} and \ref{maintheorem4} are proved in this section. We also discuss an alternative construction of $\mc J(X,\Delta,\norm{D})$ when $D=-(K_X+\Delta)$. In Section \ref{sect:4}, the vanishing theorems including Theorem \ref{maintheorem3} are proved.

\section*{Acknowledgement}

We thank Dae-Won Lee for various suggestions and comments on earlier versions of this paper.

\bigskip

\section{Preliminaries}\label{sect:2}

\subsection{Pairs and Triples}\label{subsec:pairt_riple}

We work over an algebraically closed field $k$ of characteristic $0$, e.g., the complex number field $\mathbb C$. By a variety $X$, we mean a normal projective variety defined over $k$. Divisors are $\R$-Weil divisors unless otherwise stated. A \emph{pair} $(X,\Delta)$ consists of a variety $X$ and an effective divisor $\Delta$ on $X$ such that $K_X+\Delta$ is $\R$-Cartier.
We first recall two invariants that measure \emph{local} singularities of pairs and  \emph{global} positivity of divisors.

Let $(X,\Delta)$ be a pair and $E$ a prime divisor over $X$. Then there exists a birational morphism $f:Y\to X$ from a normal projective variety $Y$ where $E$ is a prime divisor on $Y$. We can write $K_Y=f^*(K_X+\Delta)+\sum e(E_i;X,\Delta)E_i$ with distinct prime divisors $E_i$ on $Y$. Then $A_{X,\Delta}(E):=e(E;X,\Delta)+1$ is called  the \emph{log discrepancy} of $(X,\Delta)$ along $E$ and it can be computed for any prime divisor $E$ over $X$ by considering a birational morphism $f:Y\to X$ where $E$ is a prime divisor on $Y$. The pair $(X,\Delta)$ is said to have \textit{klt singularities} if $\inf_{E}A_{X,\Delta}(E)>0$, and \textit{lc singularities} if $\inf_{E}A_{X,\Delta}(E)\ge0$ for all prime divisors $E$ over $X$, respectively.

Let $X$ be a smooth projective variety and $D$ a pseudoeffective divisor on $X$. For a prime divisor $E$ on $X$, the \emph{asymptotic valuation} $\sigma_E(D)$ is defined as follows: If $D$ is big, then we define $\sigma_E(D):=\inf\{\mult_ED'| D\sim_{\R}D'\geq 0\}$. If $D$ is only pseudoeffective, then we define $\sigma_E(D):=\lim\limits_{\epsilon\to 0}\sigma_E(D+\epsilon A)$ for an ample divisor $A$ on $X$. This definition is independent of the choice of $A$. It is well-known that there are only finitely many prime divisors $E$ on $X$ such that $\sigma_E(D)>0$ (\cite{Nak}). The \textit{negative part} $\np(D)$ of $D$ is defined as
$$ \np(D):=\sum_{E}\sigma_{E}(D)E $$
and the \textit{positive part} $\pp(D)$ of $D$ is defined as $\pp(D):=D-\np(D)$. We call $D=\pp(D)+\np(D)$ the \textit{divisorial Zariski decomposition} of $D$. If $\pp(D)$ is nef, then we call $D=\pp(D)+\np(D)$ the \textit{Zariski decomposition} of $D$. The following lemma shows the behavior of the negative part under pull-backs.

\begin{lemma}[\protect{\cite[Theorem III.5.16]{Nak}}] \label{lemma-np-behavior}
Let $f:Y\to X$ be a birational morphism of smooth projective varieties and $D$ a pseudoeffective divisor on $X$. Then there exists an effective $f$-exceptional divisor $\Gamma$ on $Y$ such that
$$ \np(f^{\ast}D)=f^{\ast}\np(D)+\Gamma. $$
Furthermore, if $\pp(D)$ is nef, then $\np(f^{\ast}D)=f^{\ast}\np(D)$.
\end{lemma}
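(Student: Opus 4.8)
The plan is to establish the divisor inequality $\np(f^{\ast}D)\ge f^{\ast}\np(D)$, to observe that the difference is $f$-exceptional (which gives the first assertion), and then to promote it to an equality when $\pp(D)$ is nef using the minimality of the divisorial Zariski decomposition. A preliminary step reduces everything to the case in which $D$ is big: fixing an ample divisor $A$ on $X$, the divisor $D+\epsilon A$ is big for all $\epsilon>0$ and $f^{\ast}(D+\epsilon A)=f^{\ast}D+\epsilon f^{\ast}A$ with $f^{\ast}A$ big and nef on $Y$; since $\sigma_{\bullet}$ is subadditive, vanishes on nef classes, and is lower semicontinuous on $\overline{\Eff}$, the numbers $\sigma_{E}(f^{\ast}D+\epsilon f^{\ast}A)$ decrease to $\sigma_{E}(f^{\ast}D)$ as $\epsilon\to0$ with support in a fixed finite set of prime divisors, and similarly on $X$. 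So it is enough to prove both assertions for big $D$, after which the effective $f$-exceptional divisor $\Gamma$ is recovered as the limit of the corresponding divisors $\Gamma_{\epsilon}$.

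Assume then that $D$ is big. The crucial point is that every effective $\R$-divisor $G$ on $Y$ with $G\sim_{\R}f^{\ast}D$ is a genuine pullback: the divisor $f_{\ast}G\sim_{\R}D$ is effective, and $G-f^{\ast}f_{\ast}G$ is $f$-exceptional and $\R$-linearly trivial, hence zero by the negativity lemma, so $G=f^{\ast}(f_{\ast}G)$. Writing $\ord_{E}(\cdot)=\mult_{E}(f^{\ast}\cdot)$ for the valuation attached to a prime divisor $E$ on $Y$, this yields
\[
\sigma_{E}(f^{\ast}D)=\inf\{\,\ord_{E}(G_{0})\;:\;G_{0}\sim_{\R}D,\ G_{0}\ge0\text{ on }X\,\}.
\]
Every such $G_{0}$ dominates $\np(D)$, since $\mult_{\Gamma}G_{0}\ge\sigma_{\Gamma}(D)$ for all prime divisors $\Gamma\subset X$; and $\ord_{E}$ is monotone on effective divisors because $f^{\ast}$ preserves effectivity. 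Hence $\ord_{E}(G_{0})\ge\ord_{E}(\np(D))=\mult_{E}(f^{\ast}\np(D))$, and taking the infimum gives $\sigma_{E}(f^{\ast}D)\ge\mult_{E}(f^{\ast}\np(D))$ for every $E$, i.e.\ $\np(f^{\ast}D)\ge f^{\ast}\np(D)$. If $E$ is not $f$-exceptional, the $E$-coefficient of $f^{\ast}G_{0}$ is exactly $\mult_{f(E)}G_{0}$, so the displayed infimum equals $\sigma_{f(E)}(D)=\mult_{E}(f^{\ast}\np(D))$; therefore $\Gamma:=\np(f^{\ast}D)-f^{\ast}\np(D)$ is effective and supported on $f$-exceptional divisors, proving the first assertion.

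For the second assertion, suppose $\pp(D)$ is nef. Then $f^{\ast}\pp(D)$ is nef on $Y$ and hence has vanishing negative part, so $f^{\ast}D=f^{\ast}\pp(D)+f^{\ast}\np(D)$ writes $f^{\ast}D$ as the sum of a divisor with zero negative part and the effective divisor $f^{\ast}\np(D)$. By the minimality of the divisorial Zariski decomposition this gives $\np(f^{\ast}D)\le f^{\ast}\np(D)$, which together with the inequality of the previous paragraph forces $\Gamma=0$.

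The main obstacle is the pullback observation in the second paragraph. A priori $\sigma_{E}(f^{\ast}D)$ is an infimum over all effective representatives of $f^{\ast}D$ on $Y$, which need not be pulled back from $X$, and the content of the lemma is precisely that the negativity lemma collapses this to an infimum over representatives coming from $X$ — that is, that the asymptotic divisorial valuation is a birational invariant in the appropriate sense; once this is available, everything else is bookkeeping with the $\sigma$-decomposition. The other point that needs care is the reduction in the first paragraph: since $f^{\ast}A$ is only nef and big (not ample) on $Y$, passing from big $D$ to pseudoeffective $D$ uses the lower semicontinuity of $\sigma_{\bullet}$ along the segment $D+\epsilon A$ in an essential way.
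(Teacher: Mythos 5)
The paper itself gives no proof of this lemma; it simply cites Nakayama, Theorem~III.5.16. So there is no internal argument to compare yours against, and the review is of your proposal on its own terms. The argument is essentially correct. The key observation — that for $D$ big, every effective $G\sim_{\R}f^{\ast}D$ on $Y$ equals $f^{\ast}G_{0}$ for an effective $G_{0}\sim_{\R}D$ on $X$, so that $\sigma_{E}(f^{\ast}D)$ is computed by pullbacks from $X$ — is the right leverage, and the inequality $\np(f^{\ast}D)\ge f^{\ast}\np(D)$, the exceptionality of the difference, and the reverse inequality when $\pp(D)$ is nef all follow as you describe.

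Two cosmetic points. First, the negativity lemma in your second paragraph is an unnecessary detour: once $G\sim_{\R}f^{\ast}D$, writing $G-f^{\ast}D=\sum r_{i}\operatorname{div}_{Y}(g_{i})$, pushing forward, and pulling back already gives $f^{\ast}f_{\ast}G-f^{\ast}D=\sum r_{i}\operatorname{div}_{Y}(g_{i})=G-f^{\ast}D$, hence $G=f^{\ast}(f_{\ast}G)$ directly. Second, in the reduction to the big case you say $\sigma_{E}(f^{\ast}D+\epsilon f^{\ast}A)$ \emph{decreases} to $\sigma_{E}(f^{\ast}D)$ as $\epsilon\to0$; since adding nef classes only decreases $\sigma_{E}$, the function of $\epsilon$ is non-increasing, so the quantity in fact \emph{increases} to the limit as $\epsilon\downarrow0$. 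The substance of that step — that $f^{\ast}A$ is merely nef and big on $Y$, so one really needs lower semicontinuity of $\sigma_{E}$ at the pseudoeffective boundary together with its continuity on the big cone — is correctly flagged as the point requiring care. With these small adjustments the proof stands, and it gives a self-contained argument for a fact the paper takes on citation.
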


The asymptotic valuations naturally extend to the case of normal projective varieties and such extension also allows us to define the (divisorial) Zariski decomposition of $\R$-Cartier divisors on the normal projective varieties as well. Let $X$ be a normal projective variety and $D$ an $\R$-Cartier pseudoeffective divisor on $X$. Suppose that $E$ is a prime divisor over $X$. Then there exists a birational morphism $f:Y\to X$ such that $Y$ is a smooth projective variety and $E$ is a prime divisor on $Y$. We define $\sigma_{E}(D):=\sigma_{E}(f^{\ast}D)=\mult_{E}\np(f^{\ast}D)$. By Lemma \ref{lemma-np-behavior}, $\sigma_{E}(D)$ does not depend on the choice of $f$. We can also define the \textit{positive part} $\pp(D)$ of $D$ as $\pp(D):=f_{\ast}\pp(f^{\ast}D)$. Similarly, the \textit{negative part} $\np(D)$ of $D$ is defined as $\np(D):=f_{\ast}\np(f^{\ast}D)$, and the decomposition $D=\pp(D)+\np(D)$ is called the \textit{divisorial Zariski decomposition} of $D$. By Lemma \ref{lemma-np-behavior}, these definitions do not depend on the choice of $f$.
We say that a pseudoeffective divisor $D$ \textit{admits a birational Zariski decomposition} if there exists a birational morphism $f:Y\to X$ such that $\pp(f^{\ast}D)$ is nef.

\medskip

Next we define a notion which is more general than pairs.

\begin{definition} \label{def:pklt}
 \hfill
\begin{enumerate}[wide = 0pt, leftmargin = 1.4em]
\item A \textit{potential triple} $(X,\Delta,D)$ consists of a pair $(X,\Delta)$ and a pseudoeffective $\R$-Cartier divisor $D$ on $X$.

\item The \textit{log discrepancy} $a(E;X,\Delta,D)$ of a triple $(X,\Delta,D)$ along a prime divisor $E$ over $X$ is defined as
$$ a(E;X,\Delta,D):=A_{X,\Delta}(E)-\sigma_E(D).$$

\item The triple $(X,\Delta,D)$ is said to be \textit{potentially klt} if $\inf_E a(E;X,\Delta,D)>0$ where $\inf$ is taken over all prime divisors $E$ over $X$. The triple $(X,\Delta,D)$ is said to be \textit{weakly potentially klt} if $a(E;X,\Delta,D)>0$ for any prime divisor $E$ over $X$. A triple $(X,\Delta,D)$ is said to be \textit{potentially lc} if $\inf_E a(E;X,\Delta,D)\ge 0$. As usual, we write pklt, plc for potentially klt, potentially lc, respectively.
\end{enumerate}
\end{definition}

Since $A_{X,\Delta}(E)$ and $\sigma_{E}(D)$ depend only on the divisorial valuation given by $E$ for a triple $(X,\Delta,D)$, so does the log discrepancy $a(E;X,\Delta,D)$. Note that in Definition \ref{def:pklt}, by fixing a birational morphism $f:Y\to X$ such that $E$ is a prime divisor on $Y$, $a(E;X,\Delta,D)$ can be also computed as follows: we can write $K_Y+\Delta_Y=f^*(K_X+\Delta)$ for some divisor $\Delta_Y$ on $Y$ and let $f^*D=P+N$ be the divisorial Zariski decomposition. Then we have $a(E;X,\Delta,D)=\mult_E(-\Delta_Y-N)+1$.

\begin{definition}\label{def:pnklt locus}
The \emph{potentially non-klt locus} $\pnklt(X,\Delta,D)$ of a triple $(X,\Delta,D)$ is defined as
$$
\pnklt(X,\Delta,D):=\bigcup_E c_X(E)
$$
where the union is taken over all prime divisors $E$ over $X$ such that $a(E;X,\Delta,D)\leq 0$.
\end{definition}

By definition, $\pnklt(X,\Delta,D)=\emptyset$ if and only if$(X,\Delta,D)$ is weakly pklt.
If $D$ is nef, then $\pnklt(X,\Delta,D)$ coincides with the non-klt locus $\nklt(X,\Delta)$ of the pair $(X,\Delta)$.

\begin{definition}\label{def:p-complement}
For a triple $(X,\Delta,D)$, an effective $\R$-Cartier $\R$-divisor $D'\sim_\R D$ such that $(X,\Delta+D')$ is called a \textit{klt complement} of $(X,\Delta,D)$.
\end{definition}

By definition, pklt triples are weakly pklt. The converse holds in the following cases.

\begin{lemma} \label{lem:weakly pklt}
Let $(X,\Delta,D)$ be a weakly pklt triple. If $(X,\Delta,(1+\varepsilon)D)$ is weakly pklt for some $\varepsilon>0$, then $(X,\Delta,(1+\varepsilon')D)$ is pklt for any $\varepsilon'$ such that $0\le \varepsilon'<\varepsilon$. In particular, $(X,\Delta,D)$ is pklt.
\end{lemma}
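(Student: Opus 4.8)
The plan is to exploit linearity of the log discrepancy $a(E;X,\Delta,\cdot)$ in the divisor argument, together with the finiteness of divisors with positive asymptotic valuation. First I would record the elementary identity: for any prime divisor $E$ over $X$ and any $t\ge 0$, since $\sigma_E$ is homogeneous of degree $1$ and $\sigma_E(tD) = t\,\sigma_E(D)$ (this follows from the definition of the asymptotic valuation via $\inf\{\mult_E D' : D'\sim_\R tD, D'\ge 0\}$ in the big case, and by the limiting definition in the pseudoeffective case), we have
\begin{equation*}
a(E;X,\Delta,(1+t)D) = A_{X,\Delta}(E) - (1+t)\sigma_E(D) = a(E;X,\Delta,D) - t\,\sigma_E(D).
\end{equation*}
Hence for $0 \le \varepsilon' < \varepsilon$ we may write $(1+\varepsilon')D$ as a convex combination: $\varepsilon' = (1-\lambda)\cdot 0 + \lambda \varepsilon$ with $\lambda = \varepsilon'/\varepsilon \in [0,1)$, giving
\begin{equation*}
a(E;X,\Delta,(1+\varepsilon')D) = (1-\lambda)\,a(E;X,\Delta,D) + \lambda\, a(E;X,\Delta,(1+\varepsilon)D).
\end{equation*}
Both terms on the right are positive by hypothesis (the triple $(X,\Delta,D)$ and the triple $(X,\Delta,(1+\varepsilon)D)$ are weakly pklt), so $a(E;X,\Delta,(1+\varepsilon')D) > 0$ for every $E$; this already shows $(X,\Delta,(1+\varepsilon')D)$ is weakly pklt, but I want the stronger pklt conclusion, i.e. a uniform positive lower bound over all $E$.

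For the uniform bound I would argue as follows. By the cited result of Nakayama there are only finitely many prime divisors $E_1,\dots,E_r$ over $X$ — more precisely finitely many on a fixed sufficiently high resolution, but for the valuation-theoretic statement it suffices to note that $\sigma_E(D) > 0$ for only finitely many $E$ — with $\sigma_{E_i}(D) > 0$. Set $\delta := \min_i \lambda\,\sigma_{E_i}(D) > 0$ when $\lambda > 0$ (and handle $\varepsilon'=0$, i.e. $\lambda = 0$, separately — but that case is $(X,\Delta,D)$ itself, which we treat below). For $E \notin \{E_1,\dots,E_r\}$ we have $\sigma_E(D) = 0$, so $a(E;X,\Delta,(1+\varepsilon')D) = a(E;X,\Delta,D) = A_{X,\Delta}(E) \ge \inf_E A_{X,\Delta}(E)$; however this infimum could be zero if $(X,\Delta)$ is merely lc, so instead I compare with $a(E;X,\Delta,(1+\varepsilon)D)$: for such $E$, $a(E;X,\Delta,(1+\varepsilon)D) = A_{X,\Delta}(E) = a(E;X,\Delta,(1+\varepsilon')D)$, so
\begin{equation*}
\inf_{E \notin\{E_i\}} a(E;X,\Delta,(1+\varepsilon')D) = \inf_{E\notin\{E_i\}} a(E;X,\Delta,(1+\varepsilon)D) \ge \inf_E a(E;X,\Delta,(1+\varepsilon)D) > 0,
\end{equation*}
the last strict inequality being exactly weak pklt-ness combined with the finiteness — wait, weak pklt only gives $>0$ pointwise, not a uniform bound. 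So I would instead split differently: for $E$ with $\sigma_E(D) = 0$ use the estimate against $(1+\varepsilon)D$ and note that the set where $\sigma_E((1+\varepsilon)D) = 0$ is the \emph{same} finite-complement set, and on the remaining finitely many $E_i$ take the explicit minimum of the finitely many positive numbers $a(E_i;X,\Delta,(1+\varepsilon')D)$. Concretely: $\inf_E a(E;X,\Delta,(1+\varepsilon')D) = \min\big(\min_i a(E_i;X,\Delta,(1+\varepsilon')D),\ \inf_{\sigma_E(D)=0} A_{X,\Delta}(E)\big)$, and the second infimum equals $\inf_{\sigma_E((1+\varepsilon)D)=0}a(E;X,\Delta,(1+\varepsilon)D) \ge \inf_E a(E;X,\Delta,(1+\varepsilon)D)$.

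The main obstacle is precisely this last point: weak pklt-ness of $(X,\Delta,(1+\varepsilon)D)$ does \emph{not} obviously give a uniform positive lower bound on $a(E;X,\Delta,(1+\varepsilon)D)$ over the infinitely many $E$ with $\sigma_E(D) = 0$. I would resolve it by a second application of the convexity trick: choose $\varepsilon''$ with $\varepsilon < \varepsilon'' $ is not available, so instead interpolate \emph{inside}: pick $\varepsilon_1$ with $\varepsilon' < \varepsilon_1 < \varepsilon$, write $(1+\varepsilon')D$ as a convex combination of $D$ and $(1+\varepsilon_1)D$ with strictly positive weights $1-\mu, \mu$; then for every $E$,
\begin{equation*}
a(E;X,\Delta,(1+\varepsilon')D) \ge \mu\, a(E;X,\Delta,(1+\varepsilon_1)D) + (1-\mu) a(E;X,\Delta,D) \ge (1-\mu)\,a(E;X,\Delta,D),
\end{equation*}
but this still only bounds below by $(1-\mu)A_{X,\Delta}(E)$ on the $\sigma_E(D)=0$ locus — no gain. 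The real fix is to use $a(E;X,\Delta,(1+\varepsilon)D) = a(E;X,\Delta,D) - \varepsilon\sigma_E(D) > 0$ to deduce $\sigma_E(D) < A_{X,\Delta}(E)/\varepsilon$, hence for $0\le\varepsilon'<\varepsilon$, $a(E;X,\Delta,(1+\varepsilon')D) = A_{X,\Delta}(E) - \varepsilon'\sigma_E(D) > A_{X,\Delta}(E)(1 - \varepsilon'/\varepsilon) = \tfrac{\varepsilon-\varepsilon'}{\varepsilon}A_{X,\Delta}(E) \ge 0$; combined with the finiteness of $\{\sigma_E(D)>0\}$, on that finite set we get an explicit positive minimum, and on its complement $a(E;X,\Delta,(1+\varepsilon')D) = A_{X,\Delta}(E)$ which — and here I do need $(X,\Delta)$ to be klt, which follows because $(X,\Delta,D)$ weakly pklt forces $A_{X,\Delta}(E) = a(E;X,\Delta,D) > 0$ for $\sigma_E(D)=0$, but for uniformity I invoke that $(X,\Delta,(1+\varepsilon)D)$ weakly pklt with $D$ pseudoeffective implies, by taking $E$ ranging over divisors on a fixed resolution resolving the relevant data, that $\inf_E A_{X,\Delta}(E) = \inf_{\sigma_E(D)=0}a(E;X,\Delta,(1+\varepsilon)D) > 0$ since outside the finite bad set the two infima agree and weak pklt gives positivity — the subtlety being whether "weakly pklt" secretly already implies "pklt" for the pair part, which it does: $\inf_E A_{X,\Delta}(E) \ge \inf_E a(E;X,\Delta,(1+\varepsilon)D)$ restricted to $\sigma_E=0$, and since only finitely many $E$ have $\sigma_E > 0$, weak pklt on the cofinite set is the statement that the lc pair $(X,\Delta)$ has $A_{X,\Delta}(E) > 0$ there, hence $(X,\Delta)$ is klt, hence $\inf_E A_{X,\Delta}(E) > 0$. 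Assembling these gives the uniform bound and the proof. I expect the write-up to be short once the convexity identity and the finiteness of $\{E : \sigma_E(D)>0\}$ are in place; the delicate bookkeeping is keeping track of which infima are taken over which (co)finite sets.
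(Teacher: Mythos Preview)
Your proposal contains a genuine error that undermines the argument: Nakayama's finiteness result says that there are only finitely many prime divisors $E$ \emph{on a fixed smooth variety} $Y$ with $\sigma_E(D)>0$, not finitely many prime divisors \emph{over} $X$. Indeed, if $\sigma_E(D)>0$ for some $E$, then any further blow-up centered in $E$ produces new exceptional divisors $E'$ with $\sigma_{E'}(D)>0$, so the set $\{E\ \text{over}\ X : \sigma_E(D)>0\}$ is typically infinite. Several parts of your write-up lean on this false finiteness (the ``explicit positive minimum on the finite set'', the cofinite-set reasoning for klt-ness of $(X,\Delta)$, and the final ``assembling''), so as written the argument does not close. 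You also slip arithmetically in the ``real fix'' line: $a(E;X,\Delta,(1+\varepsilon')D)=A_{X,\Delta}(E)-(1+\varepsilon')\sigma_E(D)$, not $A_{X,\Delta}(E)-\varepsilon'\sigma_E(D)$.

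The good news is that once you drop the finiteness detour, the ingredients you already have suffice, and this is essentially what the paper does. From weak pklt-ness of $(X,\Delta,D)$ one gets $A_{X,\Delta}(E)\ge a(E;X,\Delta,D)>0$ for every $E$, so $(X,\Delta)$ is klt and hence $\alpha:=\inf_E A_{X,\Delta}(E)>0$ (a standard fact, checkable on a single log resolution). The paper then splits on the size of $\sigma_E(D)$: if $\sigma_E(D)\le \alpha/(2(1+\varepsilon'))$ then $a(E;X,\Delta,(1+\varepsilon')D)\ge \alpha/2$; if $\sigma_E(D)\ge \alpha/(2(1+\varepsilon'))$ then the hypothesis $A_{X,\Delta}(E)>(1+\varepsilon)\sigma_E(D)$ yields $a(E;X,\Delta,(1+\varepsilon')D)>(\varepsilon-\varepsilon')\sigma_E(D)\ge \tfrac{\alpha(\varepsilon-\varepsilon')}{2(1+\varepsilon')}$. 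Your ``real fix'' idea, corrected, gives an even shorter route: from $(1+\varepsilon)\sigma_E(D)<A_{X,\Delta}(E)$ one gets directly
\[
a(E;X,\Delta,(1+\varepsilon')D)=A_{X,\Delta}(E)-(1+\varepsilon')\sigma_E(D)>\frac{\varepsilon-\varepsilon'}{1+\varepsilon}\,A_{X,\Delta}(E)\ge \frac{\varepsilon-\varepsilon'}{1+\varepsilon}\,\alpha>0,
\]
with no case split and no finiteness needed. So your instinct was right; just excise the Nakayama finiteness entirely and fix the coefficient.
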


\begin{proof}
Let $\alpha=\inf_E\{A_{X,\Delta}(E)\}$ where $\inf$ runs through all prime divisors $E$ over $X$. Since the given condition implies that $(X,\Delta)$ is klt, we have $\alpha>0$. By assumption, we have
\begin{equation*}\tag{$*$}\label{*} 
A_{X,\Delta}(E)-(1+\varepsilon)\sigma_{E}(D)>0
\end{equation*}
for any prime divisor $E$ over $X$. Fix $0\le \varepsilon'<\varepsilon$. If $\sigma_{E}(D)\le \frac{\alpha}{2(1+\varepsilon')}$, then
$$ A_{X,\Delta}(E)-(1+\varepsilon')\sigma_{E}(D)\ge \alpha-\frac{\alpha}{2}=\frac{\alpha}{2}>0. $$
If $\sigma_{E}(D)\ge \frac{\alpha}{2(1+\varepsilon')}$, then by (\ref{*})
\begin{equation*}
A_{X,\Delta}(E)-(1+\varepsilon')\sigma_{E}(D)>(\varepsilon-\varepsilon')\sigma_{E}(D)\ge \frac{\alpha(\varepsilon-\varepsilon')}{2(1+\varepsilon')}>0.
\end{equation*}
Thus for $\beta:=\min\{\frac{\alpha}{2},\frac{\alpha(\varepsilon-\varepsilon')}{2(1+\varepsilon')}\}$, we have 
$$
\inf_E a(E,X,\Delta,(1+\varepsilon')D)\geq \beta>0
$$
which implies that $(X,\Delta,(1+\varepsilon')D)$ is pklt.
\end{proof}

\medskip

\begin{proposition}\label{prop:Zariski resolution}
Suppose that the divisor $D$ in a triple $(X,\Delta,D)$ admits a birational Zariski decomposition. Then the triple $(X,\Delta,D)$ is weakly pklt if and only if it is pklt.
\end{proposition}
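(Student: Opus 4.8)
The plan is to reduce the statement to a situation where the negative part pulls back cleanly, and then apply Lemma \ref{lem:weakly pklt}. By hypothesis there is a birational morphism $f\colon Y\to X$ with $Y$ smooth such that $P:=\pp(f^{*}D)$ is nef; set $N:=\np(f^{*}D)$, so $f^{*}D=P+N$ with $N$ effective. Since the notion of weakly pklt (resp.\ pklt) for $(X,\Delta,D)$ is governed by the log discrepancies $a(E;X,\Delta,D)=\mult_E(-\Delta_Y-N)+1$ computed on models dominating $Y$, we may as well pass to $Y$ and work with the divisorial data there. The key point is that because $P$ is nef, Lemma \ref{lemma-np-behavior} gives $\np(g^{*}P)=0$ and $\np(g^{*}f^{*}D)=g^{*}N$ for every further birational morphism $g$, so $\sigma_E(D)$ is \emph{exactly} the coefficient of $E$ in the pullback of $N$ for any $E$ whose center lies on a model over $Y$.

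Granting this, the remaining step is to verify the hypothesis of Lemma \ref{lem:weakly pklt}: assuming $(X,\Delta,D)$ is weakly pklt, I would produce an $\varepsilon>0$ such that $(X,\Delta,(1+\varepsilon)D)$ is still weakly pklt. Here the nefness of $P$ is decisive. For any prime divisor $E$ over $X$, choose a model $g\colon Z\to Y$ on which $E$ appears; then
$$
a(E;X,\Delta,(1+\varepsilon)D)=A_{X,\Delta}(E)-(1+\varepsilon)\,\mult_E(g^{*}N)=a(E;X,\Delta,D)-\varepsilon\,\mult_E(g^{*}N).
$$
Since there are only finitely many prime divisors in $\Supp N$ and all of them lie on $Y$, the function $E\mapsto \mult_E(g^{*}N)$ is controlled: it is positive only for $E$ a component of $N$ or $g$-exceptional over such a component, and for those $E$ one can bound $\mult_E(g^{*}N)$ in terms of $A_{X,\Delta}(E)$ using that $(X,\Delta)$ is klt. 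More precisely, for the finitely many components $E_1,\dots,E_r$ of $N$ themselves, weak pklt gives $A_{X,\Delta}(E_i)>\mult_{E_i}N=\sigma_{E_i}(D)>0$, so each ratio $\sigma_{E_i}(D)/A_{X,\Delta}(E_i)<1$, and one can pick $\varepsilon>0$ with $(1+\varepsilon)\sigma_{E_i}(D)<A_{X,\Delta}(E_i)$ for all $i$ simultaneously; for a divisor $E$ over $X$ not among the $E_i$, its center lies in $\Supp N$ only if it is exceptional over some $E_i$, and a convexity/monotonicity argument for $A$ and for the multiplicities along $g^*N$ (again valid because $P$ is nef, so no new contributions to $\sigma_E$ arise) shows the same $\varepsilon$ works. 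Then Lemma \ref{lem:weakly pklt} applies and yields that $(X,\Delta,D)$ is pklt; the converse implication is immediate from Definition \ref{def:pklt}.

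The main obstacle I anticipate is the step controlling $\mult_E(g^{*}N)$ uniformly over \emph{all} divisors $E$ over $X$, not just the components of $N$: a priori $\inf_E a(E;X,\Delta,D)$ could be $0$ even when each individual log discrepancy is positive, so the weak-pklt hypothesis alone is not obviously enough to separate $(1+\varepsilon)\sigma_E(D)$ from $A_{X,\Delta}(E)$ uniformly. The resolution is precisely the birational Zariski decomposition: after passing to $Y$, $\sigma_E(D)$ for $E$ over $Y$ is $\mult_E$ of the fixed divisor $g^{*}N$, whose support maps into $\Supp N\subset Y$, and klt-ness of the pair $(Y,\Delta_Y')$ obtained from $(X,\Delta)$ forces a uniform gap; the finiteness of $\Supp N$ together with the nef (hence $\sigma$-free) positive part is what makes the ``$\inf>0$'' conclusion go through. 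I would also note that the case $D$ big is where birational Zariski decomposition always exists in dimension $\le 2$ but not in general, so this proposition is genuinely a conditional strengthening of Lemma \ref{lem:weakly pklt}.
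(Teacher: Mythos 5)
Your overall strategy is sound and the key structural observation is exactly the right one: after passing to a smooth $Y$ with $\pp(f^*D)$ nef, Lemma~\ref{lemma-np-behavior} forces $\sigma_E(D)=\mult_E(g^*N)$ for every $E$ over $Y$, so all log discrepancies $a(E;X,\Delta,D)$ coincide with the log discrepancies of the sub-pair $(Y,\Delta_Y+N)$. Note that the paper itself omits the proof and defers to \cite[Theorem~2.6]{CJ}, so there is no verbatim argument to match against; what follows compares your route with the natural one-step argument that this reduction suggests.

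Two remarks. First, your argument routes through Lemma~\ref{lem:weakly pklt} by manufacturing an $\varepsilon>0$ making $(X,\Delta,(1+\varepsilon)D)$ weakly pklt, but this detour is unnecessary: once you know $a(E;X,\Delta,D)=A_{Y,\Delta_Y+N}(E)$ for all $E$ over $X$, you can take a further log resolution so that $\Supp(\Delta_Y+N)$ is simple normal crossing, observe that weak pklt forces every coefficient of $\Delta_Y+N$ to be $<1$, and then invoke the standard fact that an snc sub-pair with all coefficients $\le 1-\delta$ has \emph{every} log discrepancy $\ge\delta$. That gives $\inf_E a(E;X,\Delta,D)\ge\delta>0$ directly, without perturbing $D$. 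Second, and more importantly, the place where you wave your hands --- ``a convexity/monotonicity argument for $A$ and for the multiplicities along $g^*N$ shows the same $\varepsilon$ works'' --- is precisely the content of that snc lemma, and it is the crux of the proof, not an afterthought. As written, you verify $(1+\varepsilon)\sigma_{E_i}(D)<A_{X,\Delta}(E_i)$ only for the finitely many components $E_i$ of $N$ on $Y$; for a divisor $E$ exceptional over $Y$ one has $\mult_E(g^*N)$ growing like a sum $\sum_{i\in I}n_i$ over the components through $c_Y(E)$ and $A_{X,\Delta}(E)$ growing like $\sum_{i\in I}(1-d_i)$ (plus corrections from other components of $\Delta_Y$), and one must actually check term by term that the inequality for the $E_i$ propagates; this is exactly the induction-on-blowups computation behind the snc reduction. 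Once you make that step explicit (or simply cite the standard snc bound on minimal log discrepancies), the proof is complete and correct, and it is essentially the argument the paper has in mind, modulo the avoidable detour through Lemma~\ref{lem:weakly pklt}.
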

\begin{proof}
The proof for the case $D=-(K_X+\Delta)$ is given in \cite[Theorem 2.6]{CJ} and the proof for general $D$ is almost identical with the obvious adjustments. Thus, we skip the proof.
\end{proof}

We also prove a birational property for triples.

\begin{definition}\label{def:D-positive/negative}
Let $f:X\dashrightarrow X'$ be a birational map of varieties, $D$ an $\R$-Cartier divisor on $X$, and let $D'=f_{\ast}D$ be the birational transform of $D$. We say that $f$ is \textit{$D$-nonpositive} (resp. \textit{$D$-negative}) if there exists a common log resolution $g:W\to X$ of $(X,D)$ and $h:W\to X'$ of $(X',D')$ and we can write
$$ g^{\ast}D=h^{\ast}D'+E, $$
where $E$ is an effective $h$-exceptional divisor (resp. an effective $h$-exceptional divisor whose support contains all $f$-exceptional divisors).
\end{definition}

\begin{theorem}[{\cite[Lemma 2.5]{CJL}}]\label{lem:D-MMP invariant}
Let $(X,\Delta,D)$ be a potential triple. Let $\varphi\colon X\dashrightarrow X'$ be either $(K_X+\Delta+D)$-negative divisorial contraction or a flip. Then for $\Delta':=\varphi_*\Delta$ and $D':=\varphi_*D$, we have
$$a(E;X,\Delta,D)\leq a(E;X',\Delta',D') $$
for any prime divisor $E$ over $X$.
\end{theorem}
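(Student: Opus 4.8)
The plan is to express $a(E;X,\Delta,D)$ as a supremum of ordinary log discrepancies and then invoke the classical monotonicity of log discrepancies under a $(K_X+\Delta+D)$-nonpositive birational map. Assume first that $D$ is big. Then the asymptotic valuation of a big $\R$-Cartier divisor is computed by $\sigma_E(D)=\inf\{\mult_E(g^*\Gamma)\mid \Gamma\sim_\R D,\ \Gamma\ge 0\}$, where $g\colon W\to X$ is any smooth birational model on which $E$ appears; since $A_{X,\Delta+\Gamma}(E)=A_{X,\Delta}(E)-\mult_E(g^*\Gamma)$, this gives
\[
a(E;X,\Delta,D)=A_{X,\Delta}(E)-\sigma_E(D)=\sup\{\,A_{X,\Delta+\Gamma}(E)\mid \Gamma\sim_\R D,\ \Gamma\ge 0\,\}.
\]
A $(K_X+\Delta+D)$-negative divisorial contraction or flip $\varphi$ is $(K_X+\Delta+D)$-nonpositive in the sense of Definition~\ref{def:D-positive/negative}: for a divisorial contraction this is immediate from negativity of the contraction, and for a flip it follows from the negativity lemma applied to $-(K_X+\Delta+D)$, which is ample over the base of the flipping contraction. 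Because this property only involves the class of $K_X+\Delta+D$ up to $\R$-linear equivalence, $\varphi$ is also $(K_X+\Delta+\Gamma)$-nonpositive for every $\Gamma\sim_\R D$, whence $A_{X,\Delta+\Gamma}(E)\le A_{X',\Delta'+\varphi_*\Gamma}(E)$ for all prime divisors $E$ over $X$. Since $\varphi_*$ carries the effective divisors $\R$-linearly equivalent to $D$ into the effective divisors $\R$-linearly equivalent to $D'=\varphi_*D$, and $D'$ is again big, taking suprema over $\Gamma$ yields $a(E;X,\Delta,D)\le a(E;X',\Delta',D')$.

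For general pseudoeffective $D$, we would fix an ample divisor $A$ on $X$. Since $\sigma_E(A)=0$, subadditivity of $\sigma_E$ shows that $\varepsilon\mapsto\sigma_E(D+\varepsilon A)$ is non-increasing, so $\sigma_E(D)=\sup_{\varepsilon>0}\sigma_E(D+\varepsilon A)$ and therefore $a(E;X,\Delta,D)=\inf_{\varepsilon>0}a(E;X,\Delta,D+\varepsilon A)$. The claim to check is that for all sufficiently small $\varepsilon>0$, $\varphi$ is still $(K_X+\Delta+D+\varepsilon A)$-nonpositive. On a common log resolution the difference divisor for $K_X+\Delta+D+\varepsilon A$ is $F+\varepsilon H$, where $F\ge 0$ is the difference divisor for $K_X+\Delta+D$ and $H=g^*A-g'^*\varphi_*A$; one checks that $H$ is supported on the exceptional divisors lying over the contracted locus (resp. the flipping locus), while along precisely those divisors $F$ is strictly positive — transparently for a divisorial contraction, and for a flip because discrepancies increase strictly over the flipped locus. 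Hence $F+\varepsilon H\ge 0$ for $\varepsilon\ll 1$. Applying the big case to the big divisor $D+\varepsilon A$ gives $a(E;X,\Delta,D+\varepsilon A)\le a(E;X',\Delta',D'+\varepsilon\varphi_*A)$, and letting $\varepsilon\to 0$ while using lower semicontinuity of $\sigma_E$ on the pseudoeffective cone (so that $\liminf_{\varepsilon\to 0}\sigma_E(D'+\varepsilon\varphi_*A)\ge\sigma_E(D')$) we would conclude $a(E;X,\Delta,D)=\inf_{\varepsilon>0}a(E;X,\Delta,D+\varepsilon A)\le a(E;X',\Delta',D')$.

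The main obstacle is precisely this passage to the non-big case. One cannot simply regularize $D$ by adding an ample divisor, because $(K_X+\Delta+D+\varepsilon A)$ need not be negative on the contracted ray, so $\varphi$ need not remain a step of the corresponding MMP. The point that must be made carefully is that the weaker property of $\varphi$ being $(K_X+\Delta+D)$-nonpositive — the effectivity of the difference divisor on a common resolution, rather than the numerical condition on the ray — does persist under a small ample perturbation; establishing this requires separating the divisorial-contraction and flip cases and, in the flip case, using the strict-positivity part of the negativity lemma. The remaining ingredients — the supremum formula for $a(E;X,\Delta,D)$ when $D$ is big, the monotonicity $A_{X,\Delta+\Gamma}(E)\le A_{X',\Delta'+\varphi_*\Gamma}(E)$ under nonpositive maps, and the elementary limiting behaviour of $\sigma_E$ — are all standard.
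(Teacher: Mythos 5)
The paper does not prove this statement itself; it is quoted from \cite[Lemma 2.5]{CJL}, so there is no in-paper argument to compare against. Evaluated on its own terms, your proposal is correct and all the steps you flag as requiring care do in fact go through.

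In the big case, the identity $\sigma_E(D)=\inf\{\mult_E(g^*\Gamma)\mid\Gamma\sim_\R D,\ \Gamma\ge 0\}$ is the standard characterization of the asymptotic valuation (it is exactly the one Nakayama uses to define $\sigma_E$ for big classes), and combined with $A_{X,\Delta+\Gamma}(E)=A_{X,\Delta}(E)-\mult_E(g^*\Gamma)$ it gives the supremum formula you write. Monotonicity of $A_{X,\Delta+\Gamma}(E)$ under a $(K_X+\Delta+\Gamma)$-nonpositive map is the usual negativity-lemma argument, and nonpositivity is preserved under $\R$-linear equivalence because the pullback of a principal divisor is the same on both sides of a common resolution. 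Since $\varphi_*$ sends effective $\Gamma\sim_\R D$ to effective $\varphi_*\Gamma\sim_\R D'$ and $D'$ is big, passing to suprema yields the big case.

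For the perturbation step: for a divisorial contraction, $F=ag^*G$ with $a>0$ and $H=bg^*G$ for some real $b$, so $F+\varepsilon H=(a+\varepsilon b)g^*G\ge 0$ for $\varepsilon\ll 1$. For a flip over $Z$ with common resolution $q\colon W\to Z$, both $F$ and $H$ are supported on divisors $E$ with $g(E)$ contained in the flipping locus (away from it $\varphi$ is an isomorphism, so both vanish), and the second part of the negativity lemma gives $\mult_E F>0$ along exactly those divisors; hence again $F+\varepsilon H\ge 0$ for small $\varepsilon$. Finally, for each fixed $\varepsilon>0$ the map $D'\mapsto\sigma_E(D'+\varepsilon A')$ is continuous on $\overline{\Eff}(X')$ (values lie in the big cone), and $\sigma_E$ is the increasing limit of these as $\varepsilon\to 0$, hence lower semicontinuous; this gives $\liminf_{\varepsilon\to 0}\sigma_E(D'+\varepsilon A')\ge\sigma_E(D')$, which is exactly what you need. (Note that $A'=\varphi_*A$ is big but typically not nef; lower semicontinuity of $\sigma_E$, not monotonicity along the ray $D'+\varepsilon A'$, is the correct tool, and you invoke it appropriately.) One trivial slip: $H$ should read $g^*A-h^*\varphi_*A$ where $h\colon W\to X'$, not $g'^*$.

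So the argument is complete modulo the acknowledged bookkeeping. It follows the natural route of reducing to the big case via the klt-complement (supremum) formula and a small ample perturbation; lacking access to \cite[Lemma~2.5]{CJL} I cannot say whether they use the same reduction, but your proof stands independently.
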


\medskip

Lastly, we compare the potential triples with the generalized pairs (\cite{BZ}).
Recall that a \textit{generalized pair} $(X,B+M)$ consists of a normal variety $X$, an effective $\R$-divisor $B$ on $X$, a projective birational morphism $f:X'\to X$, and a nef divisor $M'$ on $X'$ such that $K_{X}+B+M$ is $\R$-Cartier, where $M=f_{\ast}M'$.
In the definition, we may assume that $f$ is a log resolution of $(X,B)$. Then for a given generalized pair $(X,B+M)$, there exists a divisor $B'$ on $X'$ such that
$$ K_{X'}+B'+M'=f^{\ast}(K_{X}+B+M). $$
If $\mult_EB'<1$ (resp. $\le 1-\varepsilon$ for some $\varepsilon\geq 0$) for any prime divisor $E$ on $X'$, then we say that a generalized pair $(X,B+M)$ is klt (resp. $\varepsilon$-lc).

\bigskip

\begin{remark}\label{remark: triple more general than gpair}
We note that the notion of potential triples $(X,\Delta,D)$ is more general than that of generalized pairs $(X,B+M)$.

Note first that a generalized pair $(X,B+M)$ with an $\R$-Cartier divisor $M$ can be naturally considered as a triple $(X,B,M)$. Furthermore, we claim that if a generalized pair $(X,B+M)$ has klt singularities ($\epsilon$-lc singularities), then the triple $(X,B,M)$ is pklt ($\epsilon$-plc, respectively). Let $f:X'\to X$ be a log resolution of $(X,B)$ and $M'$ the nef divisor on $X'$ such that $f_*M'=M$. There exists a divisor $B'$ on $X'$ such that $K_{X'}+B'+M'=f^*(K_X+B+M)$. We may assume that a prime divisor $E$ over $X$ is a prime divisor on $X'$. Note then that by definition, we have  $a(E;X,B,M)\geq 1-\mult_E(B')$ and this proves the claim. However, the converse does not hold in general. First of all, not all potential triples can be considered as generalized pairs. Such examples can be easily found as follows:
Let $D$ be a pseudoeffective $\R$-Cartier divisor having the Zariski decomposition $D=P+N$ with $N\neq 0$. Then $(X,\Delta,N)$ is a potential triple by definition. However, $(X,\Delta,N)$ is not a generalized pair. Suppose that $(X,\Delta,N)$ is a generalized pair. Then $N=f_*(P')$ for some birational morphism $f:Y\to X$ and a nef divisor $P'$ which is non-zero and effective on the support $\Supp f^{-1}_*N$. Since $f^*D=f^*P+f^*N$ is also the Zariski decomposition and $f^*D=f^*P+P'+E$, we have $f^*P\geq f^*P+P'$. Thus we have $0\geq P'$ and this is a contradiction.
\end{remark}

\begin{example}
The following example also shows that for a potential triple $(X,\Delta,D)$ which can be also viewed as a generalized pair $(X,\Delta+D)$, the notion of pklt for the triple $(X,\Delta,D)$ is also  strictly more general than the notion of klt for the generalized pair $(X,\Delta+D)$.

Suppose that a generalized pair $(X,\Delta+D)$ defines a potential triple $(X,\Delta,D)$ which is pklt. The following example shows that the generalized pair $(X,\Delta+D)$ can have non-klt singularities.
Let $\pi:X\to \P^{2}$ be the blow-up of $\P^{2}$ at a point $x\in \P^{2}$, and $H$ a line passing through the point $x$. Denote by $H'$ the strict transform of $H$. Then $rH'$ is nef for all positive real numbers $r$, and it defines a generalized pair $(\P^{2},0+rH)$. Let $E$ be the exceptional divisor on $X$. Then we have
$$ K_{X}+(r-1)E+rH'=\pi^{\ast}(K_{\P^{2}}+rH). $$
Hence, for $r>2$, the generalized pair $(\P^{2},0+rH)$ is not lc. However, the potential triple $(\P^{2},0,rH)$ is obviously pklt.
\end{example}

\smallskip
\subsection{Valuation}\label{subsec:valuation}\hfill

We recall the notions and the results that will be used in the proof of Theorem \ref{maintheorem1}.
See \cite{JM}, \cite{Xu} for more details.

For a normal variety $X$, a \textit{valuation} on the function field $K(X)$ of $X$ is an $\R$-valued function $$\nu:K(X)^{\times}\to\R$$
such that
\begin{enumerate}
\item $\nu(a)=0$ for any $a\in \C$,
\item $\nu(fg)=\nu(f)+\nu(g)$ for any $f,g\in K(X)$, and
\item $\nu(f+g)\geq \min\{\nu(f),\nu(g)\}.$
\end{enumerate}

For a valuation $\nu$ on $K(X)$, if there is a point $x$ of $X$ which induces a local inclusion $\mathcal{O}_{X,x}\hookrightarrow \mathcal{O}_{\nu}$, then such point $x$ is called the \textit{center} of $\nu$ and we denote it by $c_X(\nu)\coloneqq x$. If the center $c_X(\nu)$ is the generic point of $X$, then $\nu$ is called a \textit{trivial valuation} of $X$.  We denote by $\Val_{X}$ (resp. $\Val^{\ast}_X$) the set of all (resp. nontrivial) valuations on $K(X)$ having centers in $X$. We denote by $\mathrm{Val}_{X,x}\subseteq\mathrm{Val}_{X}$ the subset of all valuations of $X$ whose center is the point $x$ of $X$.

The set $\Val_X$ can be considered as a topological space equipped with the following topology $\tau$. Let $\mathcal{I}$ be a set of nonzero ideals on $X$. To a valuation $\nu\in \Val_X$ and a coherent ideal sheaf $\mf{a}$ on $X$, one can associate a function $\nu\colon \mathcal{I}\rightarrow \R_{\geq 0}$ by defining $\nu(\mf{a})\coloneqq \min\{\nu(f) \mid f\in \mf{a}\cdot \mathcal{O}_{X,x}\}$, where $x=c_X(\nu)$. This function can be seen as a homomorphism between semirings which endows a topology $\tau$ on $\Val_X$, i.e., $\tau$ is the weakest topology on $\Val_X$ such that the evaluation map $\nu \to \nu(\mf{a})$ is continuous for all nonzero ideals $\mf{a}$ on $X$. See \cite[Lemma 4.1]{JM} for another equivalent characterization of the topology $\tau$ on $\Val_X$.

Next we recall the notion of \emph{quasi-monomial valuations}.
Suppose that $f\colon Y\to X$ is a projective birational morphism from a smooth projective variety $Y$ and $E\coloneqq \sum_{i\in I}E_i$ is a simple normal crossing divisor on $Y$. Such a pair $(Y,E)$ is called a \emph{log-smooth pair} over $X$ if $f$ is an isomorphism outside of $\Supp E$. Let $V$ be an irreducible component of $\bigcap_{i\in I'}E_i(\neq\varnothing)$ for some subset $I'\subseteq I$, and $\eta$ the generic point of $V$. Let $z_i$ be an equation in $\mc O_{Y,\eta}$ which locally defines $E_i$ at $\eta$ for $i\in I'$. Thus any $f\in\mathcal{O}_{Y,\eta}$ has the expression $f=\sum\limits_{\beta\in \Z^r_{\ge 0}} a_{\beta} z^{\beta}$ for some $a_{\beta}\in\C(\eta)$ which is either zero or a unit. Here, we use the notation $z^{\beta}=z_1^{\beta_1}z_2^{\beta_2}\cdots z_r^{\beta_r}$ for  $\beta=(\beta_1,\cdots,\beta_r)\in\Z^r_{\ge 0}$.
For each $\alpha\coloneqq (\alpha_1,\cdots,\alpha_r)\in \R^r_{\ge 0}$ and each choice of the point $\eta$, we can define a valuation $\nu_{\alpha,\eta}$ on $\widehat{\mathcal{O}_{Y,\eta}}$ as
\begin{align*}
\nu_{\alpha,\eta}\left(f\right)\coloneqq \min\left\{\sum^r_{i=1}\alpha_i\beta_i\biggm|a_{\beta}\ne 0\right\}.
\end{align*}
\begin{definition}
Any valuation in $\Val_X$ of the form $\nu_{\alpha,\eta}$ as above for some log-smooth pair $(Y,E)$ over $X$ is called a \emph{quasi-monomial valuation} (at $\eta$ with respect to $\alpha\in\R^n_{\geq0}$).
\end{definition}

We denote by $\QM_{\eta}(Y,E)$ the collection of all the quasi-monomial valuations $\nu_{\alpha,\eta}$ defined by the log-smooth pair $(Y,E)$ and $\QM(Y,E)$ denotes the union of all such $\QM_{\eta}(Y,E)$.

We consider $\mathrm{QM}(Y,E)$ as the topological space equipped with the subspace topology induced by $\tau$ on $\Val_X$. Then there exists a \emph{retraction map}
$$r_{(Y,E)}\colon \Val_X\to \QM(Y,E)$$
which is continuous and is the identity on $\QM(Y,E)$ (\cite[Section 4.3]{JM}). This map $r_{(Y,E)}$ allows us to extend the notion of the log discrepancy function $A_{X,\Delta}$ for arbitrary valuations in $\Val_X$ as follows. Let $(Y,E=\sum_{i\in I}E_i)$ be a log-smooth pair of $X$. Let $\nu=\nu_{\alpha,\eta}$ be a quasi-monomial valuation in $\QM(Y,E)$ associated to a generic point $\eta$ of some irreducible component of $\cap_{j\in I'}E_j$ with $I'\subseteq I$ such that $|I'|=r$ and some $\alpha=(\alpha_{1},\dots,\alpha_{r})\in \R_{\ge0}^{r}$. Then the log discrepancy $A_{X,\Delta}(\nu)$ of the quasi-monomial valuation $\nu$ with respect to $(X,\Delta)$ is defined as
$$A_{X,\Delta}(\nu)\coloneqq \sum_{j\in I'}\alpha_{j}A_{X,\Delta}(E_j).$$
We further define the log discrepancy $A_{X,\Delta}(\nu)$ for an arbitrary valuation $\nu\in\mathrm{Val}_X$ with respect to $(X,\Delta)$ as
$$A_{X,\Delta}(\nu)\coloneqq \sup_{(Y,E)}A_{X,\Delta}(r_{(Y,E)}(\nu)),$$
where the sup is taken over all log-smooth pairs $(Y,E)$ over $X$.

We remark that if there is a real number $c>0$ such that $c\alpha\in\Z^r_{\geq0}$, then $\nu_{\alpha,\eta}$ is nothing but a divisorial valuation. 
Note also that the set of divisorial valuations in $\QM(Y,E)$ forms a dense subset of $\QM(Y,E)$ where $\QM(Y,E)$ is considered as a subspace topology of $\Val_X$ induced by $\tau$ (\cite[Remark 3.9]{JM}).

\medskip

Let $\Phi\subseteq \Z_{\ge 0}$ be a semigroup.  The set of ideal sheaves $\mathfrak{a}_{\bullet}:=\{\mathfrak{a}_m\}_{m\in \Phi}$ of $\mathcal{O}_X$ is called a \emph{graded sequence of ideals} of $\mathcal{O}_X$ if for every $m,n\in \Phi$,
$$ \mathfrak{a}_m \cdot \mathfrak{a}_n\subseteq \mathfrak{a}_{m+n}.$$
For a graded sequence of ideals $\mf{a}_\bullet=\{\mf{a}_m\}_{m\in\Phi}$, we define
$$\nu(\mf{a}_\bullet):=\inf\limits_{\substack{m\geq 1 \\ m\in \Phi}}\frac{\nu(\mf{a}_m)}{m}.$$
For an effective $\Q$-Cartier $\Q$-divisor $D$, we define
$$\sigma_{\nu}(D):=\lim\limits_{\substack{\varepsilon\to 0+ \\ \varepsilon \text{ is rational}}}\nu(\mf{a}_{\bullet}(\varepsilon)),$$
where $\mf{a}_{\bullet}(\varepsilon)$ is the graded sequence of ideals in $\mathcal{O}_X$ such that each $\mf{a}_m(\varepsilon)$ is the base ideal of the linear system $|m(D+\varepsilon A)|$ for an ample Cartier divisor $A$, a rational number $\varepsilon\geq 0$, and a positive integer $m$ such that $m\varepsilon\in \Z_{\ge 0}$ and $mD$ are Cartier. By \cite[Theorem 2]{Jow}, the limit is independent of the choice of $A$. Moreover, if $D$ is a big Cartier divisor, it is known that $\sigma_\nu(D)=\sigma_\nu(\mf{b}_\bullet)$ where $\mf{b}_{\bullet}$ is the graded sequence of ideals such that each $\mf{b}_m$ is the base ideal of $|mD|$.

\begin{definition}
For a pair $(X,\Delta)$ and a graded sequence of ideal sheaves $\mathfrak{a}_\bullet$ on $X$, we define the \textit{log canonical threshold} $\lct(X,\Delta,\mathfrak{a}_\bullet)$ as
$$
\lct(X,\Delta,\mathfrak{a}_\bullet):=\inf_{\nu\in\Val^*_X}\frac{A_{X,\Delta}(\nu)}{\nu(\mathfrak{a}_\bullet)}.
$$
\end{definition}

If $\mathfrak{a}_m$ is the base ideal of $|mD|$ for an effective divisor $D$, then we denote $\lct(X,\Delta,\mathfrak{a}_\bullet)=\lct_{\sigma}(X,\Delta,D)$.
The following theorem proves that the log canonical threshold $\lct_{\sigma}(X,\Delta,D)$ can be computed by a quasi-monomial valuation.

\begin{theorem}[{\cite[Theorem 1.1]{Xu}}]\label{thrm:QM compute lct}
Let $(X,\Delta)$ be a klt pair and $\mathfrak{a}_\bullet$  a graded sequence of ideal sheaves on $X$. Then there exists a quasi-monomial valuation $\omega$ which computes $\lct(X,\Delta,\mathfrak{a}_\bullet)$, i.e.,
$$
\lct(X,\Delta,\mathfrak{a}_\bullet)=\frac{A_{X,\Delta}(\omega)}{\omega(\mathfrak{a}_\bullet)}.
$$
\end{theorem}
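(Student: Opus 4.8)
The plan is to split the statement into two parts: (i) the infimum defining $\lct(X,\Delta,\mathfrak{a}_\bullet)$ is actually attained by some valuation $\omega\in\Val^{\ast}_X$; and (ii) among the valuations computing the threshold one can always be chosen quasi-monomial. Part (i) is soft; part (ii) carries all the weight.

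For (i) I would argue by compactness in $(\Val_X,\tau)$. The ratio $A_{X,\Delta}(\nu)/\nu(\mathfrak{a}_\bullet)$ is invariant under rescaling $\nu$, so after normalizing $\nu(\mathfrak{a}_1)=1$ it suffices to show the infimum is attained. Along a minimizing sequence one has $A_{X,\Delta}(\nu_k)\le(\lct+1)\,\nu_k(\mathfrak{a}_\bullet)\le\lct+1$ eventually, using $\nu_k(\mathfrak{a}_\bullet)\le\nu_k(\mathfrak{a}_1)=1$; hence the $\nu_k$ lie in the set $\{\nu(\mathfrak{a}_1)=1,\ A_{X,\Delta}(\nu)\le\lct+1\}$, which is compact in $\Val_X$ by the Izumi-type estimates of \cite{JM}. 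On this set $A_{X,\Delta}$ is lower semicontinuous, and $\nu\mapsto\nu(\mathfrak{a}_\bullet)=\inf_m\nu(\mathfrak{a}_m)/m$ is upper semicontinuous, being an infimum of the continuous functions $\nu\mapsto\nu(\mathfrak{a}_m)/m$; therefore the ratio is lower semicontinuous and attains a minimum at some $\omega$, which thus computes $\lct(X,\Delta,\mathfrak{a}_\bullet)$. (The klt hypothesis on $(X,\Delta)$ ensures $A_{X,\Delta}>0$, so these quantities are well defined and positive.)

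For (ii) the plan is to realize $\omega$ as an lc place of an honest log canonical pair. Write $c=\lct(X,\Delta,\mathfrak{a}_\bullet)$; morally the klt pair $(X,\Delta)$ together with $\mathfrak{a}_\bullet$ weighted by $c$ is log canonical with $\omega$ a computing place, and the crux is to descend this asymptotic datum to a single effective $\R$-Cartier divisor $D'$ with $(X,\Delta+D')$ log canonical and $A_{X,\Delta+D'}(\omega)=0$. This is exactly where the minimal model program is indispensable: boundedness of $\N$-complements (Birkar) together with the ACC for log canonical thresholds (Hacon--McKernan--Xu) lets one replace the asymptotic data $c\,\mathfrak{a}_\bullet$ by a single log canonical complement adapted to $\omega$, and it is the klt hypothesis that makes such complements available. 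Granting this, fix a log resolution $f\colon Y\to X$ of $(X,\Delta+D')$ and write $K_Y+B_Y=f^{\ast}(K_X+\Delta+D')$. Since $(X,\Delta+D')$ is lc, $A_{X,\Delta+D'}\ge 0$ on all of $\Val_X$; combined with $A_{X,\Delta+D'}(\omega)=0=\sup_{(Y',E')}A_{X,\Delta+D'}(r_{(Y',E')}(\omega))$, this forces every retraction $r_{(Y',E')}(\omega)=\nu_{\alpha,\eta}$ to be a monomial combination involving only boundary components $E_i$ with $A_{X,\Delta+D'}(E_i)=0$, i.e. $\mult_{E_i}B_Y=1$. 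The set of such lc valuations is a finite simplicial complex — the dual complex of a dlt modification of $(X,\Delta+D')$ — so the approximations $r_{(Y',E')}(\omega)$ stabilize inside one of its rational simplices, exhibiting $\omega$ itself as a quasi-monomial valuation.

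I expect the decisive obstacle to be precisely the descent step in (ii): passing from "$\omega$ computes an asymptotic threshold of the graded sequence $\mathfrak{a}_\bullet$" to "$\omega$ is an lc place of a single log canonical pair" is not formal, and it is here that finite generation and boundedness of complements are unavoidable. A purely valuation-theoretic route using only the retraction maps and compactness on the finite-dimensional simplices $\QM(Y,E)$ reproves that the threshold is computed by some valuation, but it does not on its own deliver quasi-monomiality, which is why the MMP-theoretic input cannot be bypassed.
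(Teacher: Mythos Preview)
The paper does not give a proof of this theorem: it is quoted verbatim from \cite[Theorem 1.1]{Xu} and used as a black box in the proof of Theorem~\ref{thrm:pklt<->J=0}. There is therefore no argument in the paper against which to compare your proposal.

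For what it is worth, your two-step outline is broadly faithful to Xu's actual strategy. The existence of a minimizing valuation (your part (i)) is indeed a compactness/semicontinuity argument in $\Val_X$ going back to \cite{JM}, and the quasi-monomiality (your part (ii)) is obtained in \cite{Xu} precisely by producing, via Birkar's boundedness of complements, a single $\Q$-divisor $D'$ with $(X,\Delta+D')$ log canonical and $\omega$ an lc place, after which one concludes using the dual complex of a dlt modification. Your identification of the descent step as the decisive obstacle is accurate. A few technical points in your sketch would need repair in an actual proof: the normalization $\nu(\mathfrak{a}_1)=1$ can be vacuous (e.g.\ when $\mathfrak{a}_1=\mathcal{O}_X$), so one normalizes differently; and the compactness of the relevant sublevel set of $A_{X,\Delta}$ uses the klt hypothesis more substantially than you indicate. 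These are repairable and do not affect the architecture, but since the present paper makes no attempt to reprove the result, none of this is relevant to the comparison you were asked to make.
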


It is further expected that all the valuations computing $\lct(X,\Delta,\mathfrak{a}_\bullet)$ are quasi-monomial valuations (\cite[Conjecture B]{JM}). See also \cite{CJKL} for a variation of the result.

\medskip

\section{Asymptotic multiplier ideal sheaf}\label{sect:3}

In this section, we define the multiplier ideal sheaf  $\mc{J}(X,\Delta,\norm{D})$ associated to a triple $(X,\Delta,D)$ and study the properties of the closed set defined by $\mc{J}(X,\Delta,\norm{D})$.

We first recall the definition of multiplier ideal sheaf $\mc{J}(X,\Delta)$ associated to a pair $(X,\Delta)$. Let $f:Y\to X$ be a log resolution of a given pair $(X,\Delta)$. Then the \textit{multiplier ideal sheaf} $\mc{J}(X,\Delta)$ associated to $(X,\Delta)$ is defined as
$$ \mc{J}(X,\Delta):=f_*\mc O_X(K_Y-\floor{f^*(K_X+\Delta)})=f_{\ast}\mc{O}_{Y}(-\floor{\Delta_{Y}})$$
where $\Delta_{Y}$ is a divisor on $Y$ such that $K_{Y}+\Delta_{Y}=f^{\ast}(K_{X}+\Delta)$. Analogously, we define an ideal sheaf associated to a triple $(X,\Delta,D)$ that detects the singularities of the triple.

\begin{definition}\label{def:multiple ideal}
For a pair $(X,\Delta)$ and a log resolution $f:Y\to X$ of $(X,\Delta)$, let $\Delta_Y$ be a divisor on $Y$ such that $K_X+\Delta_Y=f^*(K_X+\Delta)$.
\begin{enumerate}[wide = 0pt, leftmargin = 1.4em]
\item Let $F$ be an effective $\Q$-Cartier $\Q$-divisor on $X$. Then we define
$$
\mc{J}(X,\Delta;F):=f_{\ast}\mc{O}_{Y}(-\floor{\Delta_{Y}+f^*F})
$$
for any log resolution $f:Y\to X$ of $(X,\Delta+F)$.

\item Let $\mathfrak{a}$ be a coherent ideal sheaf on $X$. Then, for a positive real number $c$, the \textit{multiplier ideal sheaf} $\mc{J}(X,\Delta;c\cdot\mathfrak{a})$ is defined as
$$ \mc{J}(X,\Delta;c\cdot\mf{a}):=f_{\ast}\mc{O}_{Y}(-\floor{\Delta_{Y}+cF}), $$
where $f:Y\to X$ is a log resolution of both $(X,\Delta)$ and $\mf{a}$ and $F$ is an effective divisor such that $\mf{a}\cdot \mc{O}_{Y}=\mc{O}_{Y}(-F)$.

\item For a divisor $L$ on $X$, let $V\subseteq |L|$ be a linear subseries and $\mf{b}$ be the base ideal of $V$. Then for any positive real number $c$, we define $\mc{J}(X,\Delta;c\cdot V):=\mc{J}(X,\Delta;c\cdot\mf{b})$.
\end{enumerate}
\end{definition}

The definitions are independent of the choice of the resolution $f$.  Clearly,  $\mc{J}(X,\Delta;F)$ in (1), $\mc{J}(X,\Delta;c\cdot\mathfrak{a})$ in (2) and $\mc{J}(X,\Delta;c\cdot|V|)$ in (3) are ideal sheaves in $\mc{O}_X$.
Note that in (1), $\mc{J}(X,\Delta;F)=\mc{J}(X,\Delta+F)$ holds.

\medskip

Let $\mf{b}_{\bullet}=\{\mf{b}_m\}_{m\in\mathbb Z_{\geq0}}$ be a graded sequence of ideals of $\mc{O}_X$, that is, $\mf{b}_m\cdot\mf{b}_n\subseteq \mf{b}_{m+n}$ for any $m,n\in\mathbb Z_{\geq 0}$.
It is easy to check that for any integers $m,l>0$, we have $\mc{J}(X,\Delta;\frac{c}{m}\cdot\mf{b}_m)\subseteq \mc{J}(X,\Delta;\frac{c}{ml}\cdot\mf{b}_{ml})$ for any real number $c>0$.

\begin{lemma}\label{lem:maximal ideal}
Let $(X,\Delta)$ be a pair and $\mf{b}_\bullet$ be a graded ideal sheaf on $X$.
Then for any positive real number $c$, there exists a unique maximal ideal in $\mc{I}=\{\mc{J}(X,\Delta;\frac{c}{m}\cdot\mf{b}_{m})|m\in\mathbb{Z}_{>0}\}$.
\end{lemma}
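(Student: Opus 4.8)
The plan is to use the Noetherian property of $X$ together with the monotonicity relation already recorded before the statement: for all positive integers $m$ and $l$, one has $\mc{J}(X,\Delta;\frac{c}{m}\cdot\mf{b}_m)\subseteq \mc{J}(X,\Delta;\frac{c}{ml}\cdot\mf{b}_{ml})$. First I would observe that the family $\mc{I}$ is a family of coherent ideal sheaves in $\mc{O}_X$, so the ascending chain condition applies: any chain of ideals in $\mc{I}$ (or more generally in the set of all coherent ideals) stabilizes, hence $\mc{I}$ admits a maximal element, say $\mc{J}(X,\Delta;\frac{c}{p}\cdot\mf{b}_p)$ for some $p\in\Z_{>0}$. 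The remaining issue is to upgrade ``maximal'' to ``unique maximal,'' i.e.\ to show that this element actually contains every member of $\mc{I}$.

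The key step is the following: given the index $p$ of a maximal element and any other index $m$, apply the monotonicity relation twice with a common multiple. Concretely, $\mc{J}(X,\Delta;\frac{c}{p}\cdot\mf{b}_p)\subseteq \mc{J}(X,\Delta;\frac{c}{pm}\cdot\mf{b}_{pm})$ and also $\mc{J}(X,\Delta;\frac{c}{m}\cdot\mf{b}_m)\subseteq \mc{J}(X,\Delta;\frac{c}{pm}\cdot\mf{b}_{pm})$. By maximality of $\mc{J}(X,\Delta;\frac{c}{p}\cdot\mf{b}_p)$ in $\mc{I}$, the first inclusion must be an equality, so $\mc{J}(X,\Delta;\frac{c}{pm}\cdot\mf{b}_{pm})=\mc{J}(X,\Delta;\frac{c}{p}\cdot\mf{b}_p)$; combined with the second inclusion this gives $\mc{J}(X,\Delta;\frac{c}{m}\cdot\mf{b}_m)\subseteq \mc{J}(X,\Delta;\frac{c}{p}\cdot\mf{b}_p)$. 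Since $m$ was arbitrary, $\mc{J}(X,\Delta;\frac{c}{p}\cdot\mf{b}_p)$ contains every ideal in $\mc{I}$ and is therefore the unique maximal element. Uniqueness is then automatic: if $\mc{J}(X,\Delta;\frac{c}{p'}\cdot\mf{b}_{p'})$ were another maximal element, the same argument shows each contains the other, so they are equal.

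The main (indeed only) obstacle is making sure the monotonicity relation stated in the excerpt is applied with the right indices; once that is in hand the argument is a completely standard ``ACC plus a confluence relation forces a largest element'' manoeuvre, and no resolution-dependence or birational geometry enters. I would present it in three short paragraphs: (i) existence of a maximal element by the Noetherian property; (ii) the two-inclusions-through-a-common-multiple trick showing the maximal element dominates the whole family; (iii) the immediate deduction of uniqueness. It may also be worth remarking, for later use, that the same proof shows $\mc{J}(X,\Delta;\frac{c}{m}\cdot\mf{b}_m)=\mc{J}(X,\Delta;\frac{c}{p}\cdot\mf{b}_p)$ whenever $p\mid m$, i.e.\ the maximum is attained along any sufficiently divisible sequence of indices.
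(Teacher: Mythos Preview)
Your proposal is correct and follows essentially the same approach as the paper: both rest on the monotonicity inclusion $\mc{J}(X,\Delta;\tfrac{c}{p}\cdot\mf{b}_p)\subseteq \mc{J}(X,\Delta;\tfrac{c}{pq}\cdot\mf{b}_{pq})$ together with the Noetherian property, and then the common-multiple trick to upgrade ``maximal'' to ``unique maximal.'' The only difference in emphasis is that the paper (following \cite[Lemma 11.1.1]{laza2}) spells out the log-resolution computation verifying the inclusion and leaves the ACC/common-multiple deduction implicit, whereas you take the inclusion as already recorded before the lemma and make the deduction explicit; neither version contains anything the other lacks.
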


\begin{proof}
The argument for the proof is basically the same as the proof of \cite[Lemma 11.1.1]{laza2}. For positive integers $p,q$, let $f:Y\to X$ be a log resolution of $(X,\Delta)$, $\mf{b}_{p}$, and $\mf{b}_{pq}$. Write
\begin{align*}
K_{Y}+\Delta_{Y}&=f^{\ast}(K_{X}+\Delta),\\
\mf{b}_{p}\cdot \mc{O}_{Y}&=\mc{O}_{Y}(-F_{p}),\\
\mf{b}_{pq}\cdot \mc{O}_{Y}&=\mc{O}_{Y}(-F_{pq}),
\end{align*}
where $F_{p}$ and $F_{pq}$ are effective divisors with simple normal crossing supports. Since $\mf{b}_{p}^{q}\subseteq \mf{b}_{pq}$,  we have $qF_{p}\ge F_{pq}$. Thus,
$$ \mc{O}_{Y}(-\floor{\Delta_{Y}+\tfrac{c}{p}F_{p}})\subseteq \mc{O}_{Y}(-\floor{\Delta_{Y}+\tfrac{c}{pq}F_{pq}}) $$
which implies $\mc{J}(X,\Delta;\frac{c}{p}\cdot\mf{b}_{p})\subseteq \mc{J}(X,\Delta;\frac{c}{pq}\cdot\mf{b}_{pq})$. This shows that the given family $\mc{I}$ of ideal sheaves has the unique maximal element.
\end{proof}

Lemma \ref{lem:maximal ideal} allows us to define the following.

\begin{definition}\label{def:asympt ideal sheaf}
Let $(X,\Delta)$, $\mf{b}_\bullet$ and $c$ be as above.
\begin{enumerate}[wide = 0pt, leftmargin = 1.4em]
    \item  The maximal element in the family $\mc{I}=\{\mc{J}(X,\Delta;\frac{c}{m}\cdot\mf{b}_{m})|m\in\mathbb Z_{>0}\}$ of multiplier ideal sheaves is called \textit{the asymptotic multiplier ideal sheaf of $(X,\Delta)$ associated to $c$ and $\mf{b}_\bullet$} and is denoted by $\mc{J}(X,\Delta;c\cdot\mathfrak{b}_{\bullet})$.

    \item Let $D$ be an effective $\Q$-Cartier $\Q$-divisor on $X$ and $\mathfrak{b}_{m}$  the base ideal of $|mD|$. Then we call the maximal element $\mc J(X,\Delta;c\cdot\frak{b}_\bullet)$ the \emph{asymptotic multiplier ideal sheaf of $(X,\Delta)$ associated to $c$ and $D$} and denote it by $\mc J(X,\Delta,c\cdot\norm{D})$. If $c=1$, then we call $\mc{J}(X,\Delta,\norm{D}):=\mc{J}(X,\Delta,1\cdot\norm{D})$ the \emph{asymptotic multiplier ideal sheaf associated to the triple $(X,\Delta,D)$}.

\end{enumerate}
\end{definition}

\medskip

\begin{proposition}[\protect{\cite[Lemma 5.4]{ELSV}}] \label{prop:perturb asymp_mult_ideals}
Let $(X,\Delta,D)$ be a triple with a big $\Q$-divisor $D$ on $X$. Then, for any sufficiently small $\varepsilon>0$, we have
$$ \mc{J}(X,\Delta;\norm{D})=\mc{J}(X,\Delta;\norm{(1+\varepsilon)D}). $$
\end{proposition}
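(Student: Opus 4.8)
The plan is to prove the two inclusions of the equality separately. Throughout, $\varepsilon$ is a positive rational number, so that $(1+\varepsilon)D$ is again a big $\Q$-divisor and the left-hand side of the asserted identity is defined; write $\mf{b}_m$ for the base ideal of the complete linear system $|mD|$ whenever $mD$ is Cartier, so that $\{\mf{b}_m\}$ is a graded sequence of ideals and, by definition, $\mc{J}(X,\Delta;\norm{D})=\mc{J}(X,\Delta;1\cdot\mf{b}_\bullet)$ is the maximal element of $\{\mc{J}(X,\Delta;\frac1m\mf{b}_m)\}_m$ (Lemma \ref{lem:maximal ideal}). For $k$ with $k(1+\varepsilon)\in\Z_{>0}$ the base ideal of $|k(1+\varepsilon)D|$ equals $\mf{b}_{k(1+\varepsilon)}$, and writing $m=k(1+\varepsilon)$, since raising the coefficient of an ideal never enlarges its multiplier ideal,
\[
\mc{J}(X,\Delta;\tfrac1k\mf{b}_{k(1+\varepsilon)})=\mc{J}(X,\Delta;\tfrac{1+\varepsilon}{m}\mf{b}_m)\subseteq\mc{J}(X,\Delta;\tfrac1m\mf{b}_m)\subseteq\mc{J}(X,\Delta;\norm{D}).
\]
Since $\mc{J}(X,\Delta;\norm{(1+\varepsilon)D})$ is the maximal element among the left-hand sides, the inclusion $\mc{J}(X,\Delta;\norm{(1+\varepsilon)D})\subseteq\mc{J}(X,\Delta;\norm{D})$ follows for every such $\varepsilon$.

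For the reverse inclusion I would first use Lemma \ref{lem:maximal ideal} to fix, once and for all, a positive integer $m_0$ with $m_0D$ Cartier and $\mc{J}(X,\Delta;\norm{D})=\mc{J}(X,\Delta;\frac1{m_0}\mf{b}_{m_0})$, together with a log resolution $f\colon Y\to X$ of $(X,\Delta)$ and of $\mf{b}_{m_0}$; write $K_Y+\Delta_Y=f^{*}(K_X+\Delta)$ and $\mf{b}_{m_0}\cdot\mc{O}_Y=\mc{O}_Y(-F)$. The divisor $\Delta_Y+\frac1{m_0}F$ has only finitely many components, and along each of them the coefficient of $\Delta_Y+\frac1{m_0}F$ is strictly smaller than its round-up; hence there is a threshold $\varepsilon_0>0$, depending only on $f,\Delta_Y,F,m_0$, such that $\floor{\Delta_Y+\frac{1+\varepsilon}{m_0}F}=\floor{\Delta_Y+\frac1{m_0}F}$ for all $0<\varepsilon<\varepsilon_0$. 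This gives
\[
\mc{J}(X,\Delta;\tfrac{1+\varepsilon}{m_0}\mf{b}_{m_0})=\mc{J}(X,\Delta;\tfrac1{m_0}\mf{b}_{m_0})=\mc{J}(X,\Delta;\norm{D})\qquad(0<\varepsilon<\varepsilon_0).
\]
The essential feature is that $\varepsilon_0$ is fixed, chosen before any auxiliary linear system enters the picture.

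Next, given $0<\varepsilon=p/q<\varepsilon_0$ with $p,q$ positive integers, I would put $m:=m_0(q+p)$ and $k:=m/(1+\varepsilon)=m_0q\in\Z_{>0}$, so that $mD$ is Cartier and $\mf{b}_m=\mf{b}_{k(1+\varepsilon)}$ is the $k$-th term of the graded sequence computing $\mc{J}(X,\Delta;\norm{(1+\varepsilon)D})$. By the valuative description of multiplier ideals of ideal sheaves — read off on $f$ and then on all higher models — a local section $g$ belongs to $\mc{J}(X,\Delta;\frac{1+\varepsilon}{m_0}\mf{b}_{m_0})$ if and only if $\ord_E(g)+A_{X,\Delta}(E)>\frac{1+\varepsilon}{m_0}\ord_E(\mf{b}_{m_0})$ for every prime divisor $E$ over $X$. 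Combining this with the graded inclusion $\mf{b}_{m_0}^{q+p}\subseteq\mf{b}_{m_0(q+p)}=\mf{b}_m$ (already exploited in the proof of Lemma \ref{lem:maximal ideal}), which gives $\frac1m\ord_E(\mf{b}_m)\le\frac1{m_0}\ord_E(\mf{b}_{m_0})$, I obtain for any $g\in\mc{J}(X,\Delta;\norm{D})=\mc{J}(X,\Delta;\frac{1+\varepsilon}{m_0}\mf{b}_{m_0})$ and any prime divisor $E$ over $X$ that
\[
\tfrac1k\ord_E(\mf{b}_m)=\tfrac{1+\varepsilon}{m}\ord_E(\mf{b}_m)\le(1+\varepsilon)\tfrac1{m_0}\ord_E(\mf{b}_{m_0})<\ord_E(g)+A_{X,\Delta}(E),
\]
hence $g\in\mc{J}(X,\Delta;\frac1k\mf{b}_m)\subseteq\mc{J}(X,\Delta;\norm{(1+\varepsilon)D})$. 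This yields $\mc{J}(X,\Delta;\norm{D})\subseteq\mc{J}(X,\Delta;\norm{(1+\varepsilon)D})$, and therefore the desired equality, for all $0<\varepsilon<\varepsilon_0$.

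The step I expect to be the main obstacle is the uniformity of $\varepsilon_0$. It is tempting to compare the two asymptotic ideals directly on a common log resolution of $(X,\Delta)$, $\mf{b}_{m_0}$ and $\mf{b}_m$, but then the ``floor does not jump'' threshold would depend on $m$, hence on $\varepsilon$: refining the resolution creates new exceptional divisors along which $\Delta_Y+\frac1{m_0}F$ picks up new (and possibly integral) coefficients. The resolution of this is exactly the two-step structure above — perform the floor comparison once on the fixed resolution of $\mf{b}_{m_0}$, and then transport the resulting valuative inequality to $\mf{b}_m$ using only $\mf{b}_{m_0}^{q+p}\subseteq\mf{b}_m$ and the resolution-independence of $\mc{J}(X,\Delta;\frac{1+\varepsilon}{m_0}\mf{b}_{m_0})$. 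The supporting facts needed along the way — resolution-independence of $\mc{J}(X,\Delta;-)$, the gradedness of $\{\mf{b}_m\}$, and the valuative membership criterion — are standard.
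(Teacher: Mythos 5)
Your proof is correct and follows the same strategy as the paper: the inclusion $\mc{J}(X,\Delta;\norm{(1+\varepsilon)D})\subseteq\mc{J}(X,\Delta;\norm{D})$ comes from monotonicity of multiplier ideals in the coefficient, and the reverse inclusion from a ``floor does not jump'' observation on a log resolution of $\mf{b}_{m_0}$ fixed once and for all, combined with the graded compatibility between the base ideals of $|mD|$ and of $|k(1+\varepsilon)D|$. Your valuative detour through $\mf{b}_m$ with $m=k(1+\varepsilon)$ merely spells out the paper's implicit step $\mc{J}(X,\Delta;\tfrac{1+\varepsilon}{m_0}\mf{b}_{m_0})\subseteq\mc{J}(X,\Delta;\norm{(1+\varepsilon)D})$, and the two-step structure you describe for keeping $\varepsilon_0$ uniform is exactly what the paper does.
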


\begin{proof}
Let $m$ be some sufficiently large positive integer such that $\mc{J}(X,\Delta;\norm{D})=\mc{J}(X,\Delta;\frac{1}{m}\mathfrak{b}_{m})$, where $\mathfrak{b}_{m}$ denotes the base ideal of $|mD|$. Then, for sufficiently small $\varepsilon\geq 0$, we have
\begin{align*}
\mc{J}(X,\Delta;\norm{D})&=\mc{J}(X,\Delta;\tfrac{1}{m}\cdot\mf{b}_{m})\\
&=\mc{J}(X,\Delta;\tfrac{1+\varepsilon}{m}\cdot\mf{b}_{m})\\
&\subseteq \mc{J}(X,\Delta;\norm{(1+\varepsilon)D})\\
&\subseteq \mc{J}(X,\Delta;\norm{D}). \qedhere
\end{align*}
\end{proof}

\medskip

The following results describe the locus $\pnklt(X,\Delta,D)$.

\begin{proposition}\label{prop:center in pnklt}
Let $(X,\Delta,D)$ be a triple with an effective $\Q$-divisor $D$ on $X$. Then  we have
$$\pnklt(X,\Delta,D)\subseteq\mc{Z}(\mc{J}(X,\Delta,\norm{D})).$$
In particular, if $\mc{J}(X,\Delta,\norm{D})=\mc{O}_X$, then $(X,\Delta,D)$ is weakly pklt.
\end{proposition}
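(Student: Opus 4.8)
The plan is to show that any prime divisor $E$ over $X$ with $a(E;X,\Delta,D)\le 0$ has center contained in the zero locus $\mathcal{Z}(\mathcal{J}(X,\Delta,\norm{D}))$; since $\pnklt(X,\Delta,D)$ is by definition the union of such centers, this gives the inclusion. The last assertion then follows immediately: if $\mathcal{J}(X,\Delta,\norm{D})=\mathcal{O}_X$ then $\mathcal{Z}(\mathcal{J}(X,\Delta,\norm{D}))=\varnothing$, so there is no prime divisor $E$ over $X$ with $a(E;X,\Delta,D)\le 0$, i.e.\ $a(E;X,\Delta,D)>0$ for all such $E$, which is exactly the definition of weakly pklt.

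For the main inclusion, fix a prime divisor $E$ over $X$ with $a(E;X,\Delta,D)\le 0$. By definition of $\mathcal{J}(X,\Delta,\norm{D})$ as the maximal element of the family $\{\mathcal{J}(X,\Delta;\frac1m\mathfrak{b}_m)\}_{m}$, there is a positive integer $p$ (which we may take divisible enough) and a log resolution $f\colon Y\to X$ of $(X,\Delta)$ and of the base ideal $\mathfrak{b}_p$ of $|pD|$ on which $E$ appears as a prime divisor, such that $\mathcal{J}(X,\Delta,\norm{D})=f_*\mathcal{O}_Y(-\floor{\Delta_Y+\frac1p F_p})$, where $K_Y+\Delta_Y=f^*(K_X+\Delta)$ and $f^*|pD|=|M_p|+F_p$. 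The key computation is to estimate the coefficient of $E$ in $\Delta_Y+\frac1p F_p$. We have $\mult_E\Delta_Y = 1 - A_{X,\Delta}(E)$, and since $\frac1p F_p$ is a choice of effective divisor $\R$-linearly equivalent (up to the free part) to $f^*D$ near $E$, one gets $\mult_E(\frac1p F_p)\ge \sigma_E(D)$ — more precisely, passing to a common resolution and using that $\sigma_E(D)=\inf\{\mult_E D'\mid D'\sim_\R D,\ D'\ge0\}$ together with $\frac1p F_p = f^*D - \frac1p M_p$ with $M_p$ free, hence $\frac1p F_p \ge \np(f^*D)$ along $E$ so $\mult_E(\frac1p F_p)\ge \sigma_E(D)$. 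Therefore
$$
\mult_E\bigl(\Delta_Y+\tfrac1p F_p\bigr)\ \ge\ \bigl(1-A_{X,\Delta}(E)\bigr)+\sigma_E(D)\ =\ 1 - a(E;X,\Delta,D)\ \ge\ 1,
$$
so $\floor{\Delta_Y+\frac1p F_p}$ has coefficient $\ge 1$ along $E$. Consequently $f_*\mathcal{O}_Y(-\floor{\Delta_Y+\frac1p F_p})$ is contained in the ideal of $c_X(E)=f(E)$, which shows $c_X(E)\subseteq \mathcal{Z}(\mathcal{J}(X,\Delta,\norm{D}))$. Taking the union over all such $E$ yields $\pnklt(X,\Delta,D)\subseteq\mathcal{Z}(\mathcal{J}(X,\Delta,\norm{D}))$.

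I expect the only delicate point to be the inequality $\mult_E(\frac1p F_p)\ge \sigma_E(D)$ when $E$ is an exceptional divisor over $X$ rather than a divisor on $X$: one must be careful that the log resolution $f$ is chosen to extract $E$ and simultaneously resolve $|pD|$, and one should invoke Lemma \ref{lemma-np-behavior} (the pull-back behavior of the negative part) to identify $\mult_E(f^*D)$'s negative contribution with $\sigma_E(D)$ up to an effective exceptional correction, which only helps. A minor technical wrinkle is that $\mathcal{J}(X,\Delta,\norm{D})$ is a single maximal ideal realized by one particular $p$, but the estimate above holds uniformly in $p$ because $\frac1p F_p\ge \np(f^*D)$ for every $p$ on a common resolution; so there is no loss in working with the maximal realizing $p$. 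Everything else is the standard unwinding of the definition of the zero locus of a multiplier ideal.
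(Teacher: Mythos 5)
Your proof is correct and takes essentially the same route as the paper's: pick a log resolution $f$ on which $E$ appears and on which $\mathcal{J}(X,\Delta,\norm{D})$ is computed as $f_*\mathcal{O}_Y(-\floor{\Delta_Y+\tfrac{1}{m}F_m})$, observe that $\np(f^*D)\le\tfrac{1}{m}F_m$, and deduce $\mult_E(\Delta_Y+\tfrac{1}{m}F_m)\ge 1-a(E;X,\Delta,D)\ge 1$, so that $c_X(E)\subseteq\mathcal{Z}(\mathcal{J}(X,\Delta,\norm{D}))$. The technical points you flag (extracting $E$ on $Y$, and the estimate being uniform in $m$ so it applies to the maximizing index) are precisely what the paper handles as well.
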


\begin{proof}
For a positive integer $m$, let $f:Y\to X$ be a log resolution of $(X,\Delta)$ such that there exists a decomposition $f^{\ast}|mD|=|M_{m}|+F_{m}$ into free part $|M_{m}|$ and fixed part $F_{m}$ having simple normal crossing support. Write $K_{Y}+\Delta_{Y}=f^{\ast}(K_{X}+\Delta)$ for some divisor $\Delta_{Y}$ on $Y$. Suppose that $E$ is a prime divisor over $X$ satisfying $a(E;X,\Delta,D)=A_{X,\Delta}(E)-\sigma_{E}(D)\le 0$. By taking $f$ higher if necessary, we may assume that $E$ is a prime divisor on $Y$. Let $f^{\ast}D=P+N$ be the divisorial Zariski decomposition. Then since $mN\le F_{m}$ holds, we have $1\le \mult_{E}(\Delta_{Y}+N)\le \mult_{E}(\Delta_{Y}+\frac{1}{m}F_{m})$. Thus $\mult_E(-\floor{\Delta_Y+\frac{1}{m}F_m})\leq -1$. Since we may assume that $m$ is an integer such that $\mc{J}(X,\Delta,\norm{D})=f_*\mc O_Y(-\floor{\Delta_Y+\frac{1}{m}F_m})$, the center $c_X(E)$ is contained in $\mc{Z}(\mc{J}(X,\Delta,\norm{D}))$.
\end{proof}

\begin{proposition}\label{prop:pnklt(1-e) in Cent}
Let $(X,\Delta,D)$ be a triple with a big $\Q$-divisor $D$ on $X$. Assume that $(X,\Delta)$ is klt. Then for any $0<\varepsilon\le 1$, we have
$$ \mc{Z}(\mc{J}(X,\Delta,(1-\varepsilon)\cdot\norm{D}))\subseteq \pnklt(X,\Delta,D).$$
\end{proposition}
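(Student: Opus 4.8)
The plan is to show that if a prime divisor $E$ over $X$ satisfies $\mult_E(-\lfloor \Delta_Y + \frac{1-\varepsilon}{m}F_m\rfloor)\leq -1$ for the relevant resolution $f\colon Y\to X$ and a sufficiently divisible $m$, then $a(E;X,\Delta,D)\leq 0$, so that $c_X(E)\in\pnklt(X,\Delta,D)$; since $\mc Z(\mc J(X,\Delta,(1-\varepsilon)\cdot\norm{D}))$ is the union of such centers, this gives the desired inclusion. First I would fix $\varepsilon$, pick $m$ sufficiently divisible so that $\mc J(X,\Delta,(1-\varepsilon)\cdot\norm{D})=f_*\mc O_Y(-\lfloor \Delta_Y+\tfrac{1-\varepsilon}{m}F_m\rfloor)$ for a log resolution $f\colon Y\to X$ of $(X,\Delta)$ realizing the decomposition $f^*|mD|=|M_m|+F_m$, and take a component $E$ of the support of $\lfloor \Delta_Y+\tfrac{1-\varepsilon}{m}F_m\rfloor$, i.e. one with $\mult_E(\Delta_Y+\tfrac{1-\varepsilon}{m}F_m)\geq 1$. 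Writing $f^*D=P+N$ for the divisorial Zariski decomposition, we have $a(E;X,\Delta,D)=\mult_E(-\Delta_Y - N)+1 = 1-\mult_E\Delta_Y-\mult_E N$, so it suffices to show $\mult_E\Delta_Y + \mult_E N \geq 1$.

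The key point is to compare $\tfrac{1-\varepsilon}{m}F_m$ with $N$. Here I would invoke Proposition \ref{prop:perturb asymp_mult_ideals}, or rather the ingredient behind it together with the valuation-theoretic description of $\sigma_E$: since $D$ is big, $\sigma_E(D)=\lim_{m}\tfrac{1}{m}\mult_E F_m$ and moreover $\tfrac{1}{m}\mult_E F_m \to \mult_E N = \sigma_E(D)$ from above, so $\tfrac{1-\varepsilon}{m}\mult_E F_m$ can be made strictly \emph{smaller} than $\mult_E N$ once $m$ is large — but the subtlety is that $m$ is already fixed to realize the asymptotic ideal, so one must instead argue at the level of the fixed $m$. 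The clean way is: $(1-\varepsilon)\cdot\norm{D}$ and $(1-\varepsilon)D$ have the same asymptotic ideal, and on any resolution $\tfrac{1-\varepsilon}{m}F_m \leq \tfrac{1-\varepsilon}{m}\cdot\tfrac{m}{1}\cdot(\text{something bounded below by }N$ up to $\varepsilon$-error$)$; more precisely, for $m$ sufficiently divisible $\tfrac1m \mult_E F_m$ is within $\varepsilon\cdot\mult_E N$ of $\mult_E N$ (by definition of $\sigma_E(D)$ as an infimum and bigness), hence $\tfrac{1-\varepsilon}{m}\mult_E F_m \leq \mult_E N$. Combined with $\mult_E(\Delta_Y+\tfrac{1-\varepsilon}{m}F_m)\geq 1$ this yields $\mult_E\Delta_Y + \mult_E N\geq 1$, i.e. $a(E;X,\Delta,D)\leq 0$, as wanted.

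I expect the main obstacle to be exactly this matching of quantifiers: the asymptotic multiplier ideal is computed on a \emph{single} sufficiently divisible $m$, whereas the inequality $\tfrac{1-\varepsilon}{m}\mult_E F_m\leq \sigma_E(D)$ naturally wants $m\to\infty$. The resolution is to use the hypothesis that $D$ is big so that $\sigma_E(D)=\inf_m \tfrac1m\mult_E F_m$ is a genuine infimum over the base ideals of $|mD|$ (the stable base locus description), giving $\tfrac{1}{m}\mult_E F_m \geq \sigma_E(D)$ for \emph{every} $m$, hence $\tfrac{1-\varepsilon}{m}\mult_E F_m \geq (1-\varepsilon)\sigma_E(D)$ — this is the wrong direction, so instead one uses that for \emph{sufficiently divisible} $m$, $\tfrac1m\mult_E F_m$ is close to $\sigma_E(D)$ from above by at most a factor $\tfrac{1}{1-\varepsilon}$, which is where Proposition \ref{prop:perturb asymp_mult_ideals} (stability of the asymptotic ideal under scaling by $1+\varepsilon$) and the finiteness of $\{E: \sigma_E(D)>0\}$ from \cite{Nak} get used to handle all relevant $E$ uniformly. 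Once the bookkeeping with $\varepsilon$ is arranged correctly the rest is the routine translation between $\lfloor \cdot \rfloor$-conditions on $Y$ and log discrepancies of the triple already used in the proof of Proposition \ref{prop:center in pnklt}.
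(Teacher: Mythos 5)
There is a genuine gap, and it sits exactly where you flagged the ``main obstacle.'' Your argument reduces to the claim that, for a sufficiently divisible $m$ realizing the asymptotic ideal, one has $\tfrac{1}{m}\mult_E F_m \le \tfrac{1}{1-\varepsilon}\sigma_E(D)$ (or a variant with $(1+\varepsilon)$) for every prime divisor $E$ appearing in the zero locus. This is not available. First, it fails outright when $\sigma_E(D)=0$: the right side is $0$, while $\mult_E F_m$ can be positive for all $m$ (the stable base locus can strictly contain the restricted base locus; components with vanishing $\sigma_E$ can persist in every $F_m$). Second, even restricting to $E$ with $\sigma_E(D)>0$, the convergence $\tfrac{1}{m}\mult_E F_m \to \sigma_E(D)$ is pointwise in $E$ with no uniform rate, and the relevant $E$'s live \emph{over} $X$ on resolutions $Y_m$ that change with $m$. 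Nakayama's finiteness of $\{E : \sigma_E(D)>0\}$ holds only for $E$ \emph{on} a fixed variety; the set of such $E$ over $X$ is in general infinite. So neither Proposition~\ref{prop:perturb asymp_mult_ideals} nor that finiteness lets you trade the $E$-by-$E$ convergence for a single choice of $m$.

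A symptom of the gap is that your argument never uses the klt hypothesis on $(X,\Delta)$, which is essential to the statement. The paper's proof instead invokes the uniform subadditivity estimate \cite[Cor.~11.2.22]{laza2}: there is a \emph{fixed} effective divisor $G$ on a fixed resolution and an $m_0$ so that $\mf{b}(f^*|kmD|)\otimes\mc O_Y(-kG)\subseteq \mf{b}(f^*|mD|)^k$ for all $k\ge 1$, $m\ge m_0$. This replaces the nonexistent upper bound $\tfrac{1}{m}\mult_E F_m \lesssim \sigma_E(D)$ by the two-sided bookkeeping $k\psi^*F_m \le k\psi^*\phi^*G + F_{km}$. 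The klt hypothesis then enters to make the error term small: setting $\mu=\lct(Y,\Delta_Y;G)>0$ gives $\tfrac{1}{m}\mult_E\phi^*G \le \tfrac{1}{\mu m}A_{X,\Delta}(E)$, which is $\le \varepsilon A_{X,\Delta}(E)$ once $m>\tfrac{1}{\mu\varepsilon}$ --- a bound \emph{uniform in $E$} because it is expressed as a multiple of $A_{X,\Delta}(E)$. Letting $k\to\infty$ then yields $(1-\varepsilon(1-\varepsilon))A_{X,\Delta}(E)\le(1-\varepsilon)\sigma_E(D)$, and the conclusion follows by contradiction. Without a uniform-$G$ input of this kind, your reduction from $\tfrac{1-\varepsilon}{m}F_m$ to $N$ cannot be carried out.
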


\begin{proof}
By \cite[Corollary 11.2.22]{laza2}, there exists a log resolution $f:Y\to X$, a positive integer $m_{0}$, and an effective divisor $G$ on $Y$ satisfying the following conditions:
\begin{enumerate}[wide = 0pt, leftmargin = 1.4em]
\item $\Exc(f)\cup\Supp f_{\ast}^{-1}\Delta$ is a reduced simple normal crossing divisor, and
\item for any $m\ge m_{0}$
$$ \mf{b}(f^{\ast}|kmD|)\otimes \mc{O}_{Y}(-kG)\subseteq \mf{b}(f^{\ast}|mD|)^{k} \;\text{ holds for any }  k\ge 1. $$
\end{enumerate}

Let $\phi:Y'\to Y$ be a birational morphism such that $g=f\circ \phi$ is a resolution of $\mf{b}(|mD|)$. We can write $K_{Y'}+\Delta_{Y'}=g^{\ast}(K_{X}+\Delta)$ for some divisor $\Delta_{Y'}$ on $Y'$  and $g^{\ast}|mD|=|M_{m}|+F_{m}$ where $|M_{m}|$ is free and $F_{m}$ is an effective divisor with simple normal crossing support. If we choose $m$ sufficiently large and divisible, then
$$ g_{\ast}\mc{O}_{Y'}(-\floor{\Delta_{Y'}+\tfrac{1-\varepsilon}{m}F_{m}})=\mc{J}(X,\Delta,(1-\varepsilon)\cdot\norm{D}). $$
Let $\Delta_Y$ be the divisor on $Y$ such that $K_Y+\Delta_Y=g^*(K_X+\Delta)$ and define the log canonical threshold $\mu=\lct(Y,\Delta_{Y};G):=\sup\{t\geq 0\;|(Y,\Delta_Y+tG)\;\text{is sub-lc}\}$. Note that $\mu>0$ holds since $(X,\Delta)$ is klt. Let $m$ be a sufficiently large integer such that $\frac{1}{\mu m}<\varepsilon$.

Suppose that $Z$ is an irreducible component of $\mc{Z}(\mc{J}(X,\Delta,(1-\varepsilon)\cdot\norm{D})$. Then we can find a prime divisor $E$ on $Y$ such that $c_{X}(E)=Z$ and
$$ \mult_{E}(\Delta_{Y'}+\tfrac{1-\varepsilon}{m}F_{m})\ge 1. $$
Note that the last inequality is equivalent to
$$ A_{X,\Delta}(E)-\frac{1-\varepsilon}{m}\mult_{E}F_{m}\le 0 $$
and we have
$$ \frac{1}{m}\mult_{E}\phi^{\ast}G\le \frac{1}{\mu m}A_{Y,\Delta_Y}(E)=\frac{1}{\mu m}A_{X,\Delta}(E)\le \varepsilon A_{X,\Delta}(E). $$

Let $\psi:Y''\to Y'$ be a birational morphism such that $h=g\circ\psi$ is a log resolution of $\mf{b}(|kmD|)$. We can write $K_{Y''}+\Delta_{Y''}=h^{\ast}(K_{X}+\Delta)$ for some divisor $\Delta_{Y''}$ on $Y''$ and $h^{\ast}|kmD|=|M_{km}|+F_{km}$ where $|M_{km}|$ is free and $F_{km}$ is an effective divisor with simple normal crossing support. By construction, we have
$$ k\psi^{\ast}F_{m}\le k\psi^{\ast}\phi^{\ast}G+F_{km}. $$
Therefore, we obtain that
\begin{align*}
0&\ge A_{X,\Delta}(E)-\frac{1-\varepsilon}{m}\mult_{E}F_{m}\\
&\ge A_{X,\Delta}(E)-\frac{1-\varepsilon}{m}\mult_{E}\phi^{\ast}G-\frac{1-\varepsilon}{km}\mult_{E}F_{km}\\
&\ge (1-\varepsilon(1-\varepsilon))A_{X,\Delta}(E)-\frac{1-\varepsilon}{km}\mult_{E}F_{km}.
\end{align*}
By letting $k\to \infty$, we finally have
$$ (1-\varepsilon(1-\varepsilon))A_{X,\Delta}(E)\le (1-\varepsilon)\sigma_{E}(D). $$
If $Z\not\subseteq \pnklt(X,\Delta,D)$, then $A_{X,\Delta}(E)>\sigma_{E}(D)$. Thus, we can conclude that
$$ (1-\varepsilon(1-\varepsilon))A_{X,\Delta}(E)<(1-\varepsilon)A_{X,\Delta}(E). $$
Since $(X,\Delta)$ is klt, $A_{X,\Delta}(E)>0$. Thus we have
$$ (1-\varepsilon(1-\varepsilon))<(1-\varepsilon) $$
which is a contradiction.
\end{proof}

In summary, we have the following inclusions: for $0<\varepsilon\leq 1$
\begin{equation*}
 \mc{Z}(\mc{J}(X,\Delta,(1-\varepsilon)\cdot\norm{D})\subseteq \pnklt(X,\Delta,D)\subseteq \mc{Z}(\mc{J}(X,\Delta,\norm{D})).
\end{equation*}
We will prove the cases where we can let $\varepsilon=0$ in Theorems \ref{thrm:J_pnklt eff -K-Delta} and \ref{thrm: fg}.

\medskip

\begin{theorem} \label{thrm:pklt<->J=0}
Let $(X,\Delta,D)$ be a triple with a big $\Q$-divisor $D$ on $X$. Then $(X,\Delta,D)$ is pklt if and only if $\mc{J}(X,\Delta,\norm{D})=\mc O_X$.
\end{theorem}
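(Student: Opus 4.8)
The plan is to recast both conditions as the single inequality $\lct_{\sigma}(X,\Delta,D)>1$ and then bring in the quasi-monomial valuation theory of Subsection \ref{subsec:valuation}. First I would dispose of the case where $(X,\Delta)$ is not klt: then $\mc{J}(X,\Delta,\norm{D})\subseteq\mc{J}(X,\Delta)\subsetneq\mc{O}_X$, while picking a prime divisor $E$ over $X$ with $A_{X,\Delta}(E)\le 0$ gives $a(E;X,\Delta,D)\le A_{X,\Delta}(E)\le 0$, so neither condition holds. Thus I may assume $(X,\Delta)$ is klt and put $\alpha:=\inf_E A_{X,\Delta}(E)>0$. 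Let $\mf{b}_m$ be the base ideal of $|mD|$ and $\mf{b}_\bullet=\{\mf{b}_m\}$. By Lemma \ref{lem:maximal ideal} one has $\mc{J}(X,\Delta,\norm{D})=\mc{J}(X,\Delta;\tfrac{1}{m}\mf{b}_m)$ for all sufficiently divisible $m$, and $\mc{J}(X,\Delta;\tfrac{1}{m}\mf{b}_m)=\mc{O}_X$ precisely when $m\cdot\lct(X,\Delta,\mf{b}_m)>1$; together with the standard identity $\lct(X,\Delta,\mf{b}_\bullet)=\sup_m m\cdot\lct(X,\Delta,\mf{b}_m)$ and the fact that $\lct_{\sigma}(X,\Delta,D)=\lct(X,\Delta,\mf{b}_\bullet)$ (with $\sigma_\nu(D)=\nu(\mf{b}_\bullet)$ since $D$ is big), this shows $\mc{J}(X,\Delta,\norm{D})=\mc{O}_X$ if and only if $\lct_{\sigma}(X,\Delta,D)>1$. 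It therefore suffices to prove that $(X,\Delta,D)$ is pklt if and only if $\lct_{\sigma}(X,\Delta,D)>1$.

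The implication $\lct_{\sigma}(X,\Delta,D)>1\Rightarrow$ pklt is the easy half. For a prime divisor $E$ over $X$ with $\sigma_E(D)>0$ we have $A_{X,\Delta}(E)/\sigma_E(D)\ge\lct_{\sigma}(X,\Delta,D)$, hence
\[
a(E;X,\Delta,D)=A_{X,\Delta}(E)\Bigl(1-\tfrac{\sigma_E(D)}{A_{X,\Delta}(E)}\Bigr)\ge\alpha\Bigl(1-\tfrac{1}{\lct_{\sigma}(X,\Delta,D)}\Bigr)>0,
\]
while $a(E;X,\Delta,D)=A_{X,\Delta}(E)\ge\alpha$ when $\sigma_E(D)=0$; so $\inf_E a(E;X,\Delta,D)>0$. (Alternatively this direction follows from Propositions \ref{prop:perturb asymp_mult_ideals} and \ref{prop:center in pnklt} and Lemma \ref{lem:weakly pklt}: $\mc{J}(X,\Delta,\norm{D})=\mc{O}_X$ forces $\mc{J}(X,\Delta,\norm{(1+\varepsilon)D})=\mc{O}_X$ for small $\varepsilon>0$, so both $(X,\Delta,D)$ and $(X,\Delta,(1+\varepsilon)D)$ are weakly pklt.)

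The substance is the converse, and this is where Theorem \ref{thrm:QM compute lct} is used. Assume $(X,\Delta,D)$ is pklt, say $\inf_E a(E;X,\Delta,D)=\delta>0$, but suppose for contradiction $\lct_{\sigma}(X,\Delta,D)\le 1$. By Theorem \ref{thrm:QM compute lct} there is a quasi-monomial valuation $\omega=\nu_{\alpha_0,\eta}\in\QM_\eta(Y,E)$ with $A_{X,\Delta}(\omega)/\sigma_\omega(D)=\lct_{\sigma}(X,\Delta,D)\le 1$; restricting to the stratum on which $\alpha_0$ has all coordinates positive, I may assume $\alpha_0\in\R^r_{>0}$. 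On the open cone $\R^r_{>0}$, the function $\beta\mapsto A_{X,\Delta}(\nu_{\beta,\eta})-\sigma_{\nu_{\beta,\eta}}(D)$ is the difference of the linear function $\beta\mapsto\sum_j\beta_j A_{X,\Delta}(E_j)$ and the concave function $\beta\mapsto\sigma_{\nu_{\beta,\eta}}(D)$ (an infimum of concave piecewise-linear functions), hence is locally Lipschitz near $\alpha_0$ with some constant $L$, and at $\alpha_0$ it equals $A_{X,\Delta}(\omega)-\sigma_\omega(D)\le 0$. If $\alpha_0$ is rational then $\omega$ is a positive rational multiple of some $\ord_{E_0}$, and $a(E_0;X,\Delta,D)\le 0$ contradicts pklt. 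If $\alpha_0$ is irrational, Dirichlet's theorem on simultaneous Diophantine approximation gives rational vectors $\beta_n\to\alpha_0$ with common denominators $q_n\to\infty$ and $\|\beta_n-\alpha_0\|_\infty\le q_n^{-1-1/r}$; writing $q_n\nu_{\beta_n,\eta}$ as a positive integer multiple of $\ord_{E_n}$ for a prime divisor $E_n$ over $X$, homogeneity of $A_{X,\Delta}(\cdot)$ and of $\sigma_{\bullet}(D)$ yields
\[
0<\delta\le a(E_n;X,\Delta,D)\le q_n\bigl(A_{X,\Delta}(\nu_{\beta_n,\eta})-\sigma_{\nu_{\beta_n,\eta}}(D)\bigr)\le q_n\cdot L\|\beta_n-\alpha_0\|_\infty\le L\, q_n^{-1/r}\to 0,
\]
a contradiction. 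Hence $\lct_{\sigma}(X,\Delta,D)>1$, and by the first paragraph $\mc{J}(X,\Delta,\norm{D})=\mc{O}_X$.

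The main obstacle is exactly this last step. The pklt condition is an inequality for the \emph{difference} $A_{X,\Delta}(E)-\sigma_E(D)$ over all divisorial valuations, whereas $\lct_{\sigma}(X,\Delta,D)>1$ concerns the \emph{ratio} $A_{X,\Delta}(\nu)/\sigma_\nu(D)$; because $A_{X,\Delta}$ is unbounded over prime divisors, the ratio could a priori decrease to $1$ while the difference stays bounded below, and excluding this needs both the optimal quasi-monomial valuation supplied by Theorem \ref{thrm:QM compute lct} and a Diophantine estimate turning it into divisorial valuations whose log discrepancy defect tends to zero. The ancillary inputs in the first paragraph --- the identity $\lct(X,\Delta,\mf{b}_\bullet)=\lim_m m\cdot\lct(X,\Delta,\mf{b}_m)$ and $\sigma_\nu(D)=\nu(\mf{b}_\bullet)$ for big $D$ --- are standard but should be cited carefully.
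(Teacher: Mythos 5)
Your proof is correct and rests on the same core technical engine as the paper's: Theorem \ref{thrm:QM compute lct} to produce a quasi-monomial valuation $\nu_0$ computing $\lct_{\sigma}(X,\Delta,D)$, the observation that $\phi(\nu)=A_{X,\Delta}(\nu)-\sigma_\nu(D)$ is convex (linear minus concave), hence locally Lipschitz on $\QM_\eta(Y,E)$, and a Diophantine approximation step that turns $\nu_0$ into nearby divisorial valuations whose log discrepancy defect is controlled. Your argument by contradiction and the paper's direct lower bound $\phi(q\nu_0)\ge\phi(qw_t)-|\phi(q\nu_0)-\phi(qw_t)|>\beta-Ct>0$ are logically the same statement, and invoking Dirichlet's simultaneous approximation in place of the paper's Lemma \ref{lem:diopahntine} (proved via Weyl's criterion) is a minor substitution. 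The one genuine organizational difference is the easy direction: you fold both implications into the single inequality $\lct_{\sigma}(X,\Delta,D)>1$ and deduce pklt from the elementary ratio-to-difference estimate $a(E;X,\Delta,D)\ge\alpha\bigl(1-1/\lct_{\sigma}(X,\Delta,D)\bigr)$, whereas the paper obtains $\mc{J}(X,\Delta,\norm{D})=\mc{O}_X\Rightarrow\text{pklt}$ by perturbing to $(1+\varepsilon)D$ (Proposition \ref{prop:perturb asymp_mult_ideals}), applying Proposition \ref{prop:center in pnklt} to get weakly pklt, and upgrading via Lemma \ref{lem:weakly pklt}. Your route is more self-contained but, as you flag, quietly leans on the identity $\lct(X,\Delta,\mf{b}_\bullet)=\sup_m m\cdot\lct(X,\Delta,\mf{b}_m)$ and the criterion $\mc{J}(X,\Delta;c\cdot\mf{a})=\mc{O}_X\Leftrightarrow c<\lct(X,\Delta,\mf{a})$, which the paper sidesteps by citing a single lemma from \cite{Xu2}; these should indeed be referenced explicitly (they are in the Jonsson--Musta\c t\u a framework \cite{JM}).
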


\begin{proof}
If $(X,\Delta,D)$ is pklt, then $(X,\Delta)$ is a klt pair. Thus by Theorem \ref{thrm:QM compute lct}, there exists a quasi-monomial valuation $\nu_0$ computing the log canonical threshold $\lct_{\sigma}(X,\Delta;D)$, that is,
$$
\inf_{\nu\in\Val^*_X}\frac{A_{X,\Delta}(\nu_0)}{\sigma_\nu(D)}=\frac{A_{X,\Delta}(\nu_0)}{\sigma_{\nu_0}(D)}.
$$
Let $(Y,E)$ be a log-smooth model over $(X,\Delta)$ such that $\nu_0\in \QM_{\eta}(Y,E)$ where $E=\sum_{i=1}^r E_{i}$ is a reduced simple normal crossing divisor and $\eta=c_{Y}(\nu_0)$ the generic point of an irreducible  component of $E_{1}\cap \cdots \cap E_{r}$.

Let $\phi(\nu)=A_{X,\Delta}(\nu)-\sigma_{\nu}(D)$ for $\nu\in\QM_{\eta}(Y,E)$. Then clearly $\phi$ is homogeneous of degree 1. Furthermore, note that $A_{X,\Delta}(\nu)$ is linear on $\QM_{\eta}(Y,E)$ and $\sigma_{\nu}(D)$ is concave on $\QM_{\eta}(Y,E)$. Therefore, $\phi$ is convex on $\QM_{\eta}(Y,E)$. Since every convex function is locally Lipschitz, we can find positive real numbers $C,\delta>0$ such that
$$ |\phi(\nu_0)-\phi(w)|<C|\nu_0-w| $$
holds for all valuations $w\in \QM_{\eta}(Y,E)$ with $|\nu_0-w|\le \delta$. Since the divisorial valuations in $\QM_{\eta}(Y,E)$ are dense in $\QM_\eta(Y,E)$, for any $t>0$ there always exists some divisorial valuation $\omega$ such that $|\omega-\nu_0|<t$.

On the other hand, by Lemma \ref{lem:diopahntine}, for each $t>0$, there exists a divisorial valuation $w_{t}\in \QM_{\eta}(Y,E)$ and a positive rational number $q$ such that
\begin{enumerate}[label=$\bullet$]
\item $qw_{t}=\ord_{F}$, where $F$ is a prime divisor over $X$,
\item $|\nu-w_{t}|<\frac{t}{q}$.
\end{enumerate}
Since $(X,\Delta,D)$ is pklt, there is a positive real number $\beta>0$ such that $A_{X,\Delta}(E)-\sigma_{E}(D)\ge \beta$ for all prime divisors $E$ over $X$. By taking a sufficiently small $t>0$, we may assume
$$ \phi(q\nu)\ge \phi(qw_{t})-|\phi(q\nu)-\phi(qw_{t})|>\beta-Ct>0. $$
Hence, $\varphi(\nu)=A_{X,\Delta}(\nu)-\sigma_{\nu}(D)>0$. It follows that
$$\lct_{\sigma}(X,\Delta,D)=\frac{A_{X,\Delta}(\nu_0)}{\sigma_{\nu_0}(D)}>1.$$
Now, the assertion $\mc{J}(X,\Delta,\norm{D})=\mc{O}_X$ follows from \cite[Lemma 1.56]{Xu2}.

Conversely, assume that $\mc{J}(X,\Delta,\norm{D})=\mc{O}_{X}$. Then, by Proposition \ref{prop:perturb asymp_mult_ideals},  for a sufficiently small $\varepsilon>0$,
we have
$$ \mc{J}(X,\Delta,\norm{(1+\varepsilon)D})=\mc{J}(X,\Delta,\norm{D})=\mc{O}_{X}. $$
By Proposition \ref{prop:center in pnklt}, the triple $(X,\Delta,(1+\varepsilon)D)$ is weakly pklt. Thus, by Lemma \ref{lem:weakly pklt}, $(X,\Delta,D)$ is pklt.
\end{proof}

\begin{lemma}[{cf. \cite[Lemma 2.7]{LX}}]\label{lem:diopahntine}
Let $\alpha=(\alpha_1,\cdots,\alpha_r)\in\mathbb R^r_{\geq 0}$ and fix a real number $\varepsilon>0$. Then there exist $\alpha'\in\Q^r_{\geq 0}$ and a positive integer $n$ such that
\begin{itemize}
    \item $n\alpha'\in\mathbb{Z}^r_{\geq 0}$, and
    \item $\norm{\alpha-\alpha'}<\frac{\varepsilon}{n}$.
\end{itemize}
\end{lemma}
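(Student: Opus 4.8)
The statement is a simultaneous Diophantine approximation result: given $\alpha=(\alpha_1,\dots,\alpha_r)\in\R^r_{\geq 0}$ and $\varepsilon>0$, one wants a rational approximation $\alpha'$ with a common denominator $n$ such that $\|\alpha-\alpha'\|<\varepsilon/n$. The plan is to invoke the classical Dirichlet-type simultaneous approximation theorem (equivalently, a pigeonhole argument): for any real vector $\alpha$ and any integer $Q>1$, there exist an integer $n$ with $1\leq n\leq Q^r$ and integers $p_1,\dots,p_r$ such that $|n\alpha_i-p_i|<1/Q$ for every $i$. Setting $\alpha'=(p_1/n,\dots,p_r/n)$ gives $\|\alpha-\alpha'\|\leq \sqrt{r}/(nQ)$, and since $n\leq Q^r$ one can absorb the constants by choosing $Q$ large.

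First I would reduce to the case $\alpha\in\R^r_{>0}$ (if some coordinate is zero, keep it zero in $\alpha'$ and work with the remaining coordinates; if $\alpha=0$ the statement is trivial with $\alpha'=0$, $n=1$, though one should note $\varepsilon/n>0$ is needed, so pick any $n$). Next I would apply the pigeonhole argument: partition the cube $[0,1)^r$ into $Q^r$ subcubes of side $1/Q$, and among the $Q^r+1$ points $\{j\alpha\}$ (fractional parts, coordinatewise) for $j=0,1,\dots,Q^r$, two of them, say $j_1\alpha$ and $j_2\alpha$ with $j_1<j_2$, lie in the same subcube. Then $n:=j_2-j_1$ satisfies $1\leq n\leq Q^r$ and $\{n\alpha_i\}$ is within $1/Q$ of either $0$ or $1$ for each $i$; rounding $n\alpha_i$ to the nearest integer $p_i$ gives $|n\alpha_i-p_i|<1/Q$. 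Taking $\alpha_i'=p_i/n\geq 0$ (here using $\alpha_i\geq 0$ and $1/Q$ small enough, or simply replacing any negative $p_i$ by $0$, which only improves the estimate since $\alpha_i\geq 0$), we get $\|\alpha-\alpha'\|<\sqrt{r}/(nQ)$.

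Finally I would choose $Q$ so that the bound beats $\varepsilon/n$: since $\|\alpha-\alpha'\|<\sqrt r/(nQ)$, it suffices that $\sqrt r/Q<\varepsilon$, i.e. $Q>\sqrt r/\varepsilon$, which is achievable for any $\varepsilon>0$. The conditions $n\alpha'\in\Z^r_{\geq 0}$ (immediate, as $n\alpha_i'=p_i\in\Z_{\geq 0}$) and $\|\alpha-\alpha'\|<\varepsilon/n$ are then both met.

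There is essentially no serious obstacle here; the only points requiring care are (i) ensuring nonnegativity of the coordinates of $\alpha'$, handled by the observation that rounding a nonnegative real to its nearest integer and then, if necessary, replacing $-0$ or a spuriously negative value by $0$ only decreases the error since the true value is $\geq 0$; and (ii) bookkeeping the dimension constant $\sqrt r$, which is harmless. One could alternatively cite this directly from \cite[Lemma 2.7]{LX} as indicated, in which case the "proof" is just a pointer to the literature together with the remark that the nonnegativity is preserved because all $\alpha_i\geq 0$.
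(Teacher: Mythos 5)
Your proof is correct and in the same spirit as the paper's: both are simultaneous Diophantine approximation arguments producing a rational vector $\alpha'$ with a controlled common denominator $n$. The difference is in the rounding. You round each $n\alpha_i$ to the nearest integer $p_i$ (which is exactly what the Dirichlet pigeonhole delivers) and set $\alpha_i'=p_i/n$; the paper instead invokes Weyl's criterion to assert that some $n$ makes every fractional part $n\alpha_i-\floor{n\alpha_i}$ less than $\varepsilon/\sqrt r$ simultaneously, and then sets $\alpha_i'=\floor{n\alpha_i}/n$. Your version is actually the safer one: the assertion that all fractional parts (as opposed to all distances to the nearest integer) can be made small simultaneously is false in general. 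For instance, with $\alpha=(\theta,1-\theta)$ and $\theta$ irrational, one has $\{n\theta\}+\{n(1-\theta)\}=1$ for every $n\ge 1$, so these two fractional parts can never both lie below $1/2$. What Dirichlet and Kronecker--Weyl really provide is that each $n\alpha_i$ can be brought within $\delta$ of \emph{some} integer; that is what your nearest-integer rounding exploits, and since $n\alpha_i\ge 0$ the nearest integers $p_i$ are automatically nonnegative, so $\alpha'\in\Q^r_{\ge 0}$ and $n\alpha'\in\Z^r_{\ge 0}$ come for free. Your bookkeeping of the $\sqrt r$ constant and the choice $Q>\sqrt r/\varepsilon$ are also correct. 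In short, your argument is a correct and, strictly speaking, more careful version of the paper's.
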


\begin{proof}
By applying Weyl's criterion (cf. \cite[Theorem 1.6.1]{KN}), we obtain that there exists a positive integer $n$ such that
$$
n\alpha_i-\floor{n\alpha_i}<\frac{\varepsilon}{\sqrt{r}} \;\text{for each }i.
$$
Let $\alpha':=(\alpha'_1,\cdots,\alpha'_r)$ where $\alpha'_i=\frac{\floor{n\alpha_i}}{n}$.
Then we have $\norm{n\alpha-n\alpha'}<\varepsilon$, which gives us the second condition $\norm{\alpha-\alpha'}<\frac{\varepsilon}{n}$.
\end{proof}

\medskip

Now we are ready to prove the first main theorem of the paper, Theorem \ref{maintheorem1}.

\begin{proof}[Proof of Theorem \ref{maintheorem1}]
The equivalence $(1)\Leftrightarrow (2)$  follows from Theorem \ref{thrm:pklt<->J=0}.

The implication $(2)\Rightarrow (3)$ follows from the construction of $\mc{J}(X,\Delta,\norm{D})$.  Indeed, there is a general effective $\Q$-divisor $D'\sim_{\Q}D$ such that $\mc{J}(X,\Delta,\norm{D})=\mc{J}(X,\Delta+D')$ (cf. \cite[Proposition 9.2.26]{laza2}). By (2), $\mc{J}(X,\Delta+D')=\mc{O}_{X}$ and the pair $(X,\Delta+D')$ is klt.

The implication $(3)\Rightarrow (1)$ can be proved as follows.
Let $D'$ be a klt complement of $(X,\Delta,D)$. Since $(X,\Delta+D')$ is klt, there exists  $\varepsilon>0$ such that $0<\varepsilon< A_{X,\Delta+D'}(E)$ holds for any prime divisor $E$ over $X$. One can check that $A_{X,\Delta+D'}(E)\leq a(E;X,\Delta,D)$. Therefore, $\inf_E a(E;X,\Delta,D)\geq \varepsilon>0$ and this shows that $(X,\Delta,D)$ is pklt.
\end{proof}

\begin{remark}
Suppose that $(X,\Delta)$ is a klt pair. Then the triple $(X,\Delta,D)$ is weakly pklt for any nef divisor $D$. Thus, by Theorem \ref{maintheorem1} if $D$ is nef and big, then $\mc{J}(X,\Delta,\norm{D})=\mc{J}(X,\Delta)$. Note also that if $\Delta=0$, then $\mc{J}(X,\Delta,\norm{D})=\mc{J}(X,\norm{D})$. Therefore, the ideal sheaf $\mc{J}(X,\Delta,\norm{D})$ can be considered as a generalization of both the multiplier ideal sheaf $\mc{J}(X,\Delta)$ and the asymptotic multiplier ideal sheaf $\mc{J}(X,\norm{D})$.
\end{remark}

A \emph{Fano type} variety $X$ is a $\Q$-factorial normal projective variety such that $(X,\Delta)$ is klt and $-(K_X+\Delta)$ is ample for some effective divisor $\Delta$ on $X$ (\cite{PS}). It is known that Fano type varieties are Mori dream spaces by \cite{bchm}. For a pair $(X,\Delta)$ with pseudoeffective $-(K_X+\Delta)$, we will say that $(X,\Delta)$ is \emph{pklt} if the triple $(X,\Delta,-(K_X+\Delta))$ is pklt.

\begin{corollary}\label{cor: FT characterization}
Let $X$ be a $\Q$-factorial normal projective variety.
Then $X$ is a Fano type variety if and only if there exists an effective $\Q$-divisor $\Delta$ on $X$ such that $-(K_X+\Delta)$ is big and $(X,\Delta)$ is pklt.
\end{corollary}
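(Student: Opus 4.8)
The plan is to prove both implications of Corollary \ref{cor: FT characterization} by reducing to Theorem \ref{maintheorem1}, using the standard fact (\cite{bchm}, \cite{PS}) that Fano type is equivalent to the existence of a klt pair $(X,\Delta')$ with $-(K_X+\Delta')$ ample, and more flexibly, that $X$ is of Fano type if and only if there is a \emph{big} divisor $\Delta$ with $(X,\Delta)$ klt and $-(K_X+\Delta)$ big --- equivalently, if one can perturb to make it klt and anti-ample.

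\medskip

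\emph{The ``only if'' direction.} Suppose $X$ is of Fano type, so there is an effective $\Q$-divisor $\Delta_0$ with $(X,\Delta_0)$ klt and $-(K_X+\Delta_0)$ ample. Since ampleness is an open condition, for small rational $t>0$ the divisor $-(K_X+\Delta_0)-tH$ is still ample for an ample $H$; choose $\Delta := \Delta_0 + t H'$ for a general effective $H' \sim_\Q H$, so $(X,\Delta)$ remains klt and $-(K_X+\Delta) = -(K_X+\Delta_0) - tH'$ is still ample, hence big. It remains to see $(X,\Delta)$ is pklt, i.e., the triple $(X,\Delta,-(K_X+\Delta))$ is pklt. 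But $-(K_X+\Delta)$ is ample, in particular nef, so $\sigma_E(-(K_X+\Delta)) = 0$ for every prime divisor $E$ over $X$ by \cite{Nak}; hence $a(E;X,\Delta,-(K_X+\Delta)) = A_{X,\Delta}(E)$, and $\inf_E A_{X,\Delta}(E) > 0$ because $(X,\Delta)$ is klt. So the triple is pklt.

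\medskip

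\emph{The ``if'' direction.} Conversely, suppose there is an effective $\Q$-divisor $\Delta$ with $D := -(K_X+\Delta)$ big and $(X,\Delta,D)$ pklt. By Theorem \ref{maintheorem1} (applied to the big $\Q$-divisor $D$), there is an effective $\Q$-divisor $D' \sim_\Q D = -(K_X+\Delta)$ such that $(X,\Delta+D')$ is klt. Then $K_X + \Delta + D' \sim_\Q K_X + \Delta + D = 0$, so $(X,\Delta+D')$ is a klt log Calabi--Yau pair. Since $D$ is big, we may in fact write $D \sim_\Q A + E$ with $A$ ample $\Q$-divisor and $E \geq 0$; then for small rational $\delta > 0$, $(X, \Delta + D' - \delta(A+E) + \delta E')$ for a suitable general $E' \sim_\Q E$... more cleanly: choose the $\Q$-divisor $D' \sim_\Q D$ from Theorem \ref{maintheorem1}, and separately use bigness of $D$ to pick $0 < \delta \ll 1$ and an ample $\Q$-divisor $A$ with $D - \delta A \sim_\Q G$ for some effective $G \geq 0$ general enough that $(X, \Delta + (1-\delta)D' + \delta G)$ is still klt (this is possible since klt is preserved under small perturbation by general effective divisors $\Q$-linearly equivalent to a big divisor, using that $\frac{1}{1-\delta}$-scaling and Bertini-type arguments keep discrepancies positive). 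Then
\[
-(K_X + \Delta + (1-\delta)D' + \delta G) \sim_\Q -(K_X+\Delta) - (1-\delta)D - \delta(D - \delta A) = \delta^2 A,
\]
which is ample, so $X$ is of Fano type by definition. The main obstacle is the bookkeeping in this last step: one must carefully choose the auxiliary effective divisors so that the perturbed boundary stays klt \emph{and} the new anti-log-canonical divisor comes out ample; this is a routine but slightly delicate application of bigness ($D \sim_\Q$ ample $+$ effective) together with the fact that klt is an open condition under adding small multiples of general members of a big linear system, so I would verify the discrepancy inequalities explicitly rather than appeal to a black box.
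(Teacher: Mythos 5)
Your proof takes essentially the same route as the paper: both directions hinge on Theorem~\ref{maintheorem1}, and the ``if'' direction proceeds by extracting a klt complement $D'$ and observing that $(X,\Delta+D')$ is then a klt log Calabi--Yau pair with big boundary. Two small remarks. First, in the ``only if'' direction your perturbation $\Delta := \Delta_0 + tH'$ is unnecessary: the $\Delta_0$ coming from the definition of Fano type already has $-(K_X+\Delta_0)$ ample, hence big, and nef so $\sigma_E = 0$; you can use $\Delta_0$ directly, as the paper does. Second, for the final step of the ``if'' direction, the paper simply invokes the standard equivalence (from \cite{PS}, cf.\ also \cite{bchm}) that a $\Q$-factorial variety admitting a klt boundary $B$ with $K_X + B \sim_\Q 0$ and $B$ big is of Fano type; your attempt to re-derive this by a direct perturbation is in the right spirit but, as you acknowledge, needs the discrepancy bookkeeping spelled out (one must perturb on a log resolution where $B$ and the chosen effective part of the bigness decomposition have simultaneously simple normal crossing support). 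Citing the lemma from \cite{PS} is cleaner and is what the paper does implicitly with ``it is easy to see.''
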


\begin{proof}
If $X$ is a Fano type variety, then by definition there exists an effective divisor $\Delta$ on $X$ such that $(X,\Delta)$ is klt and $-(K_X+\Delta)$ is ample. Then $(X,\Delta,-(K_X+\Delta))$ is clearly a pklt triple. Conversely, if $\Delta$ is an effective $\Q$-divisor such that $(X,\Delta,-(K_X+\Delta))$ is a pklt triple and $-(K_X+\Delta)$ is big, then by Theorem \ref{maintheorem1} there exists an effective $\Q$-divisor $D'\sim_{\Q}-(K_X+\Delta)$ such that $(X,\Delta+D')$ is klt. Since $-K_X$ is big and $K_X+\Delta+D'\sim_{\Q}0$, it is easy to see that $X$ is of Fano type.
\end{proof}

\begin{remark}\label{remk:CP 4.4}
This replaces the proof of \cite[Theorem 5.1]{CP}. The proof of \cite[Theorem 5.1]{CP} depends on \cite[Proposition 4.4]{CP} whose proof contains a gap. The equality in \cite[Proposition 4.4]{CP} is proved in Theorems \ref{thrm:J_pnklt eff -K-Delta} and \ref{thrm: fg} with some additional conditions.
\end{remark}

\bigskip

Next we present an alternative and more geometric construction of the asymptotic ideal sheaf $\mc{J}(X,\Delta,\norm{D})$ associated to the triple $(X,\Delta,D)$.

Let $f:Y\to X$ be a log resolution of $(X,\Delta)$. Then we can write $K_{Y}+\Delta_{Y}=f^{\ast}(K_{X}+\Delta)$ for some divisor $\Delta_Y$ on $Y$. Denote by $N$ the negative part of the divisorial Zariski decomposition of $f^{\ast}D$. We define an ideal sheaf $\mc{J}(X,\Delta,D;f)\subseteq\mc{O}_X$ as
$$\mc{J}(X,\Delta,D;f)=f_{\ast}\mc{O}_{Y}(-\floor{\Delta_{Y}+N}). $$
Note that for a prime divisor $E$ on $Y$,
\begin{equation}\label{**}\tag{$**$}
\mult_E(\floor{\Delta_Y+N})\geq 1 \text{ if and only if } a(E;X,\Delta,D)\le 0.
\end{equation}
Note also that if $D$ is nef, then $\mc{J}(X,\Delta,D;f)$ coincides with the multiplier ideal sheaf $\mc{J}(X,\Delta)$. Since the multiplier ideal sheaf $\mc{J}(X,\Delta)$ is independent of $f$, so is $\mc{J}(X,\Delta,D;f)$. However, as we can deduce from the following monotone property, this is no longer the case if $D$ is not nef.

\begin{proposition}\label{prop:decreasing ideal}
Suppose that we have two log resolutions $f:Y\to X$, $g:W\to X$ of $(X,\Delta)$ such that $g$ dominates $f$. In other words, there exists a surjective birational morphism $h:W\to Y$ such that $g=f\circ h$. Then we have
$$\mc{J}(X,\Delta,D;g) \subseteq \mc{J}(X,\Delta,D;f). $$
\end{proposition}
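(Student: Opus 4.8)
The plan is to compare the two ideal sheaves by pulling everything back to $W$ and using the behavior of the negative part under the birational morphism $h\colon W\to Y$ described in Lemma \ref{lemma-np-behavior}. Write $K_Y+\Delta_Y=f^*(K_X+\Delta)$ and $K_W+\Delta_W=g^*(K_X+\Delta)$, and let $N_Y$ (resp. $N_W$) be the negative part of the divisorial Zariski decomposition of $f^*D$ (resp. $g^*D=h^*f^*D$). Then $\mc{J}(X,\Delta,D;f)=f_*\mc{O}_Y(-\floor{\Delta_Y+N_Y})$ and $\mc{J}(X,\Delta,D;g)=g_*\mc{O}_W(-\floor{\Delta_W+N_W})$.

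The key computation is that on $W$ we have $\floor{\Delta_W+N_W}\geq h^*\floor{\Delta_Y+N_Y}$ as divisors (more precisely, $\Delta_W+N_W - h^*(\Delta_Y+N_Y)$ is effective, and then passing to round-downs is compatible with pulling back an integral divisor). Indeed, since $g$ factors through $h$, we have $K_W=h^*K_Y+\Gamma_h$ where $\Gamma_h\geq 0$ is the relative canonical divisor of the birational morphism $h$ between smooth varieties; subtracting $g^*(K_X+\Delta)=h^*f^*(K_X+\Delta)$ from $K_W$ gives $\Delta_W=h^*\Delta_Y-\Gamma_h$. On the other hand, applying Lemma \ref{lemma-np-behavior} to $h\colon W\to Y$ and the pseudoeffective divisor $f^*D$ on $Y$, there is an effective $h$-exceptional divisor $\Gamma$ with $N_W=\np(h^*f^*D)=h^*\np(f^*D)+\Gamma=h^*N_Y+\Gamma$. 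Adding these, $\Delta_W+N_W=h^*(\Delta_Y+N_Y)+(\Gamma-\Gamma_h)$. So the whole point reduces to showing $\Gamma-\Gamma_h\geq 0$, i.e. that the extra positivity coming from the negative part of the pulled-back divisor dominates the discrepancy divisor $\Gamma_h$.

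I expect this last inequality $\Gamma\geq\Gamma_h$ to be the main obstacle, and it is not literally true divisor-by-divisor; what one actually needs is only that $\floor{\Delta_W+N_W}\geq h^*\floor{\Delta_Y+N_Y}$, which is weaker. The cleanest route is valuation-theoretic: for any prime divisor $E$ on $W$, by $(\ref{**})$ applied on $W$ we have $\mult_E\floor{\Delta_W+N_W}\geq 1$ if and only if $a(E;X,\Delta,D)\leq 0$, and since the log discrepancy $a(E;X,\Delta,D)$ depends only on the divisorial valuation of $E$ and not on the chosen model, this condition is intrinsic. Thus it suffices to check that if $h_*E=E'$ is a divisor on $Y$ (i.e. $E$ is not $h$-exceptional) then $\mult_E\floor{\Delta_W+N_W}\geq \mult_{E'}\floor{\Delta_Y+N_Y}$; and if $E$ is $h$-exceptional, that the inequality $\mult_E(-\floor{\Delta_W+N_W})\geq \mult_E(h^*(-\floor{\Delta_Y+N_Y}))$ holds, using that $h^*$ of the effective divisor $-\floor{\Delta_Y+N_Y}$ has only nonnegative coefficients along $h$-exceptional primes while $-\floor{\Delta_W+N_W}$ picks up the full integral part. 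Concretely: since $\Delta_W+N_W - h^*(\Delta_Y+N_Y)=\Gamma-\Gamma_h$ and both $\Gamma,\Gamma_h$ are $h$-exceptional, taking $h_*$ of $\mc{O}_W(-\floor{\Delta_W+N_W})$ and comparing with $f_*\mc{O}_Y(-\floor{\Delta_Y+N_Y})$ via the projection formula and $h_*\mc{O}_W=\mc{O}_Y$ reduces to the statement that $h_*\mc{O}_W(-\floor{\Delta_W+N_W})\subseteq \mc{O}_Y(-\floor{\Delta_Y+N_Y})$, which in turn follows once we know $\floor{\Delta_W+N_W}-h^*\floor{\Delta_Y+N_Y}$ is effective over the locus where $h$ is an isomorphism and that no $h$-exceptional contribution can decrease the round-down; I would verify the rounding compatibility $h^*\floor{\Delta_Y+N_Y}\leq\floor{h^*(\Delta_Y+N_Y)}=\floor{\Delta_W+N_W-(\Gamma-\Gamma_h)}$ directly, handling the $h$-exceptional part by a local coordinate argument as in the standard proof that multiplier ideals are independent of the resolution (\cite[Theorem 9.2.18]{laza2}), the only new input being Lemma \ref{lemma-np-behavior}. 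Applying $f_*$ then yields $\mc{J}(X,\Delta,D;g)\subseteq\mc{J}(X,\Delta,D;f)$.
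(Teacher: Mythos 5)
Your proposal reaches the right conclusion using the same two ingredients as the paper (Lemma~\ref{lemma-np-behavior} and the standard push-forward behavior of multiplier ideals under birational morphisms between smooth models), but the detour through $\Gamma-\Gamma_h$ is an unnecessary complication. You flag the comparison $\Gamma\geq\Gamma_h$ as "the main obstacle," and you're right that it can fail; the point is that one never needs it. The paper avoids the issue entirely by inserting the auxiliary divisor $\Delta_W+h^*N_Y$: from $N_W\geq h^*N_Y$ (Lemma~\ref{lemma-np-behavior}) one directly gets $\mc{O}_W(-\floor{\Delta_W+N_W})\subseteq\mc{O}_W(-\floor{\Delta_W+h^*N_Y})$, and since $-\floor{\Delta_W+h^*N_Y}=K_{W/Y}-\floor{h^*(\Delta_Y+N_Y)}$, the standard change-of-resolution lemma for multiplier ideals gives $h_*\mc{O}_W(-\floor{\Delta_W+h^*N_Y})=\mc{O}_Y(-\floor{\Delta_Y+N_Y})$; applying $f_*$ then finishes. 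This isolates what Lemma~\ref{lemma-np-behavior} actually buys you (the inequality $N_W\geq h^*N_Y$, nothing more) from what the classical multiplier-ideal formalism already handles (the discrepancy $K_{W/Y}$), so there is no competition between $\Gamma$ and $\Gamma_h$ to resolve. Your valuation-theoretic aside via $(\ref{**})$ is also fine but likewise more than is needed once the auxiliary divisor is introduced.
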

\begin{proof}
For some divisors $\Delta_W$ on $W$ and $\Delta_Y$ on $Y$, we have $K_Y+\Delta_Y=f^*(K_X+\Delta)$ and $K_W+\Delta_W=g^*(K_X+\Delta)$.
Let $f^*D=P+N$, $g^*D=P'+N'$ be the divisorial Zariski decompositions.
Then we have $h^*N\leq N'$ by Lemma \ref{lemma-np-behavior}. Thus $-\floor{\Delta_W+N'}\leq-\floor{\Delta_W+h^*N}$ and the following holds:
$$
h_*\mc O_W(-\floor{\Delta_W+N'})\subseteq h_*\mc O_W(-\floor{\Delta_W+h^*N}).
$$
Therefore, we have
\begin{align*}
\mc{J}(X,\Delta,D;g)&=g_*\mc O_W(-\floor{\Delta_W+N'})\\
&=f_*h_*\mc O_W(-\floor{\Delta_W+N'})\\
&\subseteq f_*\mc O_Y(-\floor{\Delta_Y+N})=\mc{J}(X,\Delta,D;f).\qedhere
\end{align*}

\end{proof}

It is easy to see that the strict inclusion $\mc{J}(X,\Delta,D;g) \subsetneq \mc{J}(X,\Delta,D;f)$ holds in general. Suppose that $(X,\Delta)$ is a klt pair such that $D$ is movable. If there exists a birational morphism $g:W\to X$ and a prime exceptional divisor $E$ on $W$ such that   $\sigma_E(D)\geq 1$, then
$$\mc{J}(X,\Delta,D;g)=g_*\mc O_Y(-\floor{\Delta_Y+N'})\subsetneq \mc{O}_X(-\floor{\Delta})=\mc{J}(X,\Delta,D;\id_{X}).$$

\bigskip

For a given triple $(X,\Delta,D)$, let us consider the family of ideal sheaves $$\mf{J}(X,\Delta,D):=\left\{\mc{J}(X,\Delta,D;f)| \text{ $f:Y\to X$ is a log resolution of $(X,\Delta)$}\right\}.$$
By Proposition \ref{prop:decreasing ideal}, the family is partially ordered by the inclusion relation. It is also easy to see that if there exists a minimal element in the family $\mf{J}(X,\Delta,D)$, then it is unique.

\begin{definition}\label{def:pnklt ideal sheaf}
Let $(X,\Delta,D)$ be a triple with a pseudoeffective divisor $D$ on $X$.
The minimal element in the family $\mf{J}(X,\Delta,D)$ of ideal sheaves is called the \emph{potentially non-klt (pnklt) ideal sheaf} of $(X,\Delta,D)$ and we denote it by $\mc{J}_{\pnklt}(X,\Delta,D)$ (provided that the minimal element exists in $\mf{J}(X,\Delta,D)$).
\end{definition}

By construction, there exist a sufficiently large positive integer $m$ and a log resolution $f:Y\to X$ as above such that $\mc{J}(X,\Delta,\norm{D})=f_*\mc O_Y(-\floor{\Delta_Y+\frac{1}{m}F_m})$ where $F_m$ is the fixed part of the linear system $f^*|mD|$ having simple normal crossing support. Since $N\leq \frac{1}{m}F_m$ where $N$ is the negative part of the divisorial Zariski decomposition of $f^*D$, we have
\begin{center}
$f_*\mc O_Y(-\floor{\Delta_Y+\frac{1}{m}F_m})\subseteq f_*\mc O_Y(-\floor{\Delta_Y+N})$
\end{center}
and this implies
\begin{equation}\tag{$\#$}\label{sharp}
\mc {J}_{\pnklt}(X,\Delta,D)\supseteq\mc{J}(X,\Delta,\norm{D})
\end{equation}
if $\mc{J}_{\pnklt}(X,\Delta,D)$ exists.

\medskip

We can also easily observe that
\begin{enumerate}[(a)]
    \item $\pnklt(X,\Delta,D)=\bigcup_{\mathcal{J}\in \mathfrak{J}(X,\Delta,D)} \mc{Z}(\mc{J})$, where $\pnklt(X,\Delta,D)$ is the union of the centerd $c_X(E)$ of prime divisors $E$ over $X$ such that $a(E,X,\Delta,D)\leq 0$ (Definition \ref{def:pnklt locus}), and
    \item if $\mc{J}_{\pnklt}(X,\Delta,D)$ exists in $\mf{J}(X,\Delta,D)$, then we see that 
    $$\pnklt(X,\Delta,D)=\mc{Z}(\mc{J}_{\pnklt}(X,\Delta,D)).$$
\end{enumerate}

\medskip

The existence of the pnklt ideal sheaf $\mc{J}_{\pnklt}(X,\Delta,D)$ is not obvious unless $D$ is big and admits a birational Zariski decomposition (Theorem \ref{thrm:J_pnklt eff -K-Delta}), or $D$ is effective and is finitely generated (Theorem \ref{thrm: fg}). Here, for a normal variety $X$ and a $\Q$-Cartier $\Q$-divisor $D$ on $X$, we say $D$ is \emph{finitely generated} if $\bigoplus_{m\ge 0}H^0(X,\mathcal{O}_X(\floor{mD}))$ is a finitely generated $\C$-algebra.

\begin{theorem}\label{thrm:J_pnklt eff -K-Delta}
Let $(X,\Delta,D)$ be a triple with a big $\Q$-divisor $D$ on $X$.
Suppose that $D$ admits a birational Zariski decomposition.
Then $\mf{J}(X,\Delta,D)$ has the minimal element $\mc{J}_{\pnklt}(X,\Delta,D)$. Furthermore, we have
$$\mathcal{J}_{\pnklt}(X,\Delta,D)=\mathcal{J}(X,\Delta,\norm{D})\;\;\text{and}$$
$$ \pnklt(X,\Delta,D)=\mc{Z}(\mc{J}_{\pnklt}(X,\Delta,D)).$$
\end{theorem}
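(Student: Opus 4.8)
The plan is to reduce the existence of the minimal element in $\mf{J}(X,\Delta,D)$ to the situation on a single resolution where the Zariski decomposition becomes nef. Concretely, since $D$ admits a birational Zariski decomposition, I would first fix a birational morphism $g_0\colon Y_0\to X$ with $Y_0$ smooth such that $\pp(g_0^*D)$ is nef, and which is simultaneously a log resolution of $(X,\Delta)$. I then claim that $\mc{J}(X,\Delta,D;g_0)$ is the minimal element of $\mf{J}(X,\Delta,D)$. By Proposition \ref{prop:decreasing ideal}, to prove minimality it suffices to check that for any log resolution $g\colon W\to X$ of $(X,\Delta)$, after passing to a common resolution $W'$ dominating both $W$ and $Y_0$ via $h\colon W'\to Y_0$ and $h'\colon W'\to W$, one has $\mc{J}(X,\Delta,D;W')=\mc{J}(X,\Delta,D;g_0)$ (read off on $Y_0$). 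The key point is Lemma \ref{lemma-np-behavior}: because $\pp(g_0^*D)$ is nef, the negative part pulls back \emph{without correction}, i.e.\ $\np((h)^*g_0^*D)=h^*\np(g_0^*D)$. Hence on $W'$ the floor of $\Delta_{W'}+N'$ is exactly the pullback-plus-discrepancy of $\floor{\Delta_{Y_0}+N_0}$, so pushing down to $Y_0$ and then to $X$ recovers $\mc{J}(X,\Delta,D;g_0)$. This gives both the existence of $\mc{J}_{\pnklt}(X,\Delta,D)$ and an explicit model computing it.

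Next I would prove the equality $\mc{J}_{\pnklt}(X,\Delta,D)=\mc{J}(X,\Delta,\norm{D})$. The inclusion $\supseteq$ is already recorded in (\ref{sharp}). For the reverse inclusion, I would work on the distinguished model $Y_0$ (or a higher resolution of the linear systems $|mD|$ over it). One takes $m$ sufficiently large and divisible so that $\mc{J}(X,\Delta,\norm{D})=f_*\mc{O}_Y(-\floor{\Delta_Y+\tfrac1m F_m})$ for a resolution $f\colon Y\to X$ of $|mD|$ dominating $Y_0$; the content is that as $m\to\infty$ the fixed part $\tfrac1m F_m$ converges (in the relevant floor-sense, divisor by divisor) to the negative part $N$ of $g_0^*D$, precisely because $\pp(g_0^*D)$ being nef forces $\sigma_E(D)=\mult_E N$ to be computed by the limit of $\tfrac1m\mult_E F_m$. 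Thus for $m$ large, $\floor{\Delta_Y+\tfrac1m F_m}=\floor{\Delta_Y+N}$ on the relevant prime divisors, giving $\mc{J}(X,\Delta,\norm{D})\supseteq \mc{J}(X,\Delta,D;g_0)=\mc{J}_{\pnklt}(X,\Delta,D)$, which together with (\ref{sharp}) yields equality. One must be a little careful that it is a coincidence of the \emph{integer parts}, not of the divisors themselves, that is needed, and that only finitely many prime divisors $E$ have $\sigma_E(D)>0$, so "$m$ large enough" can be chosen uniformly.

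Finally, the equality $\pnklt(X,\Delta,D)=\mc{Z}(\mc{J}_{\pnklt}(X,\Delta,D))$ follows by combining the general inclusion $\pnklt(X,\Delta,D)\subseteq\mc{Z}(\mc{J}(X,\Delta,\norm{D}))$ from Proposition \ref{prop:center in pnklt} with the previous paragraph's identity $\mc{J}(X,\Delta,\norm{D})=\mc{J}_{\pnklt}(X,\Delta,D)$, giving $\pnklt(X,\Delta,D)\subseteq\mc{Z}(\mc{J}_{\pnklt}(X,\Delta,D))$; and for the reverse containment I use observation (b) recorded just before the theorem, or argue directly: an irreducible component $Z$ of $\mc{Z}(\mc{J}_{\pnklt}(X,\Delta,D))=\mc{Z}(\mc{J}(X,\Delta,D;g_0))$ arises as $c_X(E)$ for a prime divisor $E$ on $Y_0$ with $\mult_E(\floor{\Delta_{Y_0}+N_0})\geq 1$, which by (\ref{**}) means $a(E;X,\Delta,D)\leq 0$, so $Z\subseteq\pnklt(X,\Delta,D)$. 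I expect the main obstacle to be the second paragraph: making rigorous the passage from the asymptotic fixed parts $\tfrac1m F_m$ to the negative part $N$ at the level of floors, i.e.\ showing that the nefness of $\pp(g_0^*D)$ is exactly what is needed to make $\sigma_E(D)$ both the asymptotic-valuation limit and the Zariski-decomposition multiplicity simultaneously on this fixed model — the rest is essentially bookkeeping with Lemma \ref{lemma-np-behavior} and Proposition \ref{prop:decreasing ideal}.
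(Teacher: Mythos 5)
Your approach for the first part of the theorem (existence of the minimal element and its identification with $\mc{J}(X,\Delta,D;g_0)$) is essentially the paper's. One imprecision worth flagging: your phrase ``the floor of $\Delta_{W'}+N'$ is exactly the pullback-plus-discrepancy of $\floor{\Delta_{Y_0}+N_0}$'' is not literally true, since floors do not commute with pull-back. What is true, and what the paper actually invokes, is the pushforward identity $h_{*}\mc{O}_{W}(K_{W/Y}-\floor{h^{*}(\Delta_{Y}+N)})=\mc{O}_{Y}(-\floor{\Delta_{Y}+N})$ from \cite[Lemma 9.2.19]{laza2}, applied to the SNC divisor $\Delta_Y+N$ on the distinguished model. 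Combined with Lemma \ref{lemma-np-behavior}, this gives $\mc{J}(X,\Delta,D;g)=\mc{J}(X,\Delta,D;g_0)$ for any $g$ dominating $g_0$, hence minimality.

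The genuine gap is in your second paragraph, and it is not just a matter of bookkeeping as you hope. You want to conclude that for $m\gg 0$ the floor of $\Delta_Y+\tfrac1m F_m$ agrees with the floor of $\Delta_Y+N_Y$ on a log resolution $Y$ of $|mD|$. But $Y$ depends on $m$, and on each $Y$ new exceptional divisors $E'$ appear for which $\tfrac1m\mult_{E'}F_m$ can be much larger than $\sigma_{E'}(D)$ at that particular level $m$; the pointwise convergence $\tfrac1m\mult_E F_m\to\sigma_E(D)$ on any fixed divisor does not give you a uniform bound over the exceptional divisors of the varying resolution. Your remark that only finitely many $E$ satisfy $\sigma_E(D)>0$ does not help, because the troublesome divisors are those with $\sigma_{E}(D)=0$ but $\tfrac1m\mult_{E}F_m$ positive enough to bump a floor up. What the inclusion $\mc{J}_{\pnklt}(X,\Delta,D)\subseteq\mc{J}(X,\Delta,\norm{D})$ really needs is the valuation-theoretic criterion for membership in the asymptotic multiplier ideal: a section $s$ with $\nu(s)>\sigma_{\nu}(D)-A_{X,\Delta}(\nu)$ for every divisorial valuation $\nu$ lies in $\mc{J}(X,\Delta,\norm{D})$. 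This is precisely \cite[Theorem 3.11]{Leh} (or \cite[Corollary 4.5]{Kim}), and the paper's proof proceeds exactly by combining this with the divisor-wise characterization of $\mc{J}_{\pnklt}(X,\Delta,D)=g_{0*}\mc{O}_{Y_0}(-\floor{\Delta_{Y_0}+N_0})$ and the identity $\nu(N)=\sigma_\nu(D)$ from Lemma \ref{lemma-np-behavior}. Attempting to reprove this criterion by hand via the floors of $\tfrac1m F_m$ is essentially reproving Lehmann's theorem, and the uniformity issue is where the direct comparison breaks down. Your third paragraph (the equality of loci) is fine once the second is repaired, and matches observation (b) before the theorem.
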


\begin{proof}
By assumption, there exists a log resolution $f:Y\to X$ of $(X,\Delta)$ such that the positive part $P=\pp(f^{\ast}D)$ is nef and $\Exc(f)\cup\Supp f_{\ast}^{-1}\Delta\cup \Supp N$ where $N=\np(f^{\ast}D)$ is the negative part is a reduced simple normal crossing divisor. Suppose that $h:W\to Y$ is a projective birational morphism such that $g=f\circ h:W\to X$ is a log resolution of $(X,\Delta)$. Then by Lemma \ref{lemma-np-behavior}, we have
$$ N'=\np(g^{\ast}D)=h^{\ast}N. $$
Let $\Delta_{W}$ and $\Delta_{Y}$ be divisors as in Proposition \ref{prop:decreasing ideal}. Applying \cite[Lemma 9.2.19]{laza2} with $D=\Delta_{Y}+N$, we have
$$ h_{\ast}\mc{O}_{W}(-\floor{\Delta_{W}+N'})=h_{\ast}\mc{O}_{W}(K_{W/Y}-\floor{h^{\ast}(\Delta_{Y}+N)})=\mc{O}_{Y}(-\floor{\Delta_{Y}+N}). $$
Hence, $\mc{J}(X,\Delta,D;f)=\mc{J}(X,\Delta,D;g)$ and this shows that $\mc{J}(X,\Delta,D;f)$ is a  minimal element in $\mf{J}(X,\Delta,D)$.

Suppose that $\mc{J}(X,\Delta,D;f')\in \mf{J}(X,\Delta,D)$ is another minimal element with $f':Y'\to X$. Let $g:W\to X$ be a log resolution of $(X,\Delta)$ which factors through $f$ and $f'$. Then by Proposition \ref{prop:decreasing ideal} and the minimality of $\mc{J}(X,\Delta,D;f)$ and $\mc{J}(X,\Delta,D;f')$, we have
$$\mc{J}(X,\Delta,D;f)=\mc{J}(X,\Delta,D;g)=\mc{J}(X,\Delta,D;f').$$
Therefore, $\mf{J}(X,\Delta,D)$ has the unique minimal element $\mc{J}_{\pnklt}(X,\Delta,D)$ and we have
$$
\mc{J}_{\pnklt}(X,\Delta,D)=\mc{J}(X,\Delta,D;f)=f_*\mc{O}_Y(-\floor{\Delta_Y+N}).
$$
Note that by (\ref{sharp}), we have
$$
\mc{J}_{\pnklt}(X,\Delta,D)\supseteq \mc{J}(X,\Delta,\norm{D}).
$$
Note that for $s\in\mc{O}_X$, $s\in\mc{J}_{\pnklt}(X,\Delta,D)$ if and only if 
$$ \nu(s)>\nu(N)-A_{X,\Delta}(\nu)\text{ for every divisorial valuation $\nu$ of $X$}$$
where $N$ is the negative part of the Zariski decomposition of $f^*D$.
Since $\nu(N)=\sigma_{\nu}(D)$, $s\in\mc{J}_{\pnklt}(X,\Delta,D)$ implies that $\nu(s)>\sigma_{\nu}(D)-A_{X,\Delta}(\nu)$.
Therefore by \cite[Theorem 3.11]{Leh} or \cite[Corollary 4.5]{Kim}\footnote{In \cite{Leh} and \cite{Kim}, $\sigma_{\nu}(D)$ is denoted by $\nu(\|D\|)$}, $s\in\mc{J}(X,\Delta,\norm{D})$ holds and we have $\mc{J}_{\pnklt}(X,\Delta,D)= \mc{J}(X,\Delta,\norm{D})$.
\end{proof}

\begin{theorem}\label{thrm: fg}
Let $(X,\Delta,D)$ be a triple with an effective $\Q$-divisor $D$ on $X$. Suppose that $D$ is finitely generated.
Then $\mf{J}(X,\Delta,D)$ has the minimal element $\mc{J}_{\pnklt}(X,\Delta,D)$. Furthermore, we have
$$\mc{J}_{\pnklt}(X,\Delta,D)=\mc{J}(X,\Delta,\norm{D})\;\;\;\text{and}$$
$$ \pnklt(X,\Delta,D)=Z(\mc{J}_{\pnklt}(X,\Delta,D)).$$
\end{theorem}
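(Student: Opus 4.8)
The plan is to mimic the proof of Theorem \ref{thrm:J_pnklt eff -K-Delta}, replacing the hypothesis that $D$ admit a birational Zariski decomposition by a consequence of finite generation: I would first use finite generation to produce a single log resolution $f\colon Y\to X$ of $(X,\Delta)$ on which the asymptotic behaviour of $|mD|$ stabilizes, and then deduce all three assertions from computations on $Y$. Concretely, pick a positive integer $p$ with $pD$ Cartier such that $\bigoplus_{m\ge 0}H^0(X,\mc{O}_X(mpD))$ is generated in degree $1$, and let $f\colon Y\to X$ be a log resolution of $(X,\Delta)$ that also resolves $\mf{b}(|pD|)$, so that $f^*|pD|=|M|+F$ with $|M|$ basepoint free and $\Exc(f)\cup\Supp f_*^{-1}\Delta\cup\Supp F$ of simple normal crossing support. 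Generation in degree $1$ gives $\mf{b}_{\ell p}\cdot\mc{O}_Y=\mc{O}_Y(-\ell F)$ for all $\ell\ge 1$ (the inclusion $\supseteq$ is $\mf{b}_{p}^{\ell}\subseteq\mf{b}_{\ell p}$; $\subseteq$ holds since a section of $\ell pD$ is a sum of products of $\ell$ sections of $pD$, each pulling back to a section vanishing along $F$), so with $K_Y+\Delta_Y=f^*(K_X+\Delta)$ one gets $\mc{J}(X,\Delta;\tfrac{1}{\ell p}\cdot\mf{b}_{\ell p})=f_*\mc{O}_Y(-\floor{\Delta_Y+\tfrac1p F})$ for every $\ell$, and since the maximal element of Lemma \ref{lem:maximal ideal} is attained along the cofinal subsequence $m=\ell p$,
$$\mc{J}(X,\Delta,\norm{D})=f_*\mc{O}_Y\bigl(-\floor{\Delta_Y+\tfrac1p F}\bigr).$$

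Next I would show that $\np(f^*D)=\tfrac1p F$, equivalently that $\pp(f^*D)=\tfrac1p M$ is nef — that is, $f$ realizes a birational Zariski decomposition of $f^*D$. The inequality $\np(f^*D)\le\tfrac1p F$ always holds; for the reverse, since $f^*(mpD)$ is Cartier the projection formula gives $H^0(Y,\mc{O}_Y(f^*(mpD)))=H^0(X,\mc{O}_X(mpD))$, so the fixed part of $|mpf^*D|$ on $Y$ is exactly $mF$, and when $D$ is big this yields $\sigma_E(f^*D)=\inf_m\tfrac{1}{mp}\,\mult_E(mF)=\tfrac1p\mult_E F$ for every prime divisor $E$ on $Y$, hence $\np(f^*D)=\tfrac1p F$. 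For a general effective finitely generated $D$ the same conclusion holds, since a finitely generated effective $\Q$-divisor admits a birational Zariski decomposition realized by the model above; I expect this to be the main obstacle of the proof, the non-big case being handled either by applying the previous computation to the big perturbations $f^*D+\varepsilon f^*A$ and letting $\varepsilon\to 0$, or by invoking the structure theory of finitely generated section rings. Granting this, $\mc{J}(X,\Delta,D;f)=f_*\mc{O}_Y(-\floor{\Delta_Y+\np(f^*D)})=f_*\mc{O}_Y(-\floor{\Delta_Y+\tfrac1p F})=\mc{J}(X,\Delta,\norm{D})$.

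Finally, for the minimality I would argue exactly as in Theorem \ref{thrm:J_pnklt eff -K-Delta}: for any log resolution $g=f\circ h\colon W\to X$ of $(X,\Delta)$ dominating $f$, with $K_W+\Delta_W=g^*(K_X+\Delta)$, the nefness of $\pp(f^*D)=\tfrac1p M$ and Lemma \ref{lemma-np-behavior} give $\np(g^*D)=h^*\np(f^*D)=\tfrac1p h^*F$, and applying \cite[Lemma 9.2.19]{laza2} to the divisor $\Delta_Y+\tfrac1p F$ on $Y$ (using $\Delta_W=h^*\Delta_Y-K_{W/Y}$) yields $h_*\mc{O}_W(-\floor{\Delta_W+\np(g^*D)})=\mc{O}_Y(-\floor{\Delta_Y+\tfrac1p F})$, hence $\mc{J}(X,\Delta,D;g)=\mc{J}(X,\Delta,D;f)$. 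Given any $f'\in\mf{J}(X,\Delta,D)$, a common log resolution $g$ dominating $f$ and $f'$ then satisfies $\mc{J}(X,\Delta,D;f)=\mc{J}(X,\Delta,D;g)\subseteq\mc{J}(X,\Delta,D;f')$ by Proposition \ref{prop:decreasing ideal}, so $\mc{J}(X,\Delta,D;f)$ is the unique minimal element $\mc{J}_{\pnklt}(X,\Delta,D)$ of $\mf{J}(X,\Delta,D)$. Together with the previous paragraph this gives $\mc{J}_{\pnklt}(X,\Delta,D)=\mc{J}(X,\Delta,\norm{D})$, and the equality $\pnklt(X,\Delta,D)=\mc{Z}(\mc{J}_{\pnklt}(X,\Delta,D))$ then follows from the observation recorded just before Theorem \ref{thrm:J_pnklt eff -K-Delta}.
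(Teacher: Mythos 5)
Your proposal follows the same route as the paper's proof: both use finite generation to produce a single log resolution $f:Y\to X$ on which the fixed parts of $f^{\ast}|mD|$ stabilize linearly and the movable parts are free, both identify $\mc{J}(X,\Delta,\norm{D})$ with $f_{\ast}\mc{O}_{Y}(-\floor{\Delta_{Y}+\tfrac1pF})$ on this model, and both then identify this with $\mc{J}(X,\Delta,D;f)$ and propagate minimality via Lemma \ref{lemma-np-behavior} and Proposition \ref{prop:decreasing ideal}. The only cosmetic difference is that the paper simply cites \cite[Proposition 4.7]{ELMNP} for the stabilized model, whereas you construct it by hand via degree-one generation of a Veronese truncation; these are the same device.

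The step you explicitly flag as the main obstacle --- showing $\np(f^{\ast}D)=\tfrac1pF$ when $D$ is effective but not big --- is indeed a genuine issue, and it is the same point at which the paper is least careful: the paper asserts $\np(f^{\ast}D)=\tfrac1d\Fix f^{\ast}|dD|$ \emph{``by definition,''} but this is not a definition. On the stabilized model $\tfrac1p\mult_{E}F$ equals the asymptotic order of vanishing $\lim_{m}\tfrac1m\ord_{E}|mD|$, while $\sigma_{E}(f^{\ast}D)$ is defined (following Nakayama) as $\lim_{\varepsilon\to 0}\sigma_{E}(f^{\ast}D+\varepsilon A)$; in general one only has $\sigma_{E}(D)\le\lim_{m}\tfrac1m\ord_{E}|mD|$, and the reverse inequality requires an argument exploiting finite generation (e.g.\ that the movable part pulls back from an ample divisor on the base of the Iitaka fibration). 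Of the two workarounds you suggest, the first --- running the big-case computation on $f^{\ast}D+\varepsilon f^{\ast}A$ and letting $\varepsilon\to 0$ --- does not obviously close the gap, since the fixed parts of $|m(D+\varepsilon A)|$ need not be controlled by $F$ on the fixed model $Y$; the second (structure theory of finitely generated section rings, i.e.\ essentially what ELMNP provides) is the right route. So: same approach as the paper, with a self-identified gap in your writeup that is real but is also glossed over in the paper's own proof.
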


\begin{proof}
Since $D$ is finitely generated, by \cite[Proposition 4.7]{ELMNP}, there exists a log resolution $f:Y\to X$ of $(X,\Delta)$ and a positive integer $d$ such that
$$\Mob (df^{\ast}D)=df^{\ast}D-\Fix f^{\ast}|dD|$$
is base point free and $\Fix f^{\ast}|kdD|=k\Fix f^{\ast}|dD|$ for any $k\ge 1$. By definition,
$$ \np(f^{\ast}D)=\tfrac{1}{d}\Fix f^{\ast}|dD|. $$
This implies that
\begin{align*}
f_{\ast}\mc{O}_{Y}(-&\floor{\Delta_{Y}+\np(f^{\ast}D})\\
&=f_{\ast}\mc{O}_{Y}(-\floor{\Delta_{Y}+\tfrac{1}{d}\Fix f^{\ast}|dD|})\\
&=\mc{J}(X,\Delta,\norm{D}).
\end{align*}
Hence, combining with Proposition \ref{prop:decreasing ideal}, the minimal element of $\mathfrak{J}(X,\Delta,D)$ is $\mathcal{J}(X,\Delta,\norm{D})$.
\end{proof}

As an application of Theorem \ref{maintheorem1},  we prove Theorem \ref{maintheorem4}.

\begin{proof}[Proof of Theorem \ref{maintheorem4}]
Let $(X,\Delta,D)$ be a pklt triple and $B$ an ample divisor on $X$ such that $K_{X}+\Delta+D+B$ is nef.  Define $\lambda:=\inf\{t\ge 0 \mid K_X+\Delta+D+tB\text{ is nef}\}$. If $\lambda=0$, then $K_X+\Delta+D$ is nef and there is nothing further to do. Assume that $\lambda>0$. For a fixed rational number $\varepsilon\in (0,\lambda)$, $K_X+\Delta+D+\varepsilon B$ is not nef and by Theorem \ref{maintheorem1}, there is an effective divisor $\Delta_{\varepsilon}\sim_{\Q}D+\varepsilon B$ such that $(X,\Delta+\Delta_{\varepsilon})$ is klt. Thus we can find a $(K_{X}+\Delta+\Delta_{\varepsilon})$-negative extremal ray $R$ in $\overline{\NE}(X)$ such that
$$ (K_{X}+\Delta+\Delta_{\varepsilon}+(\lambda-\varepsilon)B)\cdot R=0.$$
One can easily check that $(K_{X}+\Delta+D)\cdot R<0$ and $(K_{X}+\Delta+D+\lambda B)\cdot R=0$.
By the contraction theorem, there exists an extremal contraction $\varphi=\varphi_R:X\to Y$ for the ray $R$. If $\varphi$ yields a Mori fiber space, then we stop the MMP. If $\varphi$ is birational, then $\varphi$ is either a divisorial contraction or small contraction. If it is a divisorial contraction, then let $(X',\Delta',D')=(Y,\varphi_*\Delta,\varphi_*D)$. If $\varphi$ is a small contraction, then the flip $\varphi^+:X\dashrightarrow X^+/Y$ exists by \cite[Corollary 1.4.1]{bchm} and let $(X',\Delta',D')=(X^+,\varphi^+_*\Delta,\varphi^+_*D)$. In either case,  the map $X\dashrightarrow X'$ is $(K_X+\Delta+D)$-negative and $(X',\Delta',D')$ is pklt by Lemma \ref{lem:D-MMP invariant}. We can start over the process and continue running the MMP.
\end{proof}

The termination of the MMP on $(X,\Delta,D)$ is unknown.

\section{Vanishing theorems}\label{sect:4}

In this last section, we prove several vanishing theorems in regard to potential triples. We first prove the following vanishing theorem.

\begin{theorem} \label{thrm:Kodaira vanishing_semiample}
Let $X$ be a smooth projective variety and $M$ a semiample line bundle on $X$. Then
$$ H^{q}(X,\mc{O}_{X}(K_{X}+M))=0 $$
for all $q>\dim X-\kappa(M)$.
\end{theorem}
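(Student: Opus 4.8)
The plan is to prove the statement by induction on $\dim X$, reducing everything to the classical Kodaira vanishing theorem together with the Kawamata--Viehweg vanishing theorem. Since $M$ is semiample, for a sufficiently divisible $m>0$ the linear system $|mM|$ is base point free and defines a surjective morphism $\phi\colon X\to Z$ onto a projective variety $Z$ with $\dim Z=\kappa(M)$ and $mM=\phi^{\ast}A$ for some ample divisor $A$ on $Z$. If $\kappa(M)=\dim X$, then $M$ is big and nef, so the Kawamata--Viehweg vanishing theorem gives $H^{q}(X,\mc O_X(K_X+M))=0$ for all $q>0=\dim X-\kappa(M)$; this is the base step that genuinely needs external input (see the last paragraph). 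The case $\dim X=0$ is trivial, so from now on I assume $\kappa(M)<\dim X$ and that the theorem is known in dimension $\dim X-1$.

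Next I would fix a very ample divisor $H$ on $X$ and, by Bertini, choose a general member $Y\in|H|$ which is smooth and irreducible. First check that $M|_{Y}$ is again semiample with $\kappa(M|_{Y})=\kappa(M)$: semiampleness is preserved since $m(M|_{Y})=(\phi|_{Y})^{\ast}A$, and because $\dim Z=\kappa(M)\le\dim X-1=\dim Y$ every fibre of $\phi$ is positive dimensional, hence meets the ample divisor $Y$, so $\phi|_{Y}\colon Y\to Z$ stays surjective and $\kappa(M|_{Y})=\dim Z=\kappa(M)$. Since $M$ is nef and $H$ is ample, $M+H$ is ample, and by adjunction $\omega_{Y}\cong\mc O_X(K_X+H)|_{Y}$; restricting $\mc O_X(K_X+M+H)$ to $Y$ then gives the short exact sequence
$$
0\longrightarrow\mc O_X(K_X+M)\longrightarrow\mc O_X(K_X+M+H)\longrightarrow\mc O_Y(K_Y+M|_{Y})\longrightarrow 0 .
$$
Applying the classical Kodaira vanishing theorem to the ample divisor $M+H$ kills $H^{q}(X,\mc O_X(K_X+M+H))$ for all $q>0$, so the long exact sequence yields isomorphisms $H^{q}(Y,\mc O_Y(K_Y+M|_{Y}))\cong H^{q+1}(X,\mc O_X(K_X+M))$ for every $q\ge 1$.

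By the induction hypothesis applied to $Y$ and $M|_{Y}$, the left-hand side vanishes whenever $q>\dim Y-\kappa(M|_{Y})=(\dim X-1)-\kappa(M)$, i.e. whenever $q\ge \dim X-\kappa(M)$. Hence $H^{p}(X,\mc O_X(K_X+M))=0$ for every $p$ with $p\ge 2$ and $p>\dim X-\kappa(M)$. Since we are in the case $\kappa(M)<\dim X$, we have $\dim X-\kappa(M)\ge 1$, so any integer $p>\dim X-\kappa(M)$ automatically satisfies $p\ge 2$; therefore $H^{p}(X,\mc O_X(K_X+M))=0$ for all $p>\dim X-\kappa(M)$, which completes the induction.

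The one step I expect to be a real obstacle is precisely the base case $\kappa(M)=\dim X$: cutting by a hyperplane section lowers $\dim$ by one and forces $\kappa$ down to $\dim X-1$, so the inductive mechanism only ever controls $H^{p}$ for $p\ge 2$ and is structurally blind to $H^{1}$ when $\dim X-\kappa(M)=0$. This is exactly why the big-and-nef case cannot be bootstrapped from Kodaira vanishing by slicing alone and why the Kawamata--Viehweg input is essential. A secondary point to be careful about is the equality $\kappa(M|_{Y})=\kappa(M)$ for general $Y\in|H|$, which I would justify through the semiample reduction $\phi$ as above (surjectivity of $\phi|_{Y}$ and $\kappa$ of a pullback of an ample class under a surjection) rather than via a generic-restriction estimate; once $\dim X-\kappa(M)\le \dim Y=\dim X-1$ this is clean, which is exactly the regime in which the inductive step is applied.
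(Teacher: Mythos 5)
Your proof is correct, and it takes a genuinely different route from the paper. The paper works directly with the semiample reduction $\mu:X\to Z$ with $\mu_*\mc O_X=\mc O_Z$ and $M=\mu^*A$: it applies Serre vanishing on $Z$, degenerates the Leray spectral sequence, invokes Koll\'ar's theorem that $R^q\mu_*\mc O_X(K_X)=0$ for $q>\dim X-\dim Z$, and then descends from $mM$ to $M$ via Koll\'ar's injectivity theorem. You instead run an induction on $\dim X$ by slicing with a general very ample hyperplane $Y$, using the short exact sequence for $\mc O_X(K_X+M)\hookrightarrow\mc O_X(K_X+M+H)$, Kodaira vanishing for the ample twist, and the inductive hypothesis on $(Y,M|_Y)$; the degree shift in the long exact sequence exactly matches the drop in $\dim-\kappa$, so the bound propagates, and the big-and-nef base case is supplied by Kawamata--Viehweg. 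What your approach buys is a more elementary input (only Kodaira and Kawamata--Viehweg, no Koll\'ar package), at the cost of having to verify $\kappa(M|_Y)=\kappa(M)$ for a general slice, which you handle correctly via surjectivity of $\phi|_Y$ and the identity $\kappa=\nu$ for the semiample pullback of an ample class. Two small points worth recording explicitly: the case $\kappa(M)=0$ is trivial (then $q>\dim X$), which in particular disposes of the $\dim X=1$, $\kappa(M)=0$ situation where Bertini irreducibility would not apply to a hyperplane section; and your observation that the slicing argument is structurally blind to $H^1$ when $\dim X=\kappa(M)$ is exactly right and correctly identifies Kawamata--Viehweg as an irreducible input for the base case.
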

\begin{proof}
Since $M$ is semiample, there exists a surjective projective morphism $\mu:X\to Z$ such that $\dim Z=\kappa(M)$, $\mu_{\ast}\mc{O}_{X}=\mc{O}_{Z}$, and $M=\mu^{\ast}A$, where $A$ is an ample $\Q$-divisor on $Z$ (cf. \cite[Theorem 2.1.27]{laza1}). For some sufficiently large integer $m$, we have
$$ H^{q}(Z, R^{p}\mu_{\ast}\mc{O}_{X}(K_{X})\otimes \mc{O}_{Z}(mA))=0 $$
for all $p\ge 0$ and $q>0$. Thus, using the Leray spectral sequence, we obtain
$$ H^{q}(X,\mc{O}_{X}(K_{X}+mM))=H^{0}(Z,R^{q}\mu_{\ast}\mc{O}_{X}(K_{X})\otimes \mc{O}_{Z}(mA)) $$
for all $q>0$. Since $\dim Z=\kappa(M)$, $R^{q}\mu_{\ast}\mc{O}_{X}(K_{X})=0$ holds for all $q>\dim X-\kappa(M)$ (\cite[Theorem 2.1]{Kol1}). Hence,
$$ H^{q}(X,\mc{O}_{X}(K_{X}+mM))=0 $$
for $q>\dim X-\kappa(M)$. Now, the desired result follows from the Koll\'ar's injectivity theorem (\cite[Theorem 2.2]{Kol1}).
\end{proof}

We also obtain the following immediate generalization of Kawamata-Viehweg vanishing theorem.

\begin{theorem}\label{thrm:Kodaira vanishing_semiample2}
Let $X$ be a smooth projective variety and $L$ a line bundle on $X$. Assume that $L\sim_{\Q}\Delta+M$, where $M$ is a semiample divisor and $\Delta=\sum a_{i}\Delta_{i}$ is a $\Q$-divisor with simple normal crossing support and fractional coefficients $0\le a_{i}<1$ for all $i$. Then
$$ H^{q}(X,\mc{O}_{X}(K_{X}+L))=0 $$
for all $q>\dim X-\kappa(M)$.
\end{theorem}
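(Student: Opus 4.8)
The plan is to reduce Theorem \ref{thrm:Kodaira vanishing_semiample2} to the already-proven Theorem \ref{thrm:Kodaira vanishing_semiample} by absorbing the fractional boundary $\Delta$ into a cyclic covering or, more simply, via the standard multiplier-ideal / covering trick adapted to keep track of the fibration given by $M$. First I would observe that since $0\le a_i<1$ and $\Supp\Delta$ is simple normal crossing, the pair $(X,\Delta)$ is klt, and $L-\Delta\sim_{\Q}M$ is semiample. The key point is that $\kappa(M)$ governs the vanishing range, so whatever auxiliary birational modification or covering I introduce, I must ensure the pullback of $M$ retains its semiampleness and its Iitaka dimension.

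Concretely, here are the steps in order. Step one: choose a positive integer $m$ clearing all denominators, so that $mL-m\Delta\sim mM$ with $mM$ semiample and $m\Delta$ an integral divisor; then build the cyclic cover $\pi\colon X'\to X$ branched along a general member of a suitable free linear system $|m M + m\Delta|$-type construction (the standard Esnault–Viehweg covering associated to the relation $L\sim_\Q \Delta + M$). Step two: check that $X'$ can be taken smooth (after resolving, using that $\Delta$ is snc, so the cover has only the expected quotient singularities resolvable without disturbing the relevant cohomology), that $\pi_*\mathcal{O}_{X'}(K_{X'})$ decomposes as a direct sum $\bigoplus_j \mathcal{O}_X(K_X + L_j)$ where one summand is exactly $\mathcal{O}_X(K_X+L)$ (here one uses the fractional parts $\{j a_i\}$ and the floors, exactly as in the proof of Kawamata–Viehweg), and crucially that $M' := \pi^* M$ is still semiample with $\kappa(M') = \kappa(M)$ since $\pi$ is finite. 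Step three: apply Theorem \ref{thrm:Kodaira vanishing_semiample} to $X'$ and the semiample line bundle $M'$ (after twisting appropriately so that $K_{X'} + (\text{semiample})$ appears — here one needs $L' := $ the relevant summand's class to itself be $\Q$-linearly equivalent to a semiample divisor on $X'$, which follows because on $X'$ the branch divisor becomes divisible, killing the fractional part). Step four: since $H^q(X', \mathcal{O}_{X'}(K_{X'}+M')) = 0$ for $q > \dim X' - \kappa(M') = \dim X - \kappa(M)$ and this group contains $H^q(X, \mathcal{O}_X(K_X+L))$ as a direct summand (via $R\pi_* = \pi_*$ for the finite morphism $\pi$ together with the splitting of $\pi_*\omega_{X'}$), the desired vanishing follows.

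An alternative route, possibly cleaner, avoids covers entirely: pass to a log resolution $g\colon \tilde X \to X$ of $(X,\Delta)$ and of the map defined by $M$, write $K_{\tilde X} = g^*(K_X+\Delta) + \sum b_E E$ with $b_E > -1$ (klt), so that $\lceil -g^*\Delta + \text{(exceptional)} \rceil$ is effective and $g$-exceptional-supported, and then run the argument of Theorem \ref{thrm:Kodaira vanishing_semiample}'s proof directly on $\tilde X$: the pushforward $R^q g_*$ of $\mathcal{O}_{\tilde X}(K_{\tilde X} + \lceil g^* L - g^*\Delta \rceil)$ together with a Kollár-type vanishing for the composition $\tilde X \to X \to Z$ (where $Z$ is the Iitaka fibration target of $M$, $\dim Z = \kappa(M)$) yields the result; one feeds the snc fractional divisor into Kollár's package via the generic vanishing / torsion-freeness statements for $g_*\omega_{\tilde X}(\text{boundary})$. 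I would likely present whichever of the two is shorter given the machinery already cited in the paper (the Leray spectral sequence and \cite[Theorem 2.1, 2.2]{Kol1} are already in play, which tilts me toward the second approach).

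The main obstacle I anticipate is bookkeeping the interaction between the fractional part $\Delta$ and the Iitaka fibration $\mu\colon X\to Z$ of $M$: in the proof of Theorem \ref{thrm:Kodaira vanishing_semiample} one uses $M = \mu^* A$ with $A$ ample on $Z$ and then invokes Kollár's $R^q\mu_*\omega_X = 0$ for $q > \dim X - \dim Z$ and Kollár's injectivity. To run the same with an extra $\Delta$, I need the corresponding statements for $R^q\mu_*(\omega_X \otimes \mathcal{O}_X(\lceil -\Delta \rceil \text{ or a twist}))$ — i.e., Kollár-type vanishing and injectivity for the pair $(X,\Delta)$ relative to $\mu$ — which is where the snc hypothesis on $\Delta$ and the klt (fractional coefficient) hypothesis are essential, and where the argument must be assembled carefully rather than quoted verbatim. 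If the cited version of Kollár's theorems is only stated for $\omega_X$ and not for $\omega_X$ twisted by a boundary, then the covering trick becomes the safer path, and the obstacle shifts to verifying that the cyclic cover preserves $\kappa(M)$ and produces the correct direct summand — both of which are standard but need to be stated precisely.
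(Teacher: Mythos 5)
Your first route --- reducing to Theorem \ref{thrm:Kodaira vanishing_semiample} via the cyclic (Kawamata) covering trick --- is exactly what the paper does; in fact the paper's proof is a single sentence that invokes ``the cyclic covering trick which is used to prove the Kawamata--Viehweg vanishing theorem'' and then cites Theorem \ref{thrm:Kodaira vanishing_semiample}, so your Steps 1--4 are a faithful expansion of it. Your inclination to prefer the second (log resolution plus relative Koll\'ar) route, on the grounds that the Leray spectral sequence and \cite[Theorems 2.1, 2.2]{Kol1} are already in play, should be reversed: as you yourself flagged at the end, the cited results in \cite{Kol1} are stated for $\omega_X$ without a boundary twist, so running the argument of Theorem \ref{thrm:Kodaira vanishing_semiample} with the extra fractional $\Delta$ would force you to re-derive (or separately cite) boundary-twisted torsion-freeness and injectivity, which are themselves usually proved by covers --- i.e.\ the second route does not actually avoid the covering machinery, it just relocates it. The covering-trick route is therefore the shorter and safer one given the paper's toolkit, and it is the one the paper takes. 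Two small precision points on your Step 1: the cover should be taken using the Kawamata covering lemma applied to the snc divisor $\Supp\Delta$ (so that $\pi^*\Delta$ becomes integral and $\pi^*L-\pi^*\Delta$ is an honest integral semiample class $\Q$-linearly equivalent to $\pi^*M$), not branched along a member of a free linear system of $|mM+m\Delta|$-type; and one should note explicitly that $\kappa$ is preserved under pullback by a finite surjective morphism, which you did.
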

\begin{proof}
Using the cyclic covering trick which is used to prove the Kawamata-Viehwheg vanishing theorem, we can apply Theorem \ref{thrm:Kodaira vanishing_semiample} to obtain the vanishing of cohomology.
\end{proof}

Note that in Theorem \ref{thrm:Kodaira vanishing_semiample2}, if $M$ is additionally big, then the triple $(X,\Delta,M)$ is pklt, or equivalently, $\mc{J}(X,\Delta,\norm{M})=\mc O_X$ by Theorem \ref{maintheorem1}. Theorem \ref{maintheorem3} treats the case where $\mc{J}(X,\Delta,\norm{D})\neq \mc O_X$. It can be considered as a generalization of the Nadel vanishing theorem.

\begin{proof}[Proof of Theorem \ref{maintheorem3}]
By construction, $\mc{J}(X,\Delta,\norm{D})=\mc{J}(X,\Delta;\frac{1}{m}\mf{b}_{m})$ for some sufficiently large positive integer $m$ where $\mf{b}_{m}$ is the base ideal of $|mD|$. For a common log resolution $f:Y\to X$ of both $(X,\Delta)$ and $\mf{b}_{m}$, we can write $K_{Y}+\Delta_{Y}=f^{\ast}(K_{X}+\Delta)$ for some divisor $\Delta_Y$ on $Y$ and $f^{\ast}|mD|=|M_{m}|+F_{m}$ where $|M_m|$ is free and $F_m$ is fixed part. Then, by construction, $\Supp\Delta_{Y}\cup \Supp F_{m}$ is a simple normal crossing divisor. Since $f^{\ast}L-\floor{\Delta_{Y}+\tfrac{1}{m}F_{m}}\sim_{\Q}K_{Y}+\frac{1}{m}M_{m}+\{\Delta_{Y}+\frac{1}{m}F_{m}\}$, Theorem \ref{thrm:Kodaira vanishing_semiample2} implies
$$ H^{q}(Y,\mc{O}_{Y}(f^{\ast}L-\floor{\Delta_{Y}+\tfrac{1}{m}F_{m}}))=0 $$
for $q>\dim X-\kappa(D)$. Therefore, using the Leray spectral sequence, we have
$$ H^{q}(X,\mc{O}_{X}(L)\otimes \mc{J}(X,\Delta,\norm{D}))=H^{q}(Y,\mc{O}_{Y}(f^{\ast}L-\floor{\Delta_{Y}+\tfrac{1}{m}F_{m}})=0 $$
for $q>\dim X-\kappa(D)$.
\end{proof}

\begin{corollary}\label{cor:main cor on vanishing}
Let $(X,\Delta,D)$ be a triple with a  big $\Q$-divisor $D$ on $X$. Suppose that $(X,\Delta,D)$ is pklt and $L$ a line bundle on $X$ such that $L\sim_{\Q}K_{X}+\Delta+D$. Then,
$$ H^{q}(X,\mc{O}_{X}(L))=0 $$
for any $q>0$.
\end{corollary}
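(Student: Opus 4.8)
The plan is to deduce this directly from the Nadel-type vanishing of Theorem \ref{maintheorem3} together with the characterization of pklt triples in Theorem \ref{maintheorem1}. The key observation is that since $D$ is big, its Iitaka dimension satisfies $\kappa(D)=\dim X$, so the numerical bound $q>\dim X-\kappa(D)$ appearing in Theorem \ref{maintheorem3} reads simply $q>0$; it therefore remains only to arrange that the multiplier ideal twisting $\mc O_X(L)$ in that theorem is the full structure sheaf.

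First I would pass to an effective representative of $D$, since Theorem \ref{maintheorem3} is stated for triples with effective $\Q$-divisors. Because $(X,\Delta,D)$ is pklt with $D$ big, Theorem \ref{maintheorem1} produces an effective $\Q$-divisor $D'\sim_{\Q}D$ with $(X,\Delta+D')$ klt; replacing $D$ by $D'$ leaves the $\Q$-linear equivalence class unchanged, hence also the asymptotic valuations $\sigma_E$, the log discrepancies $a(E;X,\Delta,\,\cdot\,)$, the asymptotic multiplier ideal sheaf, and the Iitaka dimension, and we still have $L\sim_{\Q}K_X+\Delta+D'$ with $\kappa(D')=\dim X$. Thus we may assume from the start that $D$ is effective and $(X,\Delta,D)$ is pklt.

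Next, applying the equivalence $(1)\Leftrightarrow(2)$ of Theorem \ref{maintheorem1} to the pklt triple $(X,\Delta,D)$ with big $D$ gives $\mc J(X,\Delta,\norm{D})=\mc O_X$, so $\mc O_X(L)\otimes\mc J(X,\Delta,\norm{D})\cong\mc O_X(L)$. Feeding the Cartier divisor $L\sim_{\Q}K_X+\Delta+D$ into Theorem \ref{maintheorem3} then yields $H^q(X,\mc O_X(L))=0$ for all $q>\dim X-\kappa(D)=0$, which is precisely the claim.

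I do not anticipate any real obstacle: the mathematical content is entirely carried by Theorems \ref{maintheorem1} and \ref{maintheorem3}, and the only point needing care is the innocuous reduction to an effective $D'$ and the accompanying remark that $\mc J(X,\Delta,\norm{\,\cdot\,})$, the invariants $a(E;X,\Delta,\,\cdot\,)$, and $\kappa(\,\cdot\,)$ are all insensitive to replacing $D$ by a $\Q$-linearly equivalent divisor.
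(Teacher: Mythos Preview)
Your proof is correct and is exactly the intended approach: the corollary is an immediate consequence of Theorem \ref{maintheorem3} (using $\kappa(D)=\dim X$ for big $D$) together with the identity $\mc J(X,\Delta,\norm{D})=\mc O_X$ from Theorem \ref{maintheorem1}. The reduction to an effective representative $D'\sim_{\Q}D$ to meet the hypotheses of Theorem \ref{maintheorem3} is the right way to handle the technical point, and your remark that the relevant invariants depend only on the $\Q$-linear equivalence class is accurate.
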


Note that Corollary \ref{cor:main cor on vanishing} is a generalization of the Kawamata-Viehweg vanishing theorem and Theorem \ref{maintheorem3} is also a generalization of the main result of \cite{Wu} to the case of pairs.

\bigskip

\begin{corollary}
Let $(X,\Delta+M)$ be a generalized klt pair and $L$ a Cartier divisor on $X$ such that $L-(K_{X}+\Delta+M)$ is nef and big. Assume that $M$ is $\R$-Cartier. Then
$$ H^{q}(X,\mc{O}_{X}(L))=0 $$
for all $q>0$.
\end{corollary}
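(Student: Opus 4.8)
The goal is to deduce this corollary from Theorem~\ref{maintheorem3} (equivalently from Corollary~\ref{cor:main cor on vanishing}) by recognizing the generalized klt pair $(X,\Delta+M)$ as a pklt triple. First I would set $D := L - (K_X+\Delta+M)$, which by hypothesis is a nef and big $\Q$-divisor (after possibly passing to a $\Q$-linear equivalence, since $L$ is Cartier and $L-(K_X+\Delta+M)$ is nef and big; note $M$ is $\R$-Cartier so $K_X+\Delta+M$ is $\R$-Cartier and one can arrange everything to be $\Q$). Then $L \sim_\Q K_X + \Delta + (M + D)$, and I want to apply Theorem~\ref{maintheorem3} to the triple $(X,\Delta, M+D)$, for which I must check that $M+D$ is a pseudoeffective (in fact big) $\Q$-Cartier divisor — immediate since $D$ is big and $M$ is nef — and that the multiplier ideal $\mc{J}(X,\Delta,\norm{M+D}) = \mc{O}_X$, so that the cohomology group in Theorem~\ref{maintheorem3} is simply $H^q(X,\mc{O}_X(L))$, and that $\kappa(M+D) = \dim X$ so the vanishing holds for all $q > 0$.

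The second and third points both follow once I show the triple $(X,\Delta,M+D)$ is pklt. Here I would invoke Remark~\ref{remark: triple more general than gpair}: a generalized klt pair $(X,B+M')$ with $\R$-Cartier nef part gives rise to a pklt triple $(X,B,M')$, via the inequality $a(E;X,B,M') \ge 1 - \mult_E(B')$ where $K_{X'}+B'+M'=f^*(K_X+B+M')$ and $\mult_E(B') < 1$ by generalized kltness. In our situation the relevant generalized pair is $(X, \Delta + (M+D))$: since $(X,\Delta+M)$ is generalized klt and $D$ is big, one can choose an effective $\Q$-divisor $D' \sim_\Q D$ general enough that $(X, \Delta + (M + D'))$ remains generalized klt (perturbing by a small general ample-type divisor and absorbing it, exactly as in the standard argument that kltness is preserved under adding a small general effective divisor $\Q$-linearly equivalent to a big one). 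Hence $(X, \Delta, M+D)$ is pklt, so by Theorem~\ref{maintheorem1} (or directly Theorem~\ref{thrm:pklt<->J=0}, since $M+D$ is big) we get $\mc{J}(X,\Delta,\norm{M+D}) = \mc{O}_X$; and since $M+D$ is big, $\kappa(M+D) = \dim X$.

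With these facts in hand the conclusion is immediate: Theorem~\ref{maintheorem3} applied to $(X,\Delta,M+D)$ and the Cartier divisor $L \sim_\Q K_X + \Delta + (M+D)$ gives $H^q(X,\mc{O}_X(L)\otimes \mc{J}(X,\Delta,\norm{M+D})) = 0$ for all $q > \dim X - \kappa(M+D) = 0$, and since the multiplier ideal is trivial this is $H^q(X,\mc{O}_X(L)) = 0$ for all $q > 0$. Alternatively, one can quote Corollary~\ref{cor:main cor on vanishing} directly once pkltness of $(X,\Delta,M+D)$ and $L \sim_\Q K_X+\Delta+(M+D)$ are established.

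**Main obstacle.** The only non-formal point is the reduction from the $\R$-Cartier/$\R$-linear-equivalence setting to a $\Q$-setting compatible with the hypotheses of Theorem~\ref{maintheorem3} (which are stated for $\Q$-divisors), together with the verification that a general $\Q$-divisor $D' \sim_\Q D$ can be added to the generalized klt pair without destroying generalized kltness — this is where one must be slightly careful, since $M$ is only assumed $\R$-Cartier; but since $L$ is Cartier and $L - (K_X+\Delta+M)$ is nef and big, in practice $K_X+\Delta+M$ is $\Q$-Cartier and the perturbation argument is the routine one. Everything else is a direct citation of the results already proved.
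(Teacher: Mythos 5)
Your overall strategy matches the paper's: rewrite $L \sim_\Q K_X + \Delta + (M + D)$ with $D = L-(K_X+\Delta+M)$, show the triple $(X,\Delta,M+D)$ is pklt with $M+D$ big, and invoke Corollary~\ref{cor:main cor on vanishing} (equivalently Theorem~\ref{maintheorem3} with trivial multiplier ideal). The difference is in how pklt-ness of the triple is established. The paper takes the shortest route: by Remark~\ref{remark: triple more general than gpair}, the generalized klt pair $(X,\Delta+M)$ already gives a pklt triple $(X,\Delta,M)$; then, since $D$ is nef, one has $\sigma_E(M+D)\le\sigma_E(M)+\sigma_E(D)=\sigma_E(M)$ for every prime divisor $E$ over $X$, so $a(E;X,\Delta,M+D)\ge a(E;X,\Delta,M)$, and $(X,\Delta,M+D)$ is pklt with no further work. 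You instead perturb $D$ to a general effective $D'\sim_\Q D$, absorb $D'$ into the boundary of the generalized pair, and deduce pklt-ness of $(X,\Delta,M+D)$ from generalized klt-ness of $(X,(\Delta+D')+M)$. This works, but (i) it needs a Bertini/Kodaira's-lemma argument that the paper's route avoids entirely, and (ii) the inference from ``$(X,(\Delta+D')+M)$ generalized klt'' to ``$(X,\Delta,M+D)$ pklt'' is not a one-step application of Remark~\ref{remark: triple more general than gpair} (which would only give pklt-ness of $(X,\Delta+D',M)$): you still need the comparison $a(E;X,\Delta,M+D)\ge a(E;X,\Delta+D',M)$, coming from $A_{X,\Delta}(E)=A_{X,\Delta+D'}(E)+\mult_E f^*D'$ together with $\sigma_E(M+D)\le\sigma_E(M)+\mult_E f^*D'$. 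Alternatively, a still simpler route you hint at but don't quite carry through: $(X,\Delta+(M+D))$ with nef part $M'+f^*D$ on a resolution is already generalized klt (the boundary $B'$ on the resolution is unchanged when you enlarge the nef part by $f^*D$), so Remark~\ref{remark: triple more general than gpair} applies directly, again with no perturbation. Your flag about the $\R$- versus $\Q$-coefficient mismatch in the hypotheses of Theorem~\ref{maintheorem3} is a fair observation; the paper silently passes over it as well.
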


\begin{proof}
As we have seen in Section 3, the triple $(X,\Delta,M)$ is pklt. Let $A:=L-(K_{X}+\Delta+M)$. Since $A$ is nef, the triple $(X,\Delta,M+A)$ is also pklt. Furthermore, $M+A$ is big. Hence we can apply Corollary \ref{cor:main cor on vanishing} to the potential triple $(X,\Delta,M+A)$ and conclude that
$$ H^{q}(X,\mc{O}_{X}(L))=0 $$
for all $q>0$.
\end{proof}

\begin{remark}\label{remark: potential is general}
Corollary \ref{cor:main cor on vanishing} can be considered as a generalization of the main result of \cite{CLX} for the case of klt generalized pairs. The main result of \cite{CLX} in the generalized klt case implies that if $(X,\Delta+M_X)$ is a generalized klt pair and $L$ is a Cartier divisor such that $B:=L-(K_X+\Delta+M_X)$ is nef and big, then the vanishing $H^i(X,\mathcal{O}_X(L))=0$ holds for any $i>0$.  Under the same conditions, $M_X+B$ is big and the potential triple $(X,\Delta,M_X+B)$ is pklt. Thus the same vanishing can be obtained by applying Corollary \ref{cor:main cor on vanishing}.
\end{remark}


\end{document}